\documentclass[10pt,a4paper,openany,oneside]{article}

\usepackage{amsfonts}
\usepackage{fancyhdr}
\usepackage{verbatim}
\usepackage{graphicx,color}
\usepackage{authblk}
\usepackage{mathscinet}

\usepackage{times}
\usepackage[latin1]{inputenc}
\usepackage[english]{babel}
\usepackage{mathrsfs}
\usepackage{stmaryrd}
\usepackage{amssymb,amsmath,amsthm}
\usepackage{array}
\usepackage{hyperref}
\usepackage{graphicx}
\usepackage{subcaption}
\usepackage{eurosym}
\usepackage{cancel}

\usepackage{filecontents}

\usepackage{amssymb,amsfonts}
\usepackage{xcolor,dsfont}

\numberwithin{equation}{section}

\newtheorem{theorem}{Theorem}[section]
\newtheorem*{theorem*}{Theorem}
\newtheorem{thm}[theorem]{Theorem}

\newtheorem{lem}[theorem]{Lemma}
\newtheorem{proposition}[theorem]{Proposition}

\newtheorem{corollary}[theorem]{Corollary}

\newtheorem{definition}[theorem]{Definition}

\newtheorem{example}[theorem]{Example}
\newtheorem{remark}[theorem]{Remark}

\newtheorem{innercustomthm}{Theorem}
\newenvironment{customthm}[1]
{\renewcommand\theinnercustomthm{#1}\innercustomthm}
{\endinnercustomthm}

\newcommand{\be}{\begin{equation}}
\newcommand{\ee}{\end{equation}}
\newcommand{\beq}{\begin{equation*}}
\newcommand{\eeq}{\end{equation*}}
\newcommand{\enq}{\end{equation}}
\newcommand{\ben}{\begin{eqnarray}}
\newcommand{\een}{\end{eqnarray}}
\newcommand{\bea}{\begin{eqnarray*}}
\newcommand{\eea}{\end{eqnarray*}}

\newcommand{\cF}{ {\mathcal{F}}}

\newcommand{\cD}{ {\mathcal{D}}}
\newcommand{\cL}{ {\mathcal{L}}}

\newcommand{\Sc}{ {\mathcal{S}}}

\newcommand{\cT}{ {\mathcal{T}}}
\newcommand{\cR}{ {\mathcal{R}}}
\newcommand{\cN}{ {\mathcal{N}}}
\newcommand{\cM}{ {\mathcal{M}}}
\newcommand{\cU}{ {\mathcal{U}}}

\newcommand{\sign}{\mbox{\rm sign}}

\newcommand{\dist}{\mbox{\rm dist}}

\def\ker{{\mathrm{ker\,}}}
\def\Ran{{\mathrm{Ran\,}}}

\newcommand{\R}{{\mathbb{R}}}
\newcommand{\N}{{\mathbb{N}}}
\newcommand{\C}{{\mathbb{C}}}
\newcommand{\Z}{{\mathbb{Z}}}

\newcommand{\eps}{\varepsilon}

\newcommand{\supp}{\mbox{\rm supp}}

\def\ri{{\rm i}}

\DeclareMathOperator\perm{\boldsymbol{\epsilon}}

 \def\dd{\, {\rm d}}
\newcommand{\pa}{\partial}

\DeclareMathOperator{\Curl}{curl}

\DeclareMathOperator\Hdiv{H_{div}}
\DeclareMathOperator\Hcurl{H_{curl}}
\DeclareMathOperator\tr{Tr}

\def\Real{\mathop{\rm Re}}
\def\Imag{\mathop{\rm Im}}
\def\c{\mathord{\rm const}}

\DeclareMathOperator\sweyl{\sigma_{Weyl}}

\newcommand{\bspm}{\left(\begin{smallmatrix}}\newcommand{\espm}{\end{smallmatrix}\right)}
\newcommand{\bpm}{\begin{pmatrix}}\newcommand{\epm}{\end{pmatrix}}
\def\blem{\begin{lem}}\def\elem{\end{lem}}
\def\bthm{\begin{thm}}\def\ethm{\end{thm}}
\def\bprop{\begin{proposition}}\def\eprop{\end{proposition}}
\def\bcor{\begin{corollary}}\def\ecor{\end{corollary}}
\def\beq{\begin{equation}}\def\eeq{\end{equation}}
\def\beqq{\begin{equation*}}\def\eeqq{\end{equation*}}
\def\bal{\begin{align}}\def\eal{\end{align}}
\def\bpf{\begin{proof}}\def\epf{\end{proof}}
\def\bex{\begin{example}}\def\eex{\end{example}}
\def\brem{\begin{remark}}\def\erem{\end{remark}}

\newcommand{\curl}[1]{\nabla \times #1 }

\newcommand{\ba}{\begin{aligned}}
\newcommand{\ea}{\end{aligned}}

\def\C{\mathbb C}
\def\R{\mathbb R}
\def\N{\mathbb N}
\def\Z{\mathbb Z}

\DeclareFontFamily{U}{mathx}{\hyphenchar\font45}
\DeclareFontShape{U}{mathx}{m}{n}{
<5> <6> <7> <8> <9> <10>
<10.95> <12> <14.4> <17.28> <20.74> <24.88>
mathx10
}{}
\DeclareSymbolFont{mathx}{U}{mathx}{m}{n}
\DeclareFontSubstitution{U}{mathx}{m}{n}
\DeclareMathAccent{\widecheck}{0}{mathx}{"71}

\newcolumntype{D}[1]{>{\displaystyle} #1}

\makeatletter
\def\@fnsymbol#1{\ensuremath{\ifcase#1\or \dagger\or \ddagger\or
\mathsection\or \mathparagraph\or \|\or **\or \dagger\dagger
\or \ddagger\ddagger \else\@ctrerr\fi}}
\makeatother

\newcommand{\Addresses}{{
\bigskip
\footnotesize

T.~Dohnal, \textsc{Martin Luther University Halle-Wittenberg, Institute of Mathematics, Theodor Lieser Str.5, 06120 Halle, Germany}\par\nopagebreak
\textit{E-mail address}, T.~Dohnal: \texttt{tomas.dohnal@mathematik.uni-halle.de}

\medskip

M.~Plum, \textsc{Karlsruher Institut f\"ur Technologie, Institute for Analysis, Englerstrasse 2, 76131 Karlsruhe, Germany}\par\nopagebreak
\textit{E-mail address}, M.~Plum: \texttt{michael.plum@kit.edu}
\medskip

K.M.~Schmidt, \textsc{School of Mathematics, Cardiff University, Senghennydd Road, Cardiff CF24 4AG, UK}\par\nopagebreak
\textit{E-mail address}, K.M.~Schmidt: \texttt{schmidtkm@cardiff.ac.uk}
\medskip

I.~Wood, \textsc{School of Engineering, Mathematics and Physics, Cornwallis South, University of Kent, Canterbury, CT2 7NF, UK}\par\nopagebreak
\textit{E-mail address}, I.~Wood: \texttt{i.wood@kent.ac.uk}
}}

\begin{document}
\title {Spectrum of the Maxwell Equations for a Flat Interface between Non-Homogeneous Dispersive Media in 2D and 3D}

\author{Tom\'a\v{s} Dohnal, Michael Plum, Karl M. Schmidt, and Ian Wood}

\maketitle
\Addresses


\abstract{The present work concerns the time-harmonic Maxwell equations in two- and three-dimensional space, divided into two half-spaces by a flat interface. The two half-spaces are filled with media whose electric permittivity is frequency-dependent and varies as a function of the distance from the interface only; this dependence is assumed to satisfy a regularity condition in each half-space but may be discontinuous at the interface. No specific model for the frequency dependence is assumed. For the associated operator pencil, we characterise subsets of the resolvent set and separate subsets of the Weyl spectrum corresponding to radiation away from the interface and along the interface, respectively. The characterisation of the sets is via conditions on the fundamental solutions.
If the media are periodic in the direction orthogonal to the interface, a more explicit description of these sets can be given in terms of Floquet theory of related Sturm-Liouville equations.
These results generalise earlier work in one and two dimensions where the media were assumed to be homogeneous in each half-space.
}

\medskip
\noindent
\textbf{Keywords}: Maxwell equations, operator pencils, spectrum, Weyl sequences, dispersive material, interface, inhomogeneous media

\section{Introduction}

We study the spectrum of an operator pencil corresponding to the time-harmonic Maxwell problem in $\R^2$ and $\R^3$ with an interface between two non-homogeneous and dispersive (i.e. frequency dependent) materials. The interface is given by the plane $x_1=0$ and the two media's electromagnetic properties are dependent only on $x_1$ (and the frequency $\omega$) but independent of $x_2$ and $x_3$. This paper is a continuation of \cite{BDPW24} where the problem in $\R$ and $\R^2$ with an interface between two homogeneous ($x-$independent) media was analyzed.

In the second-order formulation for the spatial profile $E$ of the electric field, and setting the permeability equal to one, Maxwell's equations for dispersive materials in $\R^N, N=2,3,$ in the frequency domain are
\begin{equation}\label{E:Maxw-2ndord}
	\nabla\times\nabla\times E-\omega^2\perm(x_1,\omega)E=0, \quad \nabla \cdot (\perm E)=0, \quad x\in \R^N,
\end{equation}
where in our interface setting
\beq\label{E:perm}
\perm(x_1,\omega)=\begin{cases}
	\perm_-(x_1,\omega), & x_1<0,\\
	\perm_+(x_1,\omega), & x_1>0.
\end{cases}
\eeq
We assume 
\beq\label{E:ass-perm}
D(\perm):=\{ \omega \in \C: \perm_\pm(\cdot,\omega)\in W^{3,\infty}(\R_\pm)  \} \neq \emptyset,
\eeq
where $\R_+:=(0,\infty)$ and $\R_-:=(-\infty,0)$.
In the case $N=2$ we additionally assume that $E$ is independent of $x_3$. Note that possible singularities of $\omega \mapsto \perm(\cdot,\omega)$ lie outside of $D(\perm)$. Such singularities are common in physical models, like the Drude or Lorentz models, see \eqref{E:Drude} and \eqref{E:Lorentz}. Our spectral analysis covers the case of $\perm$ with singularities.

We view the left hand side of \eqref{E:Maxw-2ndord} as the  operator pencil $L(\omega):=-\omega^2\perm(\cdot,\omega)+\nabla\times\nabla\times$ acting on $E$. Note that in \cite{BDPW24} we had $\perm_\pm$ independent of $x$ and studied the cases $x\in \R$ and $x\in \R^2$. We make no assumptions on the $\omega$-dependence of $\perm_\pm$. In particular, the dependence can be nonlinear and $\perm_\pm$ can be complex-valued, which is the case in non-conservative (typically dissipative) media. The resulting operator pencil is then non-self-adjoint and nonlinear in the spectral parameter $\omega$. 

The interface  induces localization of the field, see the well-known effect of surface plasmons \cite{Zayats05,Pitarke_2007,Grieser14,AMRZ17}, where the first two references are from the physics literature. However, full two- or three-dimensional localization is not expected as there is no mechanism for localization in directions parallel to the interface.
By allowing the permittivities $\perm_\pm$ to depend on $x_1$, we make it possible to consider, e.g., the interface between two materials of which one is a photonic crystal. Wave propagation in photonic crystals built from dispersive  materials is of physical interest, see, e.g., \cite{SHCL2014,AD2024}. Also, surface plasmons are often studied on surfaces of gratings or crystals made of dispersive materials, see, e.g., chapter 3 in \cite{maier2007}.

The spectrum of the dispersive Maxwell problem has been previously studied also by other authors. A linear non-self-adjoint pencil problem was considered in \cite{Lassas98,ABMW19}. A nonlinear non-self-adjoint pencil in a cavity was studied in \cite{HP20}.  Recent works on the spectrum of the Maxwell operator with dispersive media include \cite{CJ2026-I,E2024,FM2023}. For more background on operator pencils, their use in modeling physical problems and their spectra, we refer to \cite{Markus1988,MoellerPivo} as well as the introduction in \cite{BDPW24} and the references therein. Closely related to our work are the papers \cite{CHJ2017,CHJ2022,CJ2026-II} where the wave propagation at a flat interface between a dispersive medium and vacuum is investigated in $\R^2$. In \cite{CJ2026-II} also a dispersive slab inside vacuum is considered. In all cases of \cite{CHJ2017,CHJ2022,CJ2026-II} the media are, however, homogeneous, and $\perm_\pm(\omega)$ is given by a specific non-dissipative Drude model, leading to a self-adjoint problem.

In \cite{BDPW24}, where $\perm_\pm(\omega)$ were constants, we divided the spectrum into the part outside the singular set 
\beq\label{Omega0} \Omega_0:=\{\omega\in D(\perm):\omega^2\perm_+(\omega)=0 \ \text{or} \  \omega^2\perm_-(\omega)=0\}\eeq
 and the part in $\Omega_0$. In $\Omega_0$ the pencil $L(\omega)$ reduces to the operator $\nabla\times \nabla\times$ and features special spectral properties. Also, the set 
\beq\label{M} M:=\{\omega \in D(\perm): \omega^2\perm_+(\omega)\in \R_+ \ \text{or} \ \omega^2\perm_-(\omega)\in \R_+ \}\eeq
 plays a special role in \cite{BDPW24} - it lies in the essential spectrum since plane wave solutions of \eqref{E:Maxw-2ndord} exist if $\omega \in M$.

In the current paper, where $\perm_\pm$ depend on $x_1$, the coefficients $\omega^2\perm_\pm(\cdot,\omega)$ can be in $[0,\infty)$ or arbitrarily close to $[0,\infty)$ on a subset of $\R_\pm$. It is not clear to us what such values of $\omega$ mean for the spectrum. 
In most of the paper we work in the complement of the following exceptional sets
\beq\label{Omegaa}
\begin{aligned}
\Omega^+_a = \{ \omega \in D(\perm) : \dist(\omega^2 \perm_+( \R_+ , \omega), \{a\}) = 0 \}, \\
\Omega^-_a = \{ \omega \in D(\perm) : \dist(\omega^2  \perm_-( \R_-, \omega), \{a\}) = 0  \}
\end{aligned}
\eeq
for some suitably chosen $a\geq 0$ or  in the complement of the set
\beq\label{Omega} 
\Omega := \{ \omega \in D(\perm) : \dist(\omega^2 \perm(\R,\omega), \R_+) = 0\}.
\eeq
Note that the exceptional set $\Omega$ is a generalization of $\Omega_0 \cup M$.
Only for the example of $\perm_\pm$ independent of $x$, we determine the spectrum on the whole set $\C\setminus \Omega_0$.

\subsection{The Mathematical Setting}\label{S:math-setting}

Here, we  explain the origin of equation \eqref{E:Maxw-2ndord} and the functional analytic setting in which we study the spectrum. We also review the definitions of the spectrum and some of its parts for the case of operator pencils. For a more detailed discussion of this see \cite{BDPW24}.

The time-harmonic Maxwell equations in $\R^N, N=2,3,$ in the absence of free charges and free currents and with a constant permeability take the form
\begin{equation}\label{E:Maxw-1stord}
\begin{aligned}
\nabla\times E & = \ri\omega \boldsymbol{\mu} H, \\
\nabla\times H & = -\ri\omega D, \quad D =\perm(x_1,\omega)E,\\
\nabla \cdot D & =0, \quad \nabla \cdot H =0,
\end{aligned}
\end{equation}
where 
$$\nabla := (\pa_{x_1},\pa_{x_2},\pa_{x_3})^\top \ \text{if} \ N=3 \qquad \text{and} \qquad  \nabla := (\pa_{x_1},\pa_{x_2},0)^\top \ \text{if} \ N=2.$$
$\perm$ is the electric permittivity of the medium and $\boldsymbol{\mu}>0$ is the magnetic permeability of the material.  In the rest of the paper we set, for the sake of brevity,
$$\boldsymbol{\mu}=1.$$
This can be achieved by rescaling $H$ and the variable $x$. This simplification has no effect on the results. We assume that $\perm$ depends on space only through the variable $x_1$.

Equation \eqref{E:Maxw-1stord} arises from the time-dependent dispersive Maxwell equations by the generalized time-harmonic ansatz
$(\mathcal E, \mathcal H)(x,t)=(E,H)(x)e^{-\ri\omega t}+(\overline{E},\overline{H})(x)e^{\ri\overline{\omega} t}, \omega \in\C,$ for the electric field $\mathcal E$ and the magnetic field $\mathcal H$. In detail, the displacement field $\cD$ in the time domain is
$$\cD(x,t)=\perm_0\left(\mathcal{E}(x,t)+\int_{-\infty}^t\chi(x_1,t-s)\mathcal{E}(x,s)\dd s\right),$$
where $\chi:\R^2\to\R$ is called the response function or susceptibility.  The above time-harmonic ansatz produces $\cD(x,t)=D(x)e^{-\ri \omega t}+\bar{D}(x)e^{\ri \overline{\omega}t}$ with
$D =\perm_0 (1+\cF_t(\chi)(x_1,\omega))E,$
where $\cF_t$ denotes the temporal Fourier transform, i.e.,
$$\cF_t(\chi)(x_1,\omega):=\int_\R \chi(x_1,t)e^{\ri\omega t}\dd t.$$
Setting
$$\perm(x_1,\omega):=\perm_0 (1+ \cF_t(\chi)(x_1,\omega)),$$
one recovers $D$ in \eqref{E:Maxw-1stord}.

In the case of a non-dispersive dielectric in $x_1>0$ and a dispersive metal in $x_1<0$ the following is a simple example,
	\begin{equation}\label{E:chi}
		\cF_t(\chi)(x_1,\omega):=
		\begin{cases}
			\cF_t(\chi_m)(\omega) & \text{for } x_1<0, \\
			\eta & \text{for } x_1>0,
		\end{cases}
	\end{equation}
	where $\eta>0$ and where physically relevant frequency dependent permittivities $\chi_m$ are, e.g., the Drude model
		\beq\label{E:Drude}
		\cF_t(\chi_m)(\omega)=-\frac{c_D}{\omega^2+\ri\gamma\omega}
		\eeq
		with $ c_D,\gamma>0$ and the Lorentz model
		\beq\label{E:Lorentz}
		\cF_t(\chi_m)(\omega)=-\frac{c_L}{\omega^2+\ri\gamma\omega-\omega_*^2}
		\eeq
		with $c_L,\gamma,\omega_*>0$, see e.g. \cite{ACL2018,CJM2023, Pitarke_2007}.
For an inhomogeneous structure, e.g., a crystal, the constants in \eqref{E:chi}, \eqref{E:Drude} and \eqref{E:Lorentz} may be $x$-dependent.

In the second-order formulation for the $E$-field, system \eqref{E:Maxw-1stord} translates to \eqref{E:Maxw-2ndord}, which is our object of study. The geometry we are considering consists of the two half-spaces
$$\R^N_\pm :=\R_\pm \times \R^{N-1}, N=2,3, \quad \text{where} \quad \R_\pm=\{x\in \R: \pm x>0\}.$$
In each half-space, the permittivity is smooth (that is, $W^{3,\infty}$) for $\omega$ inside $D(\perm)$ with $\perm$ given by \eqref{E:perm}.

Throughout the paper the complex square root of $z=|z|e^{\ri \theta}, \theta \in (-\pi,\pi]$ is defined as $\sqrt{z}:=\sqrt{|z|}e^{\ri \theta/2}$. Hence $\Real(\sqrt{z})\geq 0$.

As mentioned above, we study the  spectral problem corresponding to \eqref{E:Maxw-2ndord} as an operator pencil.  We  first define
\begin{equation}\label{def:pencil}
\cL:=(L(\omega))_{\omega\in D(\perm)},
\end{equation}
\begin{equation}\label{E:pencil}
L(\omega):T(\nabla)-B(\omega), \quad \text{where} \quad  T(\nabla)u:=\nabla\times \nabla\times u, \ B(\omega)u:= \omega^2\perm(x_1,\omega)u.
\end{equation}
Note that in \cite{BDPW24} the pencil $\cL$ was defined using two parameters, namely $\omega$ and $\lambda\in \C$, where $\lambda$ was needed in order to define isolatedness of eigenvalues and hence the discrete spectrum. Here, however, we do not discuss these concepts and therefore omit the parameter $\lambda$.

Next, we describe our choice of function spaces. We work in the Hilbert space
$L^2(\R^N)^3 :=L^2(\R^N,\C^3)$
and choose the domain
\begin{equation}\label{E:domain}
\cD_\omega:=\{E \in L^2(\R^N)^3:  \nabla \times E, \ \nabla\times \nabla\times E\in L^2(\R^N)^3,
\nabla\cdot(\perm(\cdot,\omega) E)=0 \ (\text{distributionally})\}.
\end{equation}
Due to the divergence condition in the domain and because the divergence of the curl vanishes, $L(\omega)$ maps into the space of divergence free functions
$$L(\omega): \cD_\omega\to\cR :=\{f\in L^2(\R^N)^3: \nabla \cdot f =0\}.$$
We are in a non-standard situation with the domain of the operator not being a subspace of the codomain.

Note that $\cD_\omega$ can be written using $L^2$-conditions on each half-space and jump conditions across the interface, namely,
$$
\cD_\omega=\{E \in L^2(\R^N)^3:  \nabla \times E, \ \nabla\times \nabla\times E\in L^2(\R_\pm^N)^3,
\nabla\cdot(\perm(\cdot,\omega) E)=0 \ \text{on }\R^N_\pm \ \text{ and \eqref{E:IFC} holds} \}
$$
\beq\label{E:IFC}
\begin{aligned}
T_+^n(\perm(\cdot,\omega)E)-T_-^n(\perm(\cdot,\omega)E)=0, \\ 
 T_+^t(E)-T_-^t(E)=0, \\
T_+^t(\nabla \times E)-T_-^t(\nabla \times E)=0,
\end{aligned}
\eeq
where the normal trace operators $T_\pm^n$ and the tangential trace operators $T_\pm^t$ are defined in Appendix \ref{S:traces}.

Let us denote the variables for the directions parallel to the interface by $x_\parallel$, i.e.,
$$x_\parallel:=x_2 ~ {\rm if}~ N=2 \quad  {\rm and} \quad  x_\parallel:=(x_2,x_3)^\top ~ {\rm if}~ N=3.$$ To elements of $\cD_\omega$ we apply the Fourier transformation in the $x_\parallel-$variables, which in the special case of $E(x_1,\cdot)\in L^1(\R^{N-1})$ is given by the formula $$u(x_1,k):=
\begin{cases}
u(x_1,k_2):=\widehat{E}(x_1,k_2):=(2\pi)^{-1/2}\int_{\R}E(x_1,x_2)e^{-\ri k_2 x_2 }\dd x_2, & N=2,\\
\\
u(x_1,k_2,k_3):=\widehat{E}(x_1,k_2,k_3):=(2\pi)^{-1}\int_{\R^2}E(x_1,x_2,x_3)e^{-\ri(k_2 x_2 + k_3 x_3)}\dd (x_2,x_3), & N=3
\end{cases}
$$
with the variable $k\in \R^{N-1},$ and otherwise is defined by the standard extension procedure.

The functions $\nabla \times E$ and $\nabla \times \nabla \times E$ transform to
$$
\begin{aligned}
&\nabla_k\times u = \bspm
\ri k_2 u_3 \\ -\pa_{x_1}u_3\\ \pa_{x_1} u_2-\ri k_2 u_1
\espm \quad \text{and} \quad   T_k(\pa_{x_1})u:=\nabla_k\times \nabla_k\times u = \bspm k_2^2 & \ri k_2\pa_{x_1}& 0 \\ \ri k_2\pa_{x_1} & -\pa_{x_1}^2 & 0 \\ 0 & 0 & k_2^2-\pa_{x_1}^2\espm\qquad \text{if} \ N=2,\\
&\nabla_k\times u = \bspm
\ri k_2 u_3 -\ri k_3 u_2\\ \ri k_3 u_1-\pa_{x_1}u_3\\ \pa_{x_1} u_2-\ri k_2 u_1
\espm \quad \text{and} \quad   T_k(\pa_{x_1})u:=\nabla_k\times \nabla_k\times u = \bspm |k|^2 & \ri k_2\pa_{x_1}& \ri k_3 \pa_{x_1} \\ \ri k_2\pa_{x_1} & k_3^2-\pa_{x_1}^2 & -k_2k_3 \\ \ri k_3 \pa_{x_1} & -k_2k_3 & k_2^2-\pa_{x_1}^2\espm\qquad \text{if} \ N=3,
\end{aligned}
$$
with $|k|^2=k_2^2+k_3^2$ and
$$\nabla_k:= \begin{cases}
	(\pa_{x_1},\ri k_2, 0)^\top, & N=2,\\
	(\pa_{x_1},\ri k_2,\ri k_3)^\top, &N=3.
\end{cases}$$

This leads us to consider the operator  $\widehat{L}$, where
\beq\label{E:NL-eval}
 \widehat{L}(\omega)u:=T_k(\pa_{x_1})u-\omega^2\perm(x_1,\omega)u, \ x_1 \in \R\setminus \{0\}, k\in\R^{N-1}.
\eeq
Clearly, the operator $\widehat{L}(\omega)$ for $N=2$ can be obtained from $\widehat{L}(\omega)$ for $N=3$ by setting $k_3=0$. However, to avoid any confusion, we present many properties of $\widehat{L}$ in the two cases separately.

The domain of $\widehat{L}(\omega)$ can be rewritten via conditions on each component as
\beq\label{E:Dom-FT}
\begin{aligned}
	\widehat{\cD}_\omega=&\left\{u\in L^2(\R^2)^3: k_2u_3,u_3', \ri k_2u_1-u_2',k_2^2u_1+\ri k_2u_2', \ri k_2 u_1'-u_2'', -u_3'' +k_2^2u_3 \in L^2(\R_\pm^2),\right.\\
	&\left.\text{ and \eqref{E:divergence-u}, \eqref{E:IFC-u} hold}\right\}, \hbox{ for } N=2,\\
\widehat{\cD}_\omega=&\left\{u\in L^2(\R^3)^3: k_2u_3-k_3u_2,\ri k_3u_1-u_3', \ri k_2u_1-u_2',|k|^2u_1+\ri k_2u_2'+\ri k_3 u_3',\right.\\
&\left. \hspace{-1cm} \ri k_2 u_1'-u_2''+k_3^2u_2-k_2k_3 u_3, \ri k_3 u_1'-u_3'' -k_2k_3 u_2+k_2^2u_3 \in L^2(\R_\pm^3), \text{ and \eqref{E:divergence-u}, \eqref{E:IFC-u} hold}\right\}, \hbox{ for } N=3,
\end{aligned}
\eeq
where $f':=\pa_{x_1}f$, and
\beq\label{E:divergence-u}
\begin{aligned}
(\perm(\cdot,\omega) u_1)'+\ri \perm(\cdot,\omega)k_2 u_2&=0 \quad  \text{on} \quad \R^2_\pm \quad \text{if} \quad N=2,\\
(\perm(\cdot,\omega) u_1)'+\ri \perm(\cdot,\omega)(k_2 u_2 + k_3 u_3)&=0 \quad  \text{on} \quad \R^3_\pm \quad \text{if} \quad N=3,
\end{aligned}
\eeq
\beq\label{E:IFC-u}
\begin{aligned}
	\llbracket \perm(\cdot,\omega) u_1\rrbracket &:=\widehat{T_+^n}(\perm(\cdot,\omega)u)-\widehat{T_-^n}(\perm(\cdot,\omega)u)=0, \\ 
	(\llbracket u_2\rrbracket,\llbracket u_3\rrbracket)&:=\widehat{T_+^t}(u)-\widehat{T_-^t}(u)=0, \\
	(\llbracket (\nabla_k\times u)_2\rrbracket,\llbracket (\nabla_k\times u)_3\rrbracket)&=\widehat{T_+^t}(\nabla_k \times u)-\widehat{T_-^t}(\nabla_k \times u)=0,
\end{aligned}
\eeq
where the normal trace operators $\widehat{T_\pm^n}$ and the tangential trace operators $\widehat{T_\pm^t}$ in the Fourier space are defined in Appendix \ref{S:traces}. The definitions are such that $\widehat{T_\pm^n} \cF_t = \cF_t T_\pm^n$ in $\Hdiv(\R^N_\pm)$ and $\widehat{T_\pm^t} \cF_t = \cF_t T_\pm^t$ in $\Hcurl(\R^N_\pm)$, see  Appendix \ref{S:traces}.
In \eqref{E:IFC-u} and the rest of the paper we use the bracket notation both for jumps of normal and tangential traces.

The codomain $\cR$ in the Fourier variables reads
\beq\label{E:R-Four}
\widehat{\cR} = \begin{cases}
\{ r\in L^2(\R^2)^3:  r_1' + \ri k_2 r_2 =0 \}, & N=2,\\
\{ r\in L^2(\R^3)^3:  r_1' + \ri (k_2 r_2 + k_3 r_3)=0 \}, & N=3.
\end{cases}
\eeq

	On $\cD_\omega$ we introduce the norm
	\beq\label{E:graph-norm}
	\|u\|_{\cD_\omega}:= \sqrt{\langle u,u\rangle_{\cD_\omega}}, \ \langle u,v\rangle_{\cD_\omega}:=\langle u,v\rangle_{L^2(\R^N)} + \langle T(\nabla)u,T(\nabla)v\rangle_{L^2(\R^N)},
	\eeq
	i.e.,~the graph norm corresponding to $T(\nabla)$.
	\blem\label{L:HS}
$(\cD_\omega,\langle\cdot,\cdot\rangle_{\cD_\omega})$ is a Hilbert space.
\elem
\begin{proof}
Due to the integration by parts in Lemma \ref{L:PI-3D}
$$\|\nabla\times v\|_{L^2(\R^N)^3}^2=\langle v,\nabla\times \nabla \times v\rangle_{L^2(\R^N)^3}\leq \|v\|_{L^2(\R^N)^3} \|\nabla\times\nabla\times v\|_{L^2(\R^N)^3}$$
for each $v\in \cD_\omega$ and $N=2,3$. Hence the norm
$$\|v\|:=\left(\|v\|_{L^2(\R^N)^3}^2+\|\nabla\times v\|_{L^2(\R^N)^3}^2+\|\nabla\times \nabla\times v\|_{L^2(\R^N)^3}^2\right)^{1/2}$$
is equivalent to $\|\cdot\|_{\cD_\omega}$.

The rest of the proof is completely analogous to the proof of Lemma 4.2 in \cite{BDPW24}.
	\end{proof}

\bcor\label{C:closed-op}
The operator $T(\nabla):\cD_{\omega}\to\cR$ is closed.
\ecor
\begin{proof}
For a sequence $(u_k,T(\nabla)u_k)$ with $u_k\in \cD_\omega$ and $ u_k\to u$ and $T(\nabla)u_k\to v$ in $L^2(\R^N)^3$ we need to show that $v=T(\nabla)u.$ The convergence assumption implies that $(u_k)$ is Cauchy in $\cD_\omega$. 
Using Lemma \ref{L:HS} and the fact that the convergence in $\cD_\omega$ implies convergence in $L^2$, we get $u \in \cD_\omega$ and $T(\nabla) u_k \to T(\nabla)u$. We conclude that $v=T(\nabla)u.$
\end{proof}
Because of Corollary \ref{C:closed-op} for any fixed $\omega$ and $\lambda$ the operator $L(\omega;\lambda)$ is a sum of a bounded and a closed operator, and thus closed. Hence, it makes sense to study its spectrum.

\subsubsection{Definition of the Spectrum}\label{sec:spec}
Here we give the definitions of those spectral sets which are studied in the rest of the paper. For more details on the spectra of operator pencils, we refer to \cite{BDPW24}.

The \textbf{resolvent set}  of  the pencil $\cL$ is defined as
	$$\rho(\cL):=\{\omega\in D(\perm): L(\omega):\cD_\omega\to \cR \text{ is bijective with a bounded inverse}\},$$
and the \textbf{spectrum} of $\cL$ is given by
	\beq\label{E:Pspec}
	\sigma(\cL):=D(\perm)\setminus \rho(\cL)=\{\omega\in D(\perm): 0\in \sigma(L(\omega))\},
	\eeq
	where $\sigma(L(\omega))$ is the spectrum (defined in the standard sense) of the operator $L(\omega)$ at a
	fixed   $\omega$.

Next, the \textbf{point spectrum} is
	$$\sigma_p(\cL):=\{\omega\in D(\perm): \exists u \in \cD_\omega\setminus \{0\}: L(\omega)u=0 \}.$$
	Elements of $\sigma_p(\cL)$ are called \textbf{eigenvalues} of $\cL$. The \textbf{geometric multiplicity} of an eigenvalue $\omega$ is given by $\dim\ker(L(\omega))$.

	A sequence $(u^{(n)})\subset \cD_\omega$ is called a   \textbf{Weyl sequence at $\omega$} if
	$$\|u^{(n)}\|_{L^2(\R^N)^3}=1 \ \forall n\in\N, u^{(n)} \rightharpoonup 0 \text{ in } L^2(\R^N)^3, \text{ and } \|L(\omega)u^{(n)}\|_{L^2(\R^N)^3}\to 0 \ (n\to\infty).$$
	The \textbf{Weyl spectrum} is
	$$\sweyl(\cL):=\{\omega \in D(\perm): \text{ a Weyl sequence at $\omega$ exists}\}.$$



\subsection{Main Results}

Here we describe our main spectral results in a compact form. More details and all proofs will follow in subsequent sections.

Our results are subdivided into the case of general permittivity functions $\perm_\pm$ satisfying just \eqref{E:ass-perm}, the more special case where both $\perm_+(\cdot,\omega)$ and $\perm_-(\cdot,\omega)$ are periodic, and the even more special (but still physically very relevant) homogeneous case where $\perm_+$ and $\perm_-$ are independent of the spatial variable, i.e., $\perm_\pm(x_1,\omega)=\perm_\pm(\omega).$ The results for the homogeneous case with $N=3$ are a direct extension of \cite{BDPW24}, where the same setting in dimensions $N=1,2$, was studied.

The results on general $\perm_\pm$ contain various assumptions which are too lengthy to be mentioned here in this summary section. We refer to Theorem \ref{T:resolv-set-gen}, where a subset of the resolvent set is identified, to Theorems \ref{T:Weyl-sp-x1} and \ref{T:Weyl-sp} which provide subsets of the Weyl spectrum corresponding to radiation in the $x_1$-direction (Theorem \ref{T:Weyl-sp-x1}) and in a direction along the interface $\{x_1=0\}$ (Theorem \ref{T:Weyl-sp}), and to Theorem \ref{T:no-evals-fin-multip} containing a partial result on the non-existence of eigenvalues.

For the periodic and the homogeneous case, the required assumptions can be stated much more concisely and hence we give more details here.

\begin{customthm}{A}[periodic $\perm_\pm$]
\label{T:main-periodic}
Let $N=2,3$ and suppose that in addition to \eqref{E:ass-perm} the functions $\perm_\pm(\cdot,\omega)$ are $a_\pm$-periodic for each $\omega \in D(\perm)$.
Let $D_\pm^{(v)}(k)$ and $D_\pm^{(w)}(k)$ denote the discriminants of the differential equations \eqref{3} and \eqref{4}, respectively, on the intervals $[0,a_+]$ and $[-a_-,0]$ according to the index $\pm$,  respectively (see Sec.~\ref{S:per-estimates}).  Let $d(k)$ and $\widetilde{d}(k)$ be the quantities defined in Definition \ref{D:S} for \eqref{3} and \eqref{4}, respectively.

\begin{itemize}
\item[(a)] (\textbf{resolvent set}) Suppose that for some fixed $\omega\in D(\perm)\setminus\Omega$  and all $k \in \R^{N-1}$
$$D_+^{(v)}(k), D_-^{(v)}(k), D_+^{(w)}(k), D_-^{(w)}(k) \notin [-2,2],$$
\beq\label{ass-det}
d(k)\neq 0 \quad \hbox{ and } \quad\widetilde{d}(k)\neq 0.
\eeq

Furthermore, assume that
\beq\label{eps-cond} 
\perm_+(0,\omega)+\perm_-(0,\omega)\neq 0 \quad \text{or} \quad \perm_+'(0,\omega)-\perm_-'(0,\omega)\neq 0.
\eeq
Then $\omega\in \rho(\cL)$.

\item[(b)] (\textbf{radiation away from the interface}) For some fixed $\omega \in D(\perm)$ suppose that $k_0\in \R^{N-1}$ exists such that one of the following two options holds.
$$
\begin{aligned}
&\mathrm{I)} \qquad \left(\omega \notin \Omega^{+}_{|k_0|^2} \ \text{and} \ D_+^{(v)}(k_0) \in [-2,2] \right) \ \text{or}\ D_+^{(w)}(k_0)\in [-2,2],\\
&\mathrm{II)}  \qquad \left(\omega \notin \Omega^{-}_{|k_0|^2} \ \text{and} \ D_-^{(v)}(k_0)\in [-2,2] \right) \ \text{or} \ D_-^{(w)}(k_0)\in [-2,2].
\end{aligned}
$$
Then $\omega\in \sweyl(\cL)$.

\item[(c)] (\textbf{radiation along the interface})  For some fixed $\omega \in D(\perm)$ suppose that $k_0\in \R^{N-1}$ exists such that
$$\omega \notin \Omega^{+}_{|k_0|^2} \cup \Omega^{-}_{|k_0|^2}~, \quad D_+^{(v)}(k_0),D_-^{(v)}(k_0) \notin [-2,2], \quad \text{and} \quad d(k)=0$$
or
$$D_+^{(w)}(k_0),D_-^{(w)}(k_0) \notin [-2,2] \quad \text{and} \quad \widetilde{d}(k)=0.$$
Then $\omega\in \sweyl(\cL)$.

\item[(d)] (\textbf{eigenvalues}) There are no eigenvalues of $\cL$ of finite geometric multiplicity.
\end{itemize}
\end{customthm}
For the proof see Proposition \ref{P:Sp-resolv} and Theorem \ref{T:resolv-set-gen} (for (a)), Lemma \ref{lem:Weyl-x1-per} (for (b)), Lemma \ref{L:Lk-evals-periodic} and Theorem \ref{T:Weyl-sp} (for (c)), and Theorem \ref{T:no-evals-fin-multip} (for (d)).
We add that part (b) is proved by constructing Weyl sequences with their support moving to infinity in the $x_1$-direction (radiation in $x_1$), while for part (c) Weyl sequences localized at the interface $\{x_1=0\}$ with their support moving to infinity in an $x_\parallel$-direction (radiation along the interface) are chosen.

For the radiation in $x_1$ we study also the ``asymptotically periodic'' case where, for some $\omega\in D(\perm)$,
$$\perm_\pm(\cdot,\omega)=\perm_{p,\pm}(\cdot,\omega)+\perm_{as,\pm}(\cdot,\omega)$$
with $\perm_{p,\pm}(\cdot,\omega)$ periodic and $\perm_{as,\pm}(\cdot,\omega)\in L^1(\R_\pm)$ (for spectrum generated by equation \eqref{4}) or $\perm_{as,\pm}(\cdot,\omega)\in W^{1,1}(\R_\pm)$ (for spectrum generated by equation \eqref{3}). In Lemma \ref{L:Weyl-x1-as-per} we prove that under the additional condition of stability, i.e., both solutions in the fundamental system of the corresponding periodic problem (with $\perm_p$ instead of $\perm$) being bounded (on $\R_+$ or $\R_-$) we get $\omega \in \sweyl(\cL)$.

While Theorem \ref{T:main-periodic} uses conditions which, in concrete examples, can be difficult to check explicitly, like the conditions on the discriminants, the next theorem on the homogeneous case is fully explicit. Recall that for $\perm_\pm(\cdot,\omega)=\perm_\pm(\omega)$ the singular set $\Omega_0$ is defined in \eqref{Omega0}.

\begin{customthm}{B}[constant $\perm_\pm$]\label{T:main-homog}
Let $N=2,3$ and suppose $\perm_\pm(\cdot,\omega)=\perm_\pm(\omega)\in \C$ for each $\omega \in D(\perm)\setminus \Omega_0$. Define
$$
\begin{aligned}
\cN^\mathrm{red}&:=\left\{\omega\in D(\perm)\setminus \Omega_0: \perm_+(\omega)+\perm_-(\omega)\neq 0, \frac{\omega^2\perm_+^2(\omega)}{\perm_+(\omega)+\perm_-(\omega)}, \frac{\omega^2\perm_-^2(\omega)}{\perm_+(\omega)+\perm_-(\omega)}\notin [0,\infty),\right.\\
&\left. \qquad  \frac{\omega^2\perm_+(\omega)\perm_-(\omega)}{\perm_+(\omega)+\perm_-(\omega)}\in (0,\infty)\right\},\\
\cM_\pm^\mathrm{red}&:=\{\omega\in D(\perm)\setminus \Omega_0:\omega^2\perm_\pm(\omega)\in (0,\infty)\}.
\end{aligned}
$$
Then
\begin{itemize}
\item[(a)] (\textbf{resolvent set})  $$D(\perm) \setminus (\cN^\mathrm{red} \cup \cM_+^\mathrm{red}\cup \cM_-^\mathrm{red}\cup \Omega_0)\subset \rho(\cL),$$
\item[(b)] (\textbf{radiation orthogonal to the interface})
$$\cM_+^\mathrm{red}\cup \cM_-^\mathrm{red}\subset \sweyl(\cL),$$
\item[(c)] (\textbf{radiation along the interface})
$$\cN^\mathrm{red}\subset \sweyl(\cL),$$
\item[(d)] (\textbf{eigenvalues})
$$\sigma_p(\cL)\setminus \Omega_0=\emptyset.$$
\end{itemize}
\end{customthm}

The proof is given in Proposition \ref{P:resolv-hom} (for (a)), Lemma \ref{L:Weyl-hom-x1} (for (b)), Lemma \ref{L:Weyl-interf-hom} (for (c)), and Lemma \ref{L:pt-spec-hom} (for (d)).

\medskip

Note that the index ``red" in the notation of the above sets describes the fact that theses sets are ``reduced" by $\Omega_0$ being excluded.
It is interesting to note that for the case of homogeneous media in $\R^N_\pm$ our results provide a full description of the spectrum outside the exceptional set $\Omega_0$, like in the 1D and 2D cases in \cite{BDPW24}. 
Indeed, in the homogeneous case, Theorem \ref{T:main-homog} shows that the set
$\cN^\mathrm{red}\cup \cM_+^\mathrm{red}\cup \cM_-^\mathrm{red}$ comprises the Weyl spectrum (outside $\Omega_0$), while the complement of this set is the resolvent set (outside $\Omega_0$). In addition, we know that there are no eigenvalues outside $\Omega_0$. This is in contrast with the case of $x_1$-dependent $\perm_\pm$, where
there is a potentially larger set in $D(\perm)$, for which we cannot decide whether points lie in the spectrum or the resolvent set.

\brem\label{N-eq-Nred}
 Theorem \ref{T:main-homog} for the interface between two homogeneous media gives the same spectrum (outside $\Omega_0$) of the $N$-dimensional Maxwell operator as Theorem 3 in \cite{BDPW24} for the two dimensional setting. Indeed, the sets $\cM_\pm^\mathrm{red}$ coincide with the sets $M_\pm$ in  Theorem 3 of \cite{BDPW24} and, as we show next, the set
$$\cN:= \left\{ \omega \in D(\perm)\setminus \Omega_0: \exists a\geq 0 \text{ such that } \omega^2\perm_\pm(\omega)\notin[a,\infty), \ \frac{\omega^2 \perm_+(\omega)\perm_-(\omega)}{\perm_+(\omega)+\perm_-(\omega)}=a\right\}
$$
used in \cite{BDPW24} equals $\cN^\mathrm{red}$. 

For the proof of $\cN^\mathrm{red}=\cN$, let us label the three conditions
$\perm_++\perm_-\neq 0, \frac{\omega^2\perm_\pm^2}{\perm_++\perm_-} \notin [0,\infty),$ and $\frac{\omega^2\perm_+\perm_-}{\perm_++\perm_-}\in (0,\infty) $ in $\cN^\mathrm{red}$ by (i), (ii), and (iii) respectively.
The two conditions $\omega^2\perm_\pm\notin[a,\infty)$ and $\frac{\omega^2 \perm_+\perm_-}{\perm_++\perm_-}=a$ in $\cN$ will be labeled by (iv) and (v) respectively.

We first show $\cN^\mathrm{red}\subset \cN.$ If $\omega \in \cN^\mathrm{red}$, then (iii) implies $\omega^2\perm_+\perm_-=a(\perm_++\perm_-)$ with some $a>0$ and hence (v) holds. Also, dividing this equation by $\omega^2\perm_-^2$ and using (ii) yields
\beq\label{E:perm-frac}
\frac{\perm_+}{\perm_-}=a\frac{\perm_++\perm_-}{\omega^2\perm_-^2} \notin [0,\infty).
\eeq 
Equation \eqref{E:perm-frac} can also be reformulated as
$$\omega^2\perm_+=a(1+\tfrac{\perm_+}{\perm_-})$$
and
$$\omega^2\perm_-=a(1+\tfrac{\perm_-}{\perm_+}).$$
Using the fact that $\tfrac{\perm_+}{\perm_-}\notin [0,\infty)$, we get $\omega^2\perm_\pm \notin [a,\infty)$, i.e., (iv).

Now let $\omega \in \cN$. Property (v) and $\omega \notin \Omega_0$ imply $\perm_++\perm_-\neq 0$ and $a>0$ in (v) such that (i) and (iii) hold. To show (ii), first note that (v) can be rewritten as $\omega^2\perm_+=a(1+\tfrac{\perm_+}{\perm_-})$, which lies outside $[a,\infty)$ due to (iv). Equivalently
$\tfrac{\perm_+}{\perm_-}\notin [0,\infty)$.  Finally, multiplying (v) by $\tfrac{\perm_+}{\perm_-}$, we obtain
$$
\frac{\omega^2\perm_+^2}{\perm_++\perm_-} = a\frac{\perm_+}{\perm_-} \notin [0,\infty)$$
and similarly, $\frac{\omega^2\perm_-^2}{\perm_++\perm_-} \notin [0,\infty)$. In other words, (ii) holds and we conclude $\omega \in \cN^\mathrm{red}$.
\erem

\section{Preliminaries}

Before starting with the spectral considerations, we  transform the problem to one that is essentially diagonal (in the new variables $v,w$), introduce some standard notation and prove important estimates for the periodic problem. The periodic setting  provides an important application of our results.

\subsection{Transformation of the Problem}\label{S:trafo-vw}

The next lemma suggests that the problem
\be\label{1}
\widehat{L}(\omega)\bspm u_1 \\ u_2 \\ u_3\espm=\bspm r_1 \\ r_2 \\ r_3\espm \quad (x,k)\in \R\times \R^{N-1}
\ee
in $\widehat{\cD}_\omega$ for arbitrary $r\in \widehat{\cR}$ can be decoupled into two independent problems in the new variables $u_1,v,w$. In fact, in the case $N=2$ the problem is decoupled in the original variables due to the block diagonal structure of $\widehat{L}$. Nevertheless, in order to be able to work with the two cases $N=2$ and $N=3$ simultaneously, we use the new variables $u_1,v,w$ also for $N=2$. 
Note that from \eqref{E:Lktil}, it is easy to see that $u_1$ can be expressed in terms of $v$.

\blem\label{L:equiv-op}
Let $N=2,3$ and $\omega \in D(\perm)$. Consider the operator $\cU$ on $L^2 (\R^N)^3$ given by
$$\left(\cU \bspm u_1 \\ u_2 \\ u_3\espm\right)(x_1,k) = U(k) \bspm u_1 \\ u_2 \\ u_3\espm (x_1,k) \hbox{ for } k\in \R^{N-1}\setminus\{0\},$$
where
$$
U(k)=   \left( \begin{array}{ccc} 1&0&0 \\ 0 & \sign(k) & 0 \\ 0 & 0 &\sign(k) \end{array} \right)   ~ {\rm with }~ k=k_2 \in \R \setminus \{ 0 \} ~ {\rm for }~ N=2,
$$
$$
U(k)=   \left( \begin{array}{ccc} 1&0&0 \\ 0 & \frac{k_2}{|k|} & \frac{k_3}{|k|} \\ 0 & -\frac{k_3}{|k|} &\frac{k_2}{|k|} \end{array} \right)   ~ {\rm with }~ k=(k_2,k_3) \in \R^2 \setminus \{ 0 \} ~ {\rm for }~ N=3,
$$
and
where $|k|=|k_2|$ for $N=2$ and $|k|=\sqrt{k_2^2+k_3^2}$ for $N=3$.
Then $\cU$ is unitary and $\cU: \widehat{\cD}_\omega\to \widetilde{\cD}_\omega$ is bijective, where 
$$
\begin{aligned}
	\widetilde{\cD}_\omega :=& \left\{ (u_1, v, w) \in L^2 (\R^N)^3  : v' - \ri |k| u_1, v'' - \ri |k| u_1',  \ri |k| v' + |k|^2 u_1, w', |k| w, w'' - |k|^2 w \in L^2(\R_\pm^N),\right. \\
	&\left. \text{and} \ \eqref{E:divergence-uvw} \ \text{and}\ \eqref{E:IFC-uvw} \ \text{hold}\right\}.
\end{aligned}
$$
\beq\label{E:divergence-uvw}
(\perm(\cdot,\omega)  u_1)' + \ri |k| \perm(\cdot,\omega) v = 0 \quad  \text{on} \quad \R^N_\pm,
\eeq
\beq\label{E:IFC-uvw}
\begin{aligned}
	\llbracket \perm(\cdot,\omega)  u_1 \rrbracket = \llbracket v \rrbracket = \llbracket v' - \ri |k| u_1 \rrbracket = \llbracket w \rrbracket = \llbracket w' \rrbracket = 0.
\end{aligned}
\eeq
The operator 
\beq\label{E:Lktil}
\widetilde{L}(\omega)= \begin{pmatrix}
	|k|^2 -\omega^2\perm(x_1,\omega) & \ri |k|\pa_{x_1} & 0\\
	\ri |k| \pa_{x_1} & -\pa_{x_1}^2 -\omega^2\perm(x_1,\omega) & 0\\
	0 & 0 & -\pa_{x_1}^2+|k|^2-\omega^2\perm(x_1,\omega)
\end{pmatrix}
\eeq
with the domain  $\widetilde{\cD}_\omega$ satisfies
$ \widetilde{L}= \cU \widehat{L} \cU^*$.
\elem

\bpf
We prove the lemma in detail for $N=3$. The case $N=2$ follows analogously.

For fixed $k\in \R^2 \setminus \{ 0 \}$, it is clear that the matrix $U(k)$ is unitary. Therefore, the matrix operator $\cU$, having only multiplication operators as entries, is unitary on $L^2 (\R^3)^3$.

Let $(u_1, u_2, u_3) \in \widehat{\cD}_\omega$, and set 
\beq\label{eq:Uu}
\bspm u_1 \\ v \\ w\espm=\cU \bspm u_1 \\ u_2 \\ u_3\espm.
\eeq
 Then
\bea
 \widetilde{L}(\omega)\cU \bspm u_1 \\ u_2 \\ u_3\espm &=& \bspm 
	\left(|k|^2 -\omega^2\perm(x_1,\omega)\right) u_1 + \ri |k| v' \\
	\ri |k| u_1'- v''  -\omega^2\perm(x_1,\omega)v\\
-w''+\left(|k|^2-\omega^2\perm(x_1,\omega)\right)w
\espm  \\
&=& \bspm 
	\left(|k|^2 -\omega^2\perm(x_1,\omega)\right) u_1 + \ri k_2u_2'+\ri k_3u_3' \\
	\ri |k| u_1'-\frac{1}{|k|}\left[ k_2u_2''+k_3u_3''  +\omega^2\perm(x_1,\omega)(k_2u_2+k_3u_3)\right]\\
-\frac{1}{|k|}\left[ -k_3u_2''+k_2u_3'' -\left(|k|^2-\omega^2\perm(x_1,\omega)\right)(-k_3u_2+k_2u_3)\right]
\espm \\
&=& \left( \begin{array}{ccc} 1&0&0 \\ 0 & \frac{k_2}{|k|} & \frac{k_3}{|k|} \\ 0 & -\frac{k_3}{|k|} &\frac{k_2}{|k|} \end{array} \right) 
\bspm |k|^2-\omega^2\perm(x_1,\omega) & \ri k_2\pa_{x_1}& \ri k_3 \pa_{x_1} \\ \ri k_2\pa_{x_1} & k_3^2-\omega^2\perm(x_1,\omega)-\pa_{x_1}^2 & -k_2k_3 \\ \ri k_3 \pa_{x_1} & -k_2k_3 & k_2^2-\omega^2\perm(x_1,\omega)-\pa_{x_1}^2\espm
\bspm u_1 \\ u_2 \\ u_3\espm\\
& = & \cU\widehat{L}(\omega) \bspm u_1 \\ u_2 \\ u_3\espm,
\eea
so formally $\widetilde{L}\cU=\cU\widehat{L}$, as claimed. To prove the full operator identity, it remains to check that $\cU: \widehat{\cD}_\omega\to \widetilde{\cD}_\omega$ is bijective. 

Let $(u_1, u_2, u_3) \in \widehat{\cD}_\omega$. The property $(\perm  u_1)' + \ri \perm (k_2u_2+k_3u_3)=0$ contained in $\widehat{\cD}_\omega$ gives, using \eqref{eq:Uu},
\be\label{8a}
(\perm  u_1)' + \ri |k|\perm v = 0 ~ {\rm on }\ \R_{\pm}.
\ee
The conditions $u_2, u_3 \in L^2 (\R_{\pm}^3), \llbracket u_2 \rrbracket = \llbracket u_3 \rrbracket = 0$ imply, using \eqref{eq:Uu} and the boundedness of $\frac{k_2}{|k|}$ and $\frac{k_3}{|k|}$,
\be\label{9}
v,w \in L^2 (\R_{\pm}^3), \llbracket v \rrbracket = \llbracket w \rrbracket = 0.
\ee
In the same way, the conditions $\ri k_2 u_1 - u_2'$, $\ri k_3 u_1 - u_3' \in L^2 (\R_{\pm}^3), \llbracket \ri k_2 u_1 - u_2' \rrbracket = \llbracket \ri k_3 u_1 - u_3' \rrbracket = 0$
amount to
\be\label{10}
\ri |k| u_1 - v', w' \in L^2 (\R^3_{\pm}), \llbracket \ri |k| u_1 - v' \rrbracket = \llbracket w' \rrbracket = 0,
\ee
and $\ri k_2 u_1' - u_2'' + k_3^2 u_2 - k_2 k_3 u_3, \ri k_3 u_1' - u_3'' - k_2 k_3 u_2 + k_2^2 u_3 \in L^2 (\R^3_{\pm})$ to
\be\label{11}
\ri |k| u_1' - v'', - w'' + |k|^2 w \in L^2 (\R^3_{\pm}).
\ee
The remaining conditions $k_2u_3 - k_3 u_2, |k|^2 u_1 + \ri k_2 u_2' + \ri k_3 u_3' \in L^2 (\R^3_{\pm})$ give
\be\label{12}
|k|w, |k|^2 u_1 + \ri |k| v' \in L^2 (\R^3_{\pm}).
\ee
The conditions $u_1 \in L^2 (\R^3_{\pm}), \llbracket\perm u_1 \rrbracket = 0$ remain unchanged. \eqref{8a} - \eqref{12} imply $(u_1, v, w) \in \widetilde{\mathcal{D}}_\omega$.

Conversely, let $(u_1, v, w) \in \widetilde{\mathcal{D}}_\omega$. Hence \eqref{8a} - \eqref{12} are true. We define $u_2, u_3$ by inversion of \eqref{eq:Uu}, i.e.
\be\label{13}
{u_2 \choose u_3} := \frac{1}{|k|} \left( \begin{array}{cc} k_2 &- k_3\\k_3 &k_2 \end{array}\right) {v \choose w}.
\ee
Hence \eqref{9} implies
\bea
u_2, u_3 \in L^2 (\R^3_{\pm}), \llbracket u_2 \rrbracket = \llbracket u_3 \rrbracket = 0.
\eea
By \eqref{12},
\bea
k_2 u_3 - k_3 u_2 = |k| w \in L^2 (\R^3_{\pm}),
\eea
and  by \eqref{10}
\bea
\left( \begin{array}{c} \ri k_2 u_1 - u_2'\\\ri k_3 u_1 - u_3' \end{array}\right) = \frac{1}{|k|}  \left( \begin{array}{cc} k_2 &- k_3\\k_3 &k_2 \end{array}\right)  \left( \begin{array}{c} \ri |k| u_1 - v'\\-w' \end{array}\right) \in L^2 (\R^3_{\pm})^2,
\eea
and also $\llbracket \ri k_2 u_1 - u_2' \rrbracket = \llbracket \ri k_3 u_1 - u_3' \rrbracket = 0$.\\

Similarly,
\bea
\left( \begin{array}{c} \ri k_2 u_1' - u_2'' + k_3^2 u_2 - k_2 k_3 u_3\\\ri k_3 u_1' - u_3'' - k_2k_3 u_2 + k_2^2 u_3 \end{array}\right) = \frac{1}{|k|} \left( \begin{array}{cc} k_2 & -k_3\\k_3 & k_2 \end{array}\right) \left( \begin{array}{c} \ri |k| u_1' - v''\\-w'' + |k|^2 w \end{array}\right)\in L^2 (\R^3_{\pm})^2
\eea
by \eqref{11}. Furthermore,
\bea
|k|^2 u_1 + \ri k_2 u_2' + \ri k_3 u_3' =|k|^2 u_1 + \ri |k| v' \in L^2 (\R^3_{\pm})
\eea
by \eqref{12}. Finally, \eqref{8a} implies
\bea
(\perm u_1)' + \ri\perm  (k_2 u_2 + k_3 u_3) = (\perm  u_1)' + \ri |k| \perm v = 0 {\rm ~on~} \R^3_{\pm}.
\eea
Altogether, $(u_1, u_2, u_3) \in \widehat{\cD}_\omega$, which completes the proof.
\epf

 Next, we reformulate the transformed equations $\widetilde{L} (u_1,v,w)^\top= \cU r$
 so that both $v$ and $w$ satisfy a Sturm-Liouville type equation. This turns out to be useful in the later analysis.

\blem\label{lem:equiv} Suppose that $\omega \in D(\perm)\setminus \Omega$. Problem \eqref{1} for $u\in \widehat{\cD}_\omega$ is equivalent to the following ``almost'' decoupled system for 
$(u_1,v,w)^\top = \cU (u_1,u_2,u_3)^\top \in \widetilde{\cD}_\omega$ given by
\be\label{3}
\begin{array}{D{r} D{l} D{l}}
- \left(  \frac{\perm }{|k|^2 - \omega^2\perm } v' \right) ' + \perm v &=  \frac{\ri |k|\perm '}{(|k|^2-\omega^2\perm )^2} r_1 + \frac{\sign(k_2)\perm }{(|k|^2-\omega^2\perm )} r_2  ~ &{\rm if }~ N=2,\\
- \left(  \frac{\perm }{|k|^2 - \omega^2\perm } v' \right) ' + \perm v &=  \frac{\ri |k|\perm '}{(|k|^2-\omega^2\perm )^2} r_1 + \frac{\perm }{|k| (|k|^2-\omega^2\perm )} (k_2r_2 + k_3r_3) ~ &{\rm if }~ N=3,
\end{array}
\ee
\be\label{4}
\begin{array}{D{r} D{l} D{l}}
\\
	-w'' + (|k|^2 - \omega^2\perm ) w &= \sign(k_2)r_3   ~ &{\rm if }~ N=2,\\
	-w'' + (|k|^2 - \omega^2\perm ) w &= \frac{1}{|k|} (k_2r_3 - k_3r_2)  ~ &{\rm if }~ N=3,
\end{array}
\ee
\be\label{5}
u_1 = -\frac{ \ri |k|}{|k|^2 - \omega^2\perm } v' + \frac{1}{|k|^2-\omega^2\perm   }r_1.
\ee
\elem

\bpf The proof is carried out for the three dimensional setting. The case $N=2$ is obtained from this proof by simply setting $k_3=0$ and replacing $\R^3$ by $\R^2$. Let $(u_1, u_2, u_3) \in \widehat{\cD}_\omega$ denote a solution of \eqref{1}.
The first equation in \eqref{1} implies
$$u_1 = \frac{r_1 - \ri k_2 u_2' - \ri k_3 u_3'}{|k|^2- \omega^2\perm } = \frac{r_1 - \ri |k| v'}{|k|^2-\omega^2\perm },$$ i.e. \eqref{5}, and
\be\label{6}
u_1' =  \frac{r_1' - \ri |k| v''}{|k|^2- \omega^2\perm } +  \frac{\omega^2\perm '}{(|k|^2-\omega^2\perm )^2} (r_1 - \ri |k| v').
\ee
Using Lemma \ref{L:equiv-op}, the second and third equation in \eqref{1} give
\begin{align}\label{8}
	\begin{split}
		\ri |k| u_1' - (v'' + \omega^2\perm  v) = \frac{1}{|k|} (k_2r_2 + k_3 r_3),\\
		- w'' + (|k|^2 - \omega^2\perm ) w = \frac{1}{|k|} (k_2 r_3 - k_3 r_2).
	\end{split}
\end{align}
The second equation in \eqref{8} is just \eqref{4}. Inserting \eqref{6} into the first equation gives
\ben 
 &&\frac{\omega^2}{|k|^2 - \omega^2\perm } \left\{ \perm  v'' + \frac{|k|^2 \perm '}{|k|^2 - \omega^2\perm } v' - \perm (|k|^2 - \omega^2\perm ) v \right\} \nonumber \\
&&= - \frac{\ri |k|}{|k|^2 - \omega^2\perm } r_1' 
- \frac{\ri |k| \omega^2\perm '}{(|k|^2 - \omega^2\perm )^2} r_1 + \frac{1}{|k|} (k_2 r_2 + k_3 r_3). 
\een
Using \eqref{E:R-Four}, we find equation \eqref{3}.

Conversely, let $(u_1, v, w) \in \widetilde{\mathcal{D}}_\omega$ satisfy \eqref{3}, \eqref{4}, \eqref{5}. By Lemma \ref{L:equiv-op}, equation \eqref{1} is equivalent to 
\be\label{10tilde}
\widetilde{L}(\omega)\bspm u_1 \\ v \\ w\espm=\cU \bspm r_1 \\ r_2 \\ r_3\espm= \bspm r_1 \\ \frac{1}{|k|}(k_2r_2+k_3r_3) \\ \frac{1}{|k|}(-k_3r_2+k_2r_3)\espm.
\ee
The third equation in \eqref{10tilde} is simply \eqref{4}. From \eqref{5}, we deduce $(|k|^2 - \omega^2\perm) u_1 = - \ri |k| v' + r_1$, giving the first equation in \eqref{10tilde}.
Differentiating \eqref{5}, we can replace $u_1'$ in the second equation of \eqref{10tilde}. Again using \eqref{E:R-Four}, we recover the first equation in \eqref{8}. This is precisely the second equation in \eqref{10tilde}.
\epf

In a further transformation, we will now rewrite the expression involving $v$ in equation \eqref{3}. This will be useful for later calculations, as it will allow us to treat the $v$- and $w$-components simultaneously. However, we caution that this is only a transformation of the differential equation, and we will not consider how the domain of $\widetilde{L}(\omega)$ is transformed. 
\blem\label{lem:v-eqn}
Suppose that $\omega \in D(\perm)\setminus \Omega$.
On $\R_-\cup\R_+$, the equation
\beq\label{E:v}
- \left(  \frac{\perm }{|k|^2 - \omega^2\perm } v' \right) ' + \perm v=0
\eeq
can be rewritten as
\beq\label{E:z}
-z''+\left(|k|^2-\omega^2\perm+\frac{3\perm'^2}{4\perm^2}-\frac{\perm''}{2\perm}\right)z=0,
\eeq
where $z=\frac{\sqrt{\omega^2\perm}}{|k|^2-\omega^2\perm } v'$. Moreover, solutions $v$ of \eqref{E:v} can be recovered from solutions $z$ of \eqref{E:z} via
\beq\label{E:vz}
v=\frac{1}{\sqrt{\omega^2\perm}}z'+\frac{\perm'}{2\perm\sqrt{\omega^2\perm}}z. 
\eeq
\elem

\begin{remark}\label{rem:root}
We note that in general, for $z_1,z_2\in\C$, $\sqrt{z_1z_2}\neq \sqrt{z_1}\sqrt{z_2}$. However, for a complex-valued function $f$, we still have 
\beq
\sqrt{f}\ '=\frac{f'}{2\sqrt{f}}\quad\hbox{and}\quad  \left(\frac{1}{\sqrt{f}}\right)'=-\frac{f'}{2f\sqrt{f}}.
\eeq
This can be seen by differentiating $\left(\sqrt{f}\right)^2=f$ and $\frac{1}{\sqrt{f}}\sqrt{f}=1$, respectively.
\end{remark}

\bpf
To shorten the notation we define $$W_\pm:= \omega^2\perm_\pm.$$
For a solution $v$ of (\ref{E:v}),
let
$y = \frac{W_\pm}{|k|^2 - W_\pm}\, v'$.
Then $y' = W_\pm v$ and
\begin{equation}\label{eq:yeq}
	y'' = W_\pm'\,v + W_\pm\,v' = \frac{W_\pm'}{W_\pm}\,y' + (|k|^2 - W_\pm)\,y.
\end{equation}
Due to the definition of $z$, we have $y = \sqrt{W_\pm}\,z$, and (using Remark \ref{rem:root})
\begin{align*}
	y' &= \frac{W_\pm'}{2\sqrt{W_\pm}}\,z + \sqrt{W_\pm}\,z',
	\\
	y'' &= \left(\frac{W_\pm''}{2\sqrt{W_\pm}} - \frac{W_\pm'^2}{4W_\pm\sqrt{W_\pm}}\right) z + \frac{W_\pm'}{\sqrt{W_\pm}}\,z' + \sqrt{W_\pm}\,z''
\end{align*}
and from equation $(\ref{eq:yeq})$ also
\begin{equation*}
	y'' = \frac{W_\pm'}{W_\pm} \left(\frac{W_\pm'}{2\sqrt{W_\pm}}\,z + \sqrt{W_\pm}\,z'\right) + (|k|^2 - W_\pm)\,\sqrt{W_\pm}\,z.
\end{equation*}
Combining the last two equations gives
\begin{equation*}
	-z''+\left(|k|^2-W_\pm+\frac{3W_\pm'^2}{4W_\pm^2}-\frac{W_\pm''}{2W_\pm}\right)z=0.
\end{equation*}
This is \eqref{E:z}.

Conversely, for a solution $z$ of \eqref{E:z}, let $v$ be defined by \eqref{E:vz}. We calculate
\ben
 v' &=& \frac{1}{\sqrt{W_\pm}}z''+ \left(\frac{W_\pm'}{2W_\pm\sqrt{W_\pm}}\right)'\,z\\
	&=& \frac{1}{\sqrt{W_\pm}}\left[ |k|^2-W_\pm+\frac{3(W_\pm')^2}{4W_\pm^2}-\frac{W_\pm''}{2W_\pm}\right] z +\left[ \frac{W_\pm''}{2W_\pm\sqrt{W_\pm}}-\frac{3(W_\pm')^2}{4W_\pm^2\sqrt{W_\pm}} \right] z\\
	&=& \frac{|k|^2 - W_\pm}{\sqrt{W_\pm}}\,z,
\een
which gives 
\ben
 \left(  \frac{\perm_\pm }{|k|^2 - \omega^2\perm_\pm } v' \right) ' &=& \left(\frac{\perm_\pm}{\sqrt{W_\pm}} z\right)'\ =\ \frac{1}{\omega^2}\left(\sqrt{W_\pm} z\right)'\ =\ \frac{1}{\omega^2}\left(\frac{W_\pm'}{2\sqrt{W_\pm}}z + \sqrt{W_\pm} z'\right)\\
&=&  \frac{W_\pm}{\omega^2}\left(\frac{W_\pm'}{2W_\pm\sqrt{W_\pm}}z + \frac{1}{\sqrt{W_\pm}} z'\right)\ =\ \perm_\pm v,
\een
i.e., $v$ solves (\ref{E:v}).
\epf

\subsection{Notation and Fundamental Estimates for Periodic Media}\label{S:per-estimates}

For the case of periodic $\perm_\pm(\cdot,\omega)$  we consider a periodic $\perm_p:\R\to\C$, which is the periodic extension of a periodic $\perm_+(\cdot,\omega)$ or $\perm_-(\cdot,\omega)$ to $\R$. Then many of the spectral results and related estimates can be provided in a more explicit form. Hence, we study the periodic case as an important example.

As we have shown in Lemma \ref{lem:equiv} and Lemma \ref{lem:v-eqn}, the Maxwell spectral problem can be transformed and decoupled into spectral problems for Sturm-Liouville or Schr\"odinger operators. In this section we provide some well-known general results from the Floquet theory of
Sturm-Liouville equations with periodic coefficients (see also
\cite{MW2004,Eastham1973,BES2012}) and some fundamental estimates in the
case of a Schr\"odinger-type equation which may be of more general interest. 
These estimates do not seem to appear in the literature. 

Equations (\ref{3}) and (\ref{4}) with $r = 0$ are of the Sturm-Liouville type
\begin{equation}\label{eq:SL}
	-(P y')' + Q y = 0.
\end{equation}
In the case of (\ref{3}) we have
\beq\label{E:pv-qv}
P=P_v:=\frac{\perm_p}{|k|^2-\omega^2\perm_p}, \quad Q= Q_v:=\perm_p
\eeq
and in the case of (\ref{4}) we have
\beq\label{E:pw-qw}
P=P_w:=1, \quad Q= Q_w:=|k|^2-\omega^2\perm_p.
\eeq
Equation \eqref{eq:SL} can be rewritten as a first-order system for the vector-valued function
$(y, P y')^\top$.
The canonical fundamental system of this system is the $2\times 2$ matrix-valued solution of the initial value problem
\begin{equation*}
	\Phi' = \begin{pmatrix} 0 & \frac 1 P \\ Q & 0 \end{pmatrix} \Phi, \qquad
	\Phi(0) = \begin{pmatrix} 1 & 0 \\ 0 & 1 \end{pmatrix}.
\end{equation*}
When the coefficients of the equation have period $a > 0$, the growth
of solutions of the equation can be characterised in terms of
the monodromy matrix $\Phi(a)$. This matrix has (Wronskian) determinant 1 and
therefore its eigenvalues are determined by its trace, i.e., the
\textbf{discriminant} $D := \mathop{\rm Tr} \Phi(a)$.
If $D \notin \{-2, 2\}$, then $\Phi(a)$ has two distinct eigenvalues
$e^{\kappa a}$ and $e^{-\kappa a}$, where $\kappa \in \mathbb C \setminus \{0\}$ has
$\Real \kappa \ge 0$ and,
without loss of generality, $\Imag \kappa \in (-\pi/a, \pi/a]$.
If $\Psi^{(1)}, \Psi^{(2)}\in \mathbb C^2\setminus\{0\}$ are corresponding eigenvectors, then
equation (\ref{eq:SL}) has the linearly independent solutions \textbf{(Floquet solutions)}
$\psi_1 := (\Phi \Psi^{(1)})_1$ and $\psi_2 := (\Phi \Psi^{(2)})_1$,
which can also be written in the form
\begin{equation}\label{eq:6}
	\psi_1(x) = e^{\kappa x}\,p_1(x), \qquad \psi_2(x) = e^{-\kappa x}\,p_2(x) \qquad (x \in \mathbb R)
\end{equation}
with $a$-periodic functions $p_1$ and $p_2$.
Like the eigenvectors, the functions $p_1$ and $p_2$ are only fixed up to a multiplicative complex constant.

If $D \in (-2, 2)$, then $\kappa$ is purely imaginary and the Floquet solutions
(and consequently all solutions) are globally bounded; this is the case of
\textbf{stability}.
If $D \in \mathbb C \setminus [-2, 2]$, then $\Real \kappa > 0$ and $\psi_1$ is
exponentially small at $-\infty$ but unbounded at $\infty$, $\psi_2$ is
unbounded at $-\infty$ and exponentially small at $\infty$, so all non-trivial solutions
of (\ref{eq:SL}) on $\R$ are unbounded; this is the case of \textbf{instability}.
If $D \in \{-2, 2\}$, then $\Phi(a)$ only has one eigenvalue, either 1 or -1. If it has
geometric multiplicity 2,
then $\Phi(a)$ is the unit matrix or its negative and all solutions of
(\ref{eq:SL}) are $2a$-periodic and globally bounded (stability).
If the eigenvalue has geometric multiplicity 1, then (\ref{eq:SL}) has one
bounded solution (up to multiples) while all other solutions are unbounded;
this is the case of \textbf{conditional stability}.

We now give some fundamental estimates for the Schr\"odinger-type equation,
i.e., (\ref{eq:SL}) with $P\equiv 1$. Note that, with $r = 0$, equation (\ref{4})
is of this form; we have seen in Lemma \ref{lem:v-eqn} that equation (\ref{3}) can also be transformed
into this type.
In this spirit, consider the differential equation 
\beq\label{E:lV}
-u'' + (l^2 - V)\,u = 0,
\eeq
where $l\geq 0$ and 
$V : \mathbb R \rightarrow \mathbb C$ is bounded and periodic of period $a>0$.
Then $\kappa$ and the periodic functions $p_1, p_2$ in equation (\ref{eq:6})
depend on $l$. In the following, we give uniform bounds on $\kappa$, and on $p_1, p_2$ and their
derivatives for suitable $l$-dependent choice of the multiplicative constants
for these functions and for sufficiently large $l$.

\begin{lem}\label{lem:LemmaA} With $\kappa$ defined as above for equation \eqref{E:lV},
	there exist $C > 0$ and $l_0 > 0$
	such that
	\begin{equation}
		|\kappa(l) - l| \le \frac C l \qquad (l \ge l_0).
	\end{equation}
\end{lem}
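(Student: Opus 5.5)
Since $\kappa$ is determined (modulo $2\pi\ri/a$) by $e^{\kappa a}+e^{-\kappa a}=D(l)$, with $\Real\kappa\ge0$ and $\Imag\kappa\in(-\pi/a,\pi/a]$, the plan is to compare the discriminant $D(l)$ of \eqref{E:lV} with the free discriminant $2\cosh(la)$ for $V\equiv0$. We solve for $\kappa$ through $\cosh(\kappa a)=D(l)/2$ and invert the $\cosh$ near $la$.

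\emph{Step 1: asymptotics of the fundamental system.} Let $\phi_1,\phi_2$ be the canonical solutions of \eqref{E:lV} with $\phi_1(0)=1,\phi_1'(0)=0$ and $\phi_2(0)=0,\phi_2'(0)=1$. Variation of constants against $\cosh(lx)$ and $\sinh(lx)/l$ gives the Volterra equations
\[
\phi_1(x)=\cosh(lx)-\int_0^x\frac{\sinh(l(x-t))}{l}V(t)\phi_1(t)\dd t ,
\]
and an analogous one for $\phi_2$. Setting $g(x)=e^{-lx}\phi_1(x)$ on $[0,a]$ and using $|\sinh(l(x-t))|\le e^{l(x-t)}/2$, Gronwall's inequality gives $|g|\le C$ with $C$ independent of $l\ge1$. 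Feeding this back into the integral equation yields
\[
\phi_1(a)=\cosh(la)+O\!\left(\frac{e^{la}}{l}\right), \qquad \phi_2'(a)=\cosh(la)+O\!\left(\frac{e^{la}}{l}\right),
\]
where the second estimate comes from differentiating the integral equation. Hence
\[
D(l)=2\cosh(la)+O\!\left(\frac{e^{la}}{l}\right)=e^{la}\left(1+O\!\left(\frac1l\right)\right).
\]

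\emph{Step 2: inverting the discriminant.} For large $l$ we have $|D|$ large, so $D\notin[-2,2]$ and $e^{\kappa a}$ is the root of modulus $>1$ of $\mu^2-D\mu+1=0$:
\[
e^{\kappa a}=\frac{D+\sqrt{D^2-4}}{2}=D\left(1+O(|D|^{-2})\right)=e^{la}\left(1+O\!\left(\frac1l\right)\right).
\]
Taking logarithms gives $\kappa a=la+O(1/l)+2\pi\ri m$ for some integer $m$. The normalisation $\Imag\kappa\in(-\pi/a,\pi/a]$, together with the error being small, forces $m=0$ once $l\ge l_0$. Therefore $|\kappa-l|\le C/l$.

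\emph{Main obstacle.} The delicate point is Step 1. The naive Gronwall bound on the full equation gives the correction as $O(e^{la}\cdot\|V\|_\infty a/l)$, and that is exactly the $1/l$ gain we need. We must check that the kernel estimate produces a factor $1/l$ rather than merely a constant, which it does because $\sinh(l(x-t))/l$ carries $1/l$. The $\phi_2'$ term must be handled with the same weight $e^{-lx}$, and we must make sure the choice of branch of the logarithm matches the convention for $\kappa$.
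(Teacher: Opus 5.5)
Your proof is correct. Its overall architecture matches the paper's: establish $D(l)=2\cosh(la)\,(1+O(1/l))$, select the eigenvalue of modulus $>1$ of the monodromy matrix, and use the normalisation $\Imag\kappa\in(-\pi/a,\pi/a]$ to remove the $2\pi\ri m$ ambiguity of the logarithm.

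The difference lies in how you obtain the discriminant estimate.

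The paper does not work with the canonical solutions directly. It proceeds as follows:
\begin{enumerate}
\item It takes the solution $u$ with $u(0)=1$, $u'(0)=l$ and writes $u=e^{lx}(1+w)$.
\item It proves $\|w\|_\infty\le\frac al\|V\|_\infty$ and $\|w'\|_\infty\le\frac1l\|V\|_\infty$ by a bootstrap on the variation-of-constants formula, valid for $l>\|V\|_\infty a$.
\item It builds the second solution by reduction of order, $\phi_2(x)=u(x)\int_0^x u(t)^{-2}\,dt$. This is legitimate because $\|w\|_\infty<1$ makes $u$ zero-free.
\item It sets $\phi_1=u-l\phi_2$ and expands $\phi_1(a)$ and $\phi_2'(a)$ about $\cosh(la)$.
\end{enumerate}

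You instead write Volterra equations for $\phi_1,\phi_2$ with the free kernel $\sinh(l(x-t))/l$ and apply Gronwall to $e^{-lx}\phi_j$. This is the more standard and more symmetric route, and it avoids the reduction-of-order bookkeeping. The paper's version, in exchange, gives explicit constants and an explicit threshold for $l$.

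One point you should make explicit in the write-up concerns $\phi_2'(a)$. There you need $|e^{-lt}\phi_2(t)|\le C/l$, not merely $\le C$; with only the weaker bound the correction is $O(e^{la})$, which is too crude. Your Gronwall argument does deliver the stronger bound, because the free term $e^{-lx}\sinh(lx)/l$ is already $O(1/l)$. It then follows that $|\phi_2'(a)-\cosh(la)|\le\int_0^a\cosh(l(a-t))\,|V(t)|\,|\phi_2(t)|\,dt=O(e^{la}/l)$.
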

\begin{proof}
	Consider the solution $u$ of equation \eqref{E:lV} with initial values
	$u(0) = 1$, $u'(0) = l$, and set
	$w(x) = e^{-l x}\,u(x) - 1$ $(x \in [0,a])$. Then
	$w(0) = w'(0) = 0$ and 
	$w$ satisfies the differential equation
	\begin{equation}
		w'' + 2 l w' = -V\,(1 + w).
	\end{equation}
	By the variation of constants formula, we obtain
	\begin{equation}\label{eq:12}
		\left(\begin{matrix}w(x) \\ w'(x)\end{matrix}\right) = \frac 1{2l} \int_0^x V(t)\,(1 + w(t)) \left(\begin{matrix} -1 + e^{-2l(x-t)} \\ -2l e^{-2l(x-t)}\end{matrix}\right) d t,
	\end{equation}
	which implies
	\begin{equation*}
		|w(x)| \le \frac 1{2l}\,\|V\|_\infty \, (1 + \|w\|_\infty) \int_0^x (1 - e^{-2l(x-t)})\, d t
		\qquad (x \in [0, a])
	\end{equation*}
	and hence, with $\|\cdot\|_\infty:=\|\cdot\|_{L^\infty([0,a])}$,
	\begin{equation*}
		\|w\|_\infty \le \frac 1{2l}\,\|V\|_\infty \, (1 + \|w\|_\infty)\,a.
	\end{equation*}
	Therefore we find for all $l > \|V\|_\infty a$ the estimate
	\begin{equation}\label{eq:13}
		\|w\|_\infty \leq \left(1-\frac{a}{2l}\|V\|_\infty \right)^{-1} \frac{a}{2l} \|V\|_\infty \leq \frac a l\,\|V\|_\infty  < 1.
	\end{equation}
	Furthermore, equation (\ref{eq:12}) also gives for all $l > \|V\|_\infty a$
	the estimate
	\begin{align}
		\|w'\|_\infty &\le \|V\|_\infty\,(1 + \|w\|_\infty) \sup_{x\in (0,a)}\int_0^x e^{-2l(x-t)}\,d t
		\nonumber
		\\
		&\le \|V\|_\infty\,(1 + 1)\, \frac 1{2l}\,(1 - e^{-2la})
		\le \frac 1 l\,\|V\|_\infty.
		\label{eq:14}
	\end{align}
	We now use d'Alembert's formula to find a second solution of equation \eqref{E:lV} that is linearly independent of $u$.
	Note that for $l > \|V\|_\infty a$, the solution $u(x) = e^{lx}(1 + w(x))$
	$(x \in [0, a])$ has no zeros due to the bound (\ref{eq:13}), so
	\begin{equation*}
		\phi_2(x) := u(x) \int_0^x \frac 1{u(t)^2}\, d t \qquad (x \in [0, a])
	\end{equation*}
	defines such a second solution. It has derivative
	\begin{equation}\label{eq:16}
		\phi_2'(x) = u'(x) \int_0^x \frac 1{u(t)^2}\,d t + \frac 1{u(x)}
		\qquad (x \in [0, a])
	\end{equation}
	and hence initial values
	$\phi_2(0) = 0,$ $\phi_2'(0) = 1.$
	Setting
	$\phi_1 := u - l\,\phi_2$,
	we then see that
	\begin{equation*}
		\Phi = \left(\begin{matrix} \phi_1 & \phi_2 \\ \phi_1' & \phi_2' \end{matrix}\right)
	\end{equation*}
	is the canonical fundamental system of equation \eqref{E:lV}, as
	$\Phi(0) = \mathbb I$.
	
	We now consider $\phi_1$ and $\phi_2'$ in order to obtain an estimate for
	the discriminant $D=\tr \Phi(a)$.
	In the following, let $l_0 > \|V\|_\infty\,\max\{1, a\}$.
	Note that $\phi_1$ can be written as
	\begin{equation*}
		\phi_1(x) = e^{lx}\,(1 + w(x))\,\left(1 - l \int_0^x \frac{e^{-2lt}}{(1 + w(t))^2}\, d t \right)
		\qquad (x \in [0, a]).
	\end{equation*}
	Since, for $l \ge l_0$,
	\begin{equation*}
		\left| 1 - \frac 1 {(1+w(t))^2} \right| \le \frac \c l
	\end{equation*}
	by the estimate (\ref{eq:13}) (where the constant is independent of $l$),
	we find
	\begin{equation*}
		1 - l \int_0^x \frac{e^{-2lt}}{(1+w(t))^2}\, d t
		= 1 - l \int_0^x e^{-2lt}\,d t + R_1
		= \frac 1 2\,(1 + e^{-2lx}) + R_1
	\end{equation*}
	with remainder term satisfying
	$|R_1| \le \frac \c l\,\frac 1 2\,(1 + e^{-2lx})$.
	Hence, using again the estimate (\ref{eq:13}),
	\begin{equation*}
		\phi_1(x) = (\cosh lx + e^{lx}\,R_1) (1 + w(x)) = \cosh lx + R_2
		\qquad (x \in [0,a])
	\end{equation*}
	with remainder term $|R_2| \le \frac \c l\,\cosh lx$.
	Furthermore, by equation (\ref{eq:16}),
	\begin{align*}
		\phi_2'(x) &= e^{lx}\left(l\,(1 + w(x)) + w'(x) \right) \int_0^x \frac{e^{-2lt}}{(1+w(t))^2}\,d t + \frac{e^{-lx}}{1 + w(x)}
		\\
		&= (1 + w(x) + \frac 1 l\,w'(x)) (\sinh lx + l\,e^{lx}\,R_3) + e^{-lx} + R_4
		\\
		&= \sinh lx + e^{-lx} + R_5
		= \cosh l x + R_5
		\qquad (x \in[0,a])
	\end{align*}
	for $l \ge l_0$, where
	$|R_3| \le \frac \c l\,\frac 1{2l}\,(1 - e^{-2lx})$,
	$|R_4| \le \frac \c l\,e^{-l x}$ by (\ref{eq:13}).
	Again using (\ref{eq:13}) and (\ref{eq:14}),
	$|R_5| \le \frac \c l\,\sinh lx + \frac \c l\,e^{-lx} \le \frac \c l\,\cosh lx$.
	Therefore we obtain for $l \ge l_0$
	\begin{equation*}
		D  = \phi_1(a) + \phi_2'(a) = 2 \cosh la\, (1 + R_6)
	\end{equation*}
	with $|R_6| \le \frac \c l$.
	
	Since, moreover, $\Phi(a)$ has the two eigenvalues $e^{\pm \kappa(l)a}$, we get that for $l$ large enough,
	\begin{align*}
		e^{\kappa(l) a} &= \frac 1 2 \,D + \sqrt{\frac 1 4\,D^2 - 1}
		= (1 + R_6)\,\cosh l a + \sqrt{(1+R_6)^2\,\cosh^2 l a - 1}
		\\
		&= (1 + R_6)\,\cosh l a + \sqrt{(1+R_6)^2\,\sinh^2 l a + 2 R_6 + R_6^2}
		\\
		&= (1 + R_6)\,\cosh l a + (1 + R_6)\,\sinh l a \,\sqrt{1 + R_7}
	\end{align*}
	with $|R_7| \le \frac \c l$. Then $R_8 := \sqrt{1 + R_7} - 1$ also satisfies
	$|R_8| \le \frac \c l$ and we find
	\begin{equation*}
		e^{\kappa(l) a} = e^{l a}\,(1 + R_6) + \sinh l a\,(1 + R_6)\,R_8
		= e^{l a}\,(1 + R_9)
	\end{equation*}
	with $|R_9| \le \frac \c l$.
	Consequently, denoting with $\log$ the principal branch of the logarithm
	function, we have
	\begin{equation*}
		\kappa(l) a = l a + \log(1 + R_9) + 2 \pi n \ri
	\end{equation*}
	with some $n \in \mathbb Z$. Since we assumed that $\Imag \kappa(l) a \in (-\pi,\pi]$ and have found that
	$|\Imag \log (1 + R_9)| \le |\log (1 + R_9)| \le \frac \c l$,
	we see that $n = 0$ and conclude that
	\begin{equation*}
		|\kappa(l) - l| = \frac 1 a\,|\log(1 + R_9)| \le \frac \c l
	\end{equation*}
	for $l \ge l_0$ with some $l_0>0$.
\end{proof}

\begin{lem}\label{lem:LemmaB}
	Consider the Floquet solutions (\ref{eq:6}) of equation \eqref{E:lV}. 
	There exists $l_0 > 0$ such that for $l \ge l_0$ the functions $p_1, p_2$  can be chosen so that
	\begin{equation}\label{eq:26}
		\frac 1 a \int_0^a p_1(x)\,d x = \frac 1 a \int_0^a p_2(x)\,d x = 1;
	\end{equation}
	furthermore, there is a constant $C > 0$ such that
	\begin{equation*}
		\|p_1 - 1\|_\infty, \|p_2 - 1\|_\infty, \|p_1'\|_\infty, \|p_2'\|_\infty \le \frac C l
	\end{equation*}
	for all $l \ge l_0$.
\end{lem}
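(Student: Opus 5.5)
We build the Floquet solutions explicitly from the objects of the proof of Lemma \ref{lem:LemmaA} and then normalise them. For $\psi_1$ we look for $p_1$ as an $a$-periodic solution of
\[
p'' + 2\kappa p' + (\kappa^2 - l^2 + V)p = 0 .
\]
Write $\kappa^2 - l^2 = (\kappa - l)(\kappa + l) =: \mu$. By Lemma \ref{lem:LemmaA}, $|\mu| \le C$ uniformly for $l \ge l_0$, so the zeroth-order coefficient $\mu + V$ is bounded independently of $l$. The equation then reads $p'' + 2\kappa p' = -(\mu + V)p$, with $\Real \kappa \approx l$ large; this is exactly the structure treated in \eqref{eq:12}.

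Concretely, the monodromy eigenvector is proportional to
\[
\Psi^{(1)} = \bigl(\phi_2(a),\, e^{\kappa a} - \phi_1(a)\bigr)^\top ,
\]
or to the alternative form $\bigl(e^{\kappa a} - \phi_2'(a),\, \phi_1'(a)\bigr)^\top$. Rather than estimate these components directly, I would argue on the periodic equation. Since $e^{\pm 2\kappa a} \neq 1$ (because $\Real \kappa a \ge la/2 > 0$), the periodic Green's function $G$ of $p'' + 2\kappa p'$ exists on the space of periodic functions with mean zero. Writing $p = 1 + q$ with $\frac1a\int_0^a q = 0$, we get
\[
q = -G\bigl[(\mu + V)(1 + q) - c\bigr],
\]
where the constant $c$ is the mean of $(\mu + V)(1 + q)$. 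This constant must vanish for $p$ to solve the equation, and that requirement is precisely the condition that fixes $\kappa$; we already know such a $\kappa$ exists.

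To avoid a circular argument, I would instead start from the genuine Floquet solution $\psi_1$, which is unique up to scaling because $D \neq \pm 2$ for large $l$. Set $p_1 = e^{-\kappa x}\psi_1$; this is periodic and satisfies the equation above. It remains to show that $q := p_1 - \text{mean}(p_1)$ satisfies $\|q\|_\infty \le \frac{C}{l}\,|\text{mean}(p_1)|$. For this, the key bound is that the periodic solution operator of $y \mapsto y'' + 2\kappa y'$ on mean-zero functions gives
\[
\|y\|_\infty \le \frac{C}{l}\,\|f\|_\infty, \qquad \|y'\|_\infty \le C\,\|f\|_\infty .
\]
This follows from the explicit formula $y' = \int_0^a g(x,t) f(t)\,dt$ with kernel $\frac{e^{-2\kappa(x-t)}}{1 - e^{-2\kappa a}}$ (periodised), and then integrating $y'$ to get $y$ with mean zero.

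Applying this with $f = -(\mu + V)p_1$ gives $\|q\|_\infty \le \frac{C}{l}\|p_1\|_\infty$. Combining with $\|p_1\|_\infty \le |m| + \|q\|_\infty$, where $m$ is the mean, yields $\|q\|_\infty \le \frac{2C}{l}|m|$ for large $l$. In particular $m \neq 0$, since otherwise $p_1 \equiv 0$. We therefore normalise $m = 1$, which gives \eqref{eq:26} and $\|p_1 - 1\|_\infty \le C/l$.

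For the derivative bound $\|p_1'\|_\infty \le C/l$, the estimate $\|y'\|_\infty \le C\|f\|_\infty$ is too weak. I expect this to be the main obstacle. The remedy is to bound $y'$ from the first-order equation $(y')' + 2\kappa y' = f$ directly. The periodic solution is
\[
y'(x) = \frac{1}{1 - e^{-2\kappa a}} \int_{x-a}^{x} e^{-2\kappa(x-t)} f(t)\,dt ,
\]
whose size is at most $\frac{C}{\Real\kappa}\|f\|_\infty \le \frac{C}{l}\|f\|_\infty$, since $\Real\kappa \ge l/2$ by Lemma \ref{lem:LemmaA}. For this we need $y'$ to be periodic, which holds because $p$ is periodic. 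Note that the equation for $y'$ involves no mean-zero issue: the mean-zero condition only enters when integrating $y'$ to obtain $y$, and then $\|y\|_\infty \le a\|y'\|_\infty \le C/l$. This gives both bounds at once.

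Finally, $p_2$ is handled identically with $\kappa$ replaced by $-\kappa$. The same periodic formula applies, now with the kernel decaying in the opposite direction, $e^{2\kappa(x-t)}$ on $[x, x+a]$. We take $l_0$ large enough that Lemma \ref{lem:LemmaA} and $D \notin \{\pm 2\}$ hold.
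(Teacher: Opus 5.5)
Your final argument is correct, but it takes a different and noticeably shorter route than the paper.

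\textbf{The paper's route.} The paper keeps the term $(m_j^2-l^2)q$ on the left-hand side and solves the second-order periodic problem for the mean-zero part $q$ with the exact fundamental system $e^{(l-m_j)x}$, $e^{(-l-m_j)x}$. Since $l-m_j=O(1/l)$, one exponent is nearly resonant. This forces several extra steps:
\begin{itemize}
\item the explicit Green's function \eqref{eq:32}, fixed by periodicity and the mean-zero condition;
\item lower bounds on $|\int_0^1 e^{\zeta s}\,ds|$, which use $|\Imag\kappa|=O(1/l)$;
\item a separate limiting convention when $m_j=\pm l$;
\item a second kernel computation to bound $q'$.
\end{itemize}

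\textbf{Your route.} You move $(\kappa^2-l^2)p_1$ to the right-hand side, which is legitimate because $|\kappa^2-l^2|\le C$ by Lemma \ref{lem:LemmaA}. You then treat $p_1'$ as the unique periodic solution of $z'+2\kappa z=f$. Uniqueness holds since $e^{-2\kappa a}\neq 1$, and $p_1'$ is periodic because $p_1$ is. The one-sided exponential kernel then gives directly $\|p_1'\|_\infty\le \|f\|_\infty/(2\Real\kappa)\le C\|p_1\|_\infty/l$. The bound on $p_1-\mathrm{mean}(p_1)$ follows from $\|q\|_\infty\le a\|q'\|_\infty$, so both estimates come out at once. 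The case of $p_2$, with kernel $e^{2\kappa(x-t)}$ on $[x,x+a]$, is identical.

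\textbf{Comparison.} Your argument uses from Lemma \ref{lem:LemmaA} only $\Real\kappa\ge l/2$ and $|\kappa^2-l^2|\le C$, not the imaginary-part control. It also sidesteps the near-resonance entirely. What the paper's route buys is an explicit representation of $q$, which the lemma itself does not need.

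In a write-up you can simply drop the opening Green's-function paragraph (with the constant $c$ ``fixing $\kappa$'') and the preliminary bound $\|y'\|_\infty\le C\|f\|_\infty$. Your later first-order argument supersedes both.
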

\begin{proof}
	Set $m_1 := \kappa(l)$, $m_2 := -\kappa(l)$, and let $j \in \{1,2\}$.
	Differentiating the Floquet solution $\psi_j$ in equation (\ref{eq:6}) twice and
	using the differential equation \eqref{E:lV}, we find that $p_j$ satisfies
	the differential equation
	\begin{equation*}
		p_j'' + 2m_j p_j' + (m_j^2 - l^2 + V)\,p_j = 0
	\end{equation*}
	with periodic boundary conditions.
	Let
	\begin{equation*}
		M := \frac 1 a \int_0^a p_j(x)\,d x.
	\end{equation*}
	Then $p_j(x) = M + q(x)$ $(x \in [0, a])$, where
	\begin{equation}\label{eq:28}
		\int_0^a q(x)\,d x = 0.
	\end{equation}
	The function $q$ satisfies the differential equation
	\begin{equation}\label{eq:29}
		q'' + 2 m_j q' + (m_j^2 - l^2)\,q = -V\,q - (m_j^2 - l^2 + V)\,M,
	\end{equation}
	again with periodic boundary conditions.
	Integrating this equation over the interval $[0, a]$ and using the periodic
	boundary conditions for $q$, we find that
	$r := -V q - (m_j^2 - l^2 + V) M$
	has integral
	\begin{equation*}
		\int_0^a r(x)\,d x = 0.
	\end{equation*}
	The homogeneous equation where we set the right-hand side of equation (\ref{eq:29}) equal to 0 has the fundamental system
	$e^{(-m_j+l) x}$, $e^{(-m_j-l)x}$ $(x \in [0, a])$, and we can solve the
	inhomogeneous equation (\ref{eq:29}) by the variation of constants method.
	The general solution for $m_j^2-l^2 \neq 0$ takes the form
	\begin{equation*}
		q(x) = \frac 1{2l} \int_0^x \left(e^{(-m_j+l)(x-t)} - e^{(-m_j-l)(x-t)}\right)\,r(t)\,d t + c_1\,e^{(-m_j+l)x} + c_2\,e^{(-m_j-l)x}, \quad x \in [0, a],
	\end{equation*}
	and determining $c_1$ and $c_2$ from the conditions
	$q(a) = q(0)$ and equation (\ref{eq:28}), we find
	for all $x \in [0, a]$
	\begin{align}
		q(x) &= \frac 1{2l} \int_0^x\left(-\frac{e^{(l-m_j)(x-t)}-1}{e^{(l-m_j)a}-1} +\frac{e^{(-l-m_j)(x-t)}-1}{e^{(-l-m_j)a}-1}\right)r(t)\,d t
		\nonumber
		\\
		&\quad + \frac 1{2l} \int_x^a\left(\frac{e^{(-l+m_j)(t-x)}-1}{e^{(-l+m_j)a}-1} - \frac{e^{(l+m_j)(t-x)}-1}{e^{(l+m_j)a}-1}\right) r(t)\,d t;
		\label{eq:32}
	\end{align}
	this solution then also satisfies the boundary condition $q'(a) = q'(0)$.
	If $m_j = l$ or $m_j = -l$, then the formula for $q$ is still valid provided the corresponding quotient in the integrands is replaced with the respective limit $\frac{x-t}a$ or $\frac{t-x}a$.
	
	In order to estimate the four quotients appearing in the formula for $q$, we
	observe that they all are of the form
	\begin{equation*}
		\frac{e^{\lambda\zeta}-1}{e^\zeta - 1}
		= \frac{\int_0^\lambda e^{\zeta s}\,d s}{\int_0^1 e^{\zeta s}\,d s},
	\end{equation*}
	where $\zeta \in \{\pm(l-m_j)a, \pm(l+m_j)a\}$ and $\lambda = \frac{|x-t|}a \in [0, 1]$.
	Abbreviating $\alpha := \Real \zeta$, $\beta := \Imag \zeta$, we find
	\begin{equation}\label{eq:34}
		\left|\int_0^\lambda e^{\zeta s}\,d s \right|
		\le \int_0^\lambda e^{\alpha s}\,d s \le \int_0^1 e^{\alpha s}\,d s,
	\end{equation}
	and
	\begin{align*}
		\left|\int_0^1 e^{\zeta s}\,d s\right|
		&= \left|\int_0^1 e^{\alpha s}\,d s + \int_0^1 e^{\alpha s}\left(e^{\ri\beta s}-1\right)\,d s\right|
		\\
		&\ge \int_0^1 e^{\alpha s}\,d s - \int_0^1 e^{\alpha s}\,\left|2i\,e^{\ri \beta s/2}\,\sin \frac{\beta s}2 \right|\,d s
		\\
		&\ge \int_0^1 e^{\alpha s}\,d s - 2 \int_0^1 e^{\alpha s}\,\sin \frac{|\beta|\,s}2\,d s.
	\end{align*}
	Since
	\begin{equation*}
		|\beta| = a|\Imag(\pm l \pm m_j)| = a|\Imag m_j| \le \frac \c l
	\end{equation*}
	for sufficiently large $l$ by Lemma \ref{lem:LemmaA},
	\begin{equation*}
		2 \sin \frac{|\beta|\,s}2 \le \frac \c l \qquad (s \in [0, 1])
	\end{equation*}
	and therefore
	there is $l_0 > 0$ such that
	\begin{equation}\label{eq:35}
		\left|\int_0^1 e^{\zeta s}\,d s\right| \ge \left(1 - \frac \c l\right) \int_0^1 e^{\alpha s}\,d s
		\ge \frac 1 2 \int_0^1 e^{\alpha s}\,d s
	\end{equation}
	for all $l \ge l_0$.
	We thus obtain the following bound for $q$,
	\begin{align*}
		\|q\|_\infty &\le \sup_{x \in [0, a]} \frac 1{2l} \left(\int_0^x 4\,|r(t)|\,d t + \int_x^a 4\,|r(t)|\,d t\right)
		\le \frac {2a}l\,\|r\|_\infty
		\\
		&\le \frac {2a}l\,\left(\|V\|_\infty \|q\|_\infty + |m_j^2 - l^2|\,M + \|V\|_\infty\,M\right)
	\end{align*}
	and hence
	\begin{equation}\label{eq:38}
		\|q\|_\infty \le \frac{\frac {2a}l\,M}{1 - \frac {2a}l \|V\|_\infty}\,\left(|m_j^2 - l^2| + \|V\|_\infty \right)
	\end{equation}
	for $l \ge l_0$.
	Furthermore, Lemma \ref{lem:LemmaA} gives
	\begin{align}
		|m_j^2 - l^2| &= |\kappa(l)^2 - l^2| = |\kappa(l) - l|\,|\kappa(l) - l + 2l|
		\nonumber
		\\
		&\le \frac \c l \left(\frac \c l + 2l \right)
		\le \c,
		\label{eq:39}
	\end{align}
	so by the estimate (\ref{eq:38}) we have
	\beq\label{E:q-Linf-est}
	\|q\|_\infty \le M\,\frac \c l \quad \text{for} \quad l \ge l_0.
	\eeq
	
	We see that $M = 0$ would imply that $q \equiv 0$ and also $p_j \equiv 0$,
	which would make $\psi_j$ the trivial solution in contradiction to its definition.
	Therefore $M \neq 0$ and, after a suitable renormalization of $p_j$, we obtain $M=1$. Hence, \eqref{E:q-Linf-est} gives the desired bound for $p_j-1$.
	
	We conclude the proof by estimating $q' = p_j'$. By equation (\ref{eq:32}),
	\begin{align*}
		q'(x) &= \frac 1{2l} \int_0^x \left(-\frac{l-m}{e^{(l-m)a}-1}\,e^{(l-m)(x-t)}
		+ \frac{-l-m}{e^{(-l-m)a}-1}\,e^{(-l-m)(x-t)}\right) r(t)\,d t
		\\
		&\ + \frac 1{2l} \int_x^a \left(-\frac{-l+m}{e^{(-l+m)a}-1}\,e^{(-l+m)(t-x)}
		+ \frac{l+m}{e^{(l+m)a}-1}\,e^{(l+m)(t-x)}\right) r(t)\,d t
	\end{align*}
	and thus, for $l$ large enough,
	\begin{align*}
		&|q'(x)| \le \frac{\|r\|_\infty}{2l} \left(
		\left|\frac{l-m}{e^{(l-m)a}-1}\right|\,\frac{e^{\Real(l-m)x}-1}{\Real (l-m)}
		+ \left|\frac{-l-m}{e^{(-l-m)a}-1}\right|\,\frac{e^{\Real(-l-m)x}-1}{\Real (-l-m)}
		\right. \\
		&\quad\left.
		+ \left|\frac{-l+m}{e^{(-l+m)a}-1}\right|\,\frac{e^{\Real(-l+m)(a-x)}-1}{\Real(-l+m)}
		+ \left|\frac{l+m}{e^{(l+m)a}-1}\right|\,\frac{e^{\Real(l+m)(a-x)}-1}{\Real(l+m)} \right).
	\end{align*}
	Each of the four terms appearing in this sum has the form
	\begin{equation*}
		\left|\frac{\zeta}{e^\zeta-1}\right|\,\frac{e^{\lambda \alpha} - 1}\alpha
		= \frac{\int_0^\lambda e^{\alpha s}\,d s}{\left|\int_0^1 e^{\zeta s}\,d s\right|},
	\end{equation*}
	where $\zeta \in \{\pm(l-m)a, \pm(l+m)a\}$, $\alpha = \Real \zeta$ and either
	$\lambda = \frac x a \in [0, 1]$ or $\lambda = 1 - \frac x a \in [0, 1]$.
	Using the estimates (\ref{eq:34}) and (\ref{eq:35}), we find
	\begin{equation*}
		\left|\frac{\zeta}{e^\zeta-1}\right|\,\frac{e^{\lambda\alpha}-1}\alpha \le 2
	\end{equation*}
	for $l \ge l_0$.
	Therefore, bearing in mind the estimate (\ref{eq:39}) and the normalization \eqref{eq:26},
	\begin{align*}
		\|q'\|_\infty &\le \frac {\|r\|_\infty}{2l}\, 8
		\le \frac 4 l\,(\|V\|_\infty\,(1 + \|q\|_\infty) + |m_j^2 - l^2|\,1)
		\le \frac \c l
	\end{align*}
	for $l \ge l_0$.
\end{proof}

\section{The Resolvent Set}\label{S:resol-set}

To investigate the resolvent set, we need to study the unique solvability of \eqref{1} or, equivalently, \eqref{10tilde}. In the following, we work in the transformed variables $u_1,v,w$.
For ease of notation, in this section we denote the $x_1$-variable simply by $x$. The set $\Sc$ defined next will be shown to lie in the resolvent set. Recall that $k=k_2$ if $N=2$ and $k=(k_2,k_3)$ if $N=3$.

\begin{definition}\label{D:S}
	Let $N=2,3$. Define $\mathcal{S}$ to be the set of all  $\omega \in \C$ which satisfy i)-iv) below. Here we suppress the $\omega$-dependence of all occurring quantities and note that $\perm_\pm(0)$ denotes $\perm_\pm(x=0,\omega)$.
	\begin{enumerate}
		\item[i)] $\omega \in D(\perm)\setminus \Omega$.
		\item[ii)] For every $k\in\R^N$ there exist fundamental systems $\{v^{(1)}_+, v^{(2)}_+\}$ and $\{v^{(1)}_-, v^{(2)}_-\}$ of the homogeneous equation \eqref{3} on $[0,\infty)$ and on $(-\infty,  0]$, respectively, such that, for some (possibly $k$-dependent) $\nu_\pm > 0$ and $C_{v,1,\pm}, C'_{v,1,\pm}, C_{v,2,\pm},$ $ C'_{v,2,\pm}>0$,
		\bea
		\left. \begin{array}{ll} |v^{(1)}_\pm (x)| \le C_{v,1,\pm} e^{\nu_\pm x},& |v^{(1)'}_\pm (x)| \le C'_{v,1,\pm}  e^{\nu_\pm x}, \\
			|v^{(2)}_\pm (x)| \le C_{v,2,\pm} e^{-\nu_{\pm}x},& |v^{(2)'}_\pm (x)| \le C'_{v,2,\pm}  e^{-\nu_{\pm}x}, \end{array} \right\} ~{\rm on}~ \R_{\pm}
		\eea
		and moreover, $v^{(1)}_+ \notin L^2 (\R_+), v^{(2)}_- \notin L^2 (\R_-)$, and
		\beq\label{E:d}
		d(k):= - v^{(2)}_+ (0) v^{(1)'}_- (0) \frac{\omega^2\perm _- (0)}{|k|^2 - \omega^2\perm _- (0)} + v^{(1)}_- (0) v^{(2)'}_+ (0) \frac{\omega^2\perm _+ (0)}{|k|^2 - \omega^2\perm _+ (0)} \neq 0.
		\eeq
		\item[iii)] For every $k$ there exist fundamental systems $\{w^{(1)}_+, w^{(2)}_+\}$ and $\{w^{(1)}_-, w^{(2)}_-\}$ of the homogeneous equation \eqref{4} on $[0,\infty)$ and on $(-\infty,0]$, respectively, such that, for some (possibly $k$-dependent) constants $\widetilde{\nu}_{\pm} > 0$ and $C_{w,1,\pm}, C'_{w,1,\pm},$ $C_{w,2,\pm}, C'_{w,2,\pm}>0$,
		\bea
		\left.
		\begin{array}{ll} |w^{(1)}_\pm (x)| \le C_{w,1,\pm} e^{\widetilde{\nu}_{\pm}x}, & |w^{(1)'}_\pm (x)| \le C'_{w,1,\pm} e^{\widetilde{\nu}_{\pm}x} \\
			|w^{(2)}_\pm (x)| \le C_{w,2,\pm} e^{-\widetilde{\nu}_{\pm}x},& |w^{(2)'}_\pm (x)| \le C'_{w,2,\pm}  e^{-\widetilde{\nu}_{\pm}x} \end{array}
		\right\} ~{\rm on} ~ \R_{\pm}
		\eea
		and moreover, $w^{(1)}_+ \notin L^2 (\R_+), w^{(2)}_- \notin L^2 (\R_-)$ and
		\beq\label{E:dtil}
		\widetilde{d}(k) := - w^{(2)}_+ (0) w^{(1)'}_- (0) + w^{(1)}_- (0) w^{(2)'}_+ (0) \neq 0.
		\eeq
		\item[iv)] With $\tau_\pm$ and $\widetilde{\tau}_\pm$ denoting the Wronski determinants
		\beq\label{E:D_Dtil}
		\tau_\pm := \frac{\omega^2\perm _{\pm}}{|k|^2 - \omega^2\perm _{\pm}} (v^{(1)}_\pm v^{(2)'}_\pm - v^{(1)'}_\pm v^{(2)}_\pm), \quad \widetilde{\tau}_{\pm}:= w^{(1)}_\pm w^{(2)'}_\pm - w^{(1)'}_\pm w^{(2)}_\pm,
		\eeq
		which are constant with respect to $x$ (but usually depend on $k$), the following expressions are bounded by a constant independent of $k$:
		\bea
		\alpha_1^\pm:=& \frac{C_{v,2,\pm} C_{v,1,\pm}}{|\tau_\pm| (1 + |k|^2)\nu_\pm}  , \\
		\alpha_2^+:= &\frac{C_{v,2,+} }{|d| (1 + |k|^2) \sqrt{\nu_+}} \left\{ C_{v,1,-} (1+ \frac{1}{\sqrt{\nu_-}}) + \frac{1}{|\tau_+| (1+|k|^2) \sqrt{\nu_+}} (C_{v,1,-} C'_{v,1,+} + C'_{v,1,-} C_{v,1,+}) C_{v,2,+} \right\},\\
		\alpha_2^-:=&\frac{C_{v,1,-} }{|d| (1 + |k|^2) \sqrt{\nu_-}}  \left\{ C_{v,2,+} (1+\frac{1}{\sqrt{\nu_+}}) + \frac{1}{|\tau_-| (1+|k|^2) \sqrt{\nu_-}} (C_{v,2,-} C'_{v,2,+} + C'_{v,2,-} C_{v,2,+}) C_{v,1,-} \right\},  \\
		\alpha_3^+:=&\frac{1}{(1+|k|^3) \sqrt{\nu_+}}  \left[ \frac{C'_{v,2,+}}{|d|}  \left\{  C_{v,1,-}  (1+\frac{1}{\sqrt{\nu_-}})  + \frac{1}{ |\tau_+| (1+|k|^2)\sqrt{\nu_+}}  (C_{v,1,-} C'_{v,1,+} + C'_{v,1,-}         C_{v,1,+}) C_{v,2,+} \right\}\right. \\
		&\left. +  \frac{C'_{v,2,+} C_{v,1,+} + C'_{v,1,+} C_{v,2,+}}{|\tau_+| \sqrt{\nu_+}} \right] ,  \\
		\alpha_3^-:=&\frac{1}{(1+|k|^3) \sqrt{\nu_-}}  \left[ \frac{C'_{v,1,-}}{|d|}   \left\{  C_{v,2,+}  (1+\frac{1}{\sqrt{\nu_+}})  + \frac{1}{ |\tau_-| (1+|k|^2)\sqrt{\nu_-}}  (C'_{v,2,+} C_{v,2,-} + C'_{v,2,-}         C_{v,2,+}) C_{v,1,-} \right\} \right.\\
		&\left. +  \frac{C'_{v,2,-} C_{v,1,-} + C'_{v,1,-} C_{v,2,-}}{|\tau_-| \sqrt{\nu_-}} \right] , \\ 
		\alpha_4^\pm:=&\frac{C_{w,2,\pm} C_{w,1,\pm} }{|\widetilde{\tau}_{\pm}| \widetilde{\nu}_{\pm} } (1+|k|) ,   \\
		\alpha_5^+:=&\frac{C_{w,2,+} (1+|k|)}{|\widetilde{d}| \sqrt{\widetilde{\nu}_{+} }}   \left[   \frac{ C_{w,1,-}  C'_{w,1,+}  + C'_{w,1,-}   C_{w,1,+} }{|\widetilde{\tau}_+|                    \sqrt{\widetilde{\nu}_+}} C_{w,2,+} + \frac{C_{w,1,-}}{\sqrt{\widetilde{\nu}_-}}   \right] ,\\
		\alpha_5^-:=&\frac{C_{w,1,-} (1+|k|)}{|\widetilde{d}| \sqrt{\widetilde{\nu}_{-} }}   \left[   \frac{ C_{w,2,+}}{\sqrt{\widetilde{\nu}_+}}  + \frac{ C_{w,2,-} C'_{w,2,+} + C'_{w,2,-}  C_{w,2,+} }{|\widetilde{\tau}_-| \sqrt{\widetilde{\nu}_-}}  C_{w,1,-} \right] ,\\
		\alpha_6^+:=&\frac{C'_{w,2,+} }{|\widetilde{d}| \sqrt{\widetilde{\nu}_{+} }}   \left\{   \frac{ C_{w,1,-}  C'_{w,1,+} + C'_{w,1,-}   C_{w,1,+} }{|\widetilde{\tau}_+|                \sqrt{\widetilde{\nu}_+}}    C_{w,2,+} +  \frac{C_{w,1,-}}{\sqrt{\widetilde{\nu}_{-} }}   \right\} +  \frac{ C'_{w,2,+}  C_{w,1,+} + C'_{w,1,+}   C_{w,2,+} }{|\widetilde{\tau}_+| \widetilde{\nu}_+}  ,    \\
		\alpha_6^-:=&\frac{C'_{w,1,-}}{|\widetilde{d}| \sqrt{\widetilde{\nu}_{-} }}   \left\{   \frac{ C_{w,2,+}  }{\sqrt{\widetilde{\nu}_+}}  + \frac{C_{w,2,-} C'_{w,2,+}  + C'_{w,2,-} C_{w,2,+} }{|\widetilde{\tau}_-| \sqrt{\widetilde{\nu}_-}}    C_{w,1,-} \right\} +  \frac{ C'_{w,2,-}  C_{w,1,-} + C'_{w,1,-}   C_{w,2,-} }{|\widetilde{\tau}_-| \widetilde{\nu}_-}.
		\eea
	\end{enumerate}
\end{definition}
Note that in the special cases of homogeneous media, Sec. \ref{S:resolv-hom}, and periodic media, Sec \ref{S:resolv-per}, we can give a much simpler description of the set $\mathcal{S}$.

We next show that the set $\mathcal{S}$ lies in the resolvent set of $\mathcal{L}$.

\begin{thm} \label{T:resolv-set-gen} We have
	$\mathcal{S} \subset \rho (\mathcal{L})$.
\end{thm}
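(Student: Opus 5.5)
Fix $\omega\in\mathcal S$. I will show that for every $r\in\widehat{\cR}$ there is exactly one $u\in\widehat{\cD}_\omega$ solving \eqref{1}, and that $\|u\|_{\widehat{\cD}_\omega}\le C\|r\|_{L^2}$ with $C$ independent of $r$. Since the partial Fourier transform and $\cU$ are unitary, Lemma \ref{L:equiv-op} and Lemma \ref{lem:equiv} reduce this to the transformed problem: for a.e.\ fixed $k\in\R^{N-1}\setminus\{0\}$, solve \eqref{3}, \eqref{4}, \eqref{5} for $(u_1,v,w)$. The solution must satisfy the interface conditions \eqref{E:IFC-uvw}, and I need $k$-uniform $L^2(\R)$ bounds on $u_1,v,v',w,w',|k|w$ and on the second-order quantities. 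Inverse Fourier transform in $k$ then gives a bounded inverse. The constants $\alpha_1^\pm,\dots,\alpha_6^\pm$ in iv) are there precisely to make these per-$k$ bounds uniform.

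\textbf{Step 1: the $w$-problem \eqref{4}.} For fixed $k$, I build the Green's function from the fundamental systems in iii). On $\R_+$ I take the particular solution given by variation of constants, with the decaying solution $w_+^{(2)}$ integrated from $\infty$ and the growing one $w_+^{(1)}$ integrated from $0$; on $\R_-$ I do the symmetric thing. I then add multiples $c_+w_+^{(2)}$ on $\R_+$ and $c_-w_-^{(1)}$ on $\R_-$. The conditions $w_+^{(1)}\notin L^2(\R_+)$ and $w_-^{(2)}\notin L^2(\R_-)$ force this decaying choice. Imposing $\llbracket w\rrbracket=\llbracket w'\rrbracket=0$ gives a $2\times2$ system whose determinant is $\widetilde d(k)\neq0$, so $c_\pm$ are uniquely determined. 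This also gives uniqueness: a homogeneous $L^2$ solution must be a multiple of the decaying solution on each side, and $\widetilde d\neq0$ forces both multiples to vanish.

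To estimate, I use the exponential bounds and Schur's test (Young's inequality with the kernel $e^{-\widetilde\nu_\pm|x-t|}$, whose $L^1$ norm is of order $1/\widetilde\nu_\pm$). For the boundary coefficients $c_\pm$ I use Cauchy--Schwarz, which produces factors $1/\sqrt{\widetilde\nu}$. This yields $\|(1+|k|)w\|\le(\alpha_4^\pm+\alpha_5^\pm)\|r\|$ and $\|w'\|\le\alpha_6^\pm\|r\|$. The quantity $w''$ then comes from the equation itself, since $\omega^2\perm$ is bounded.

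\textbf{Step 2: the $v$-problem \eqref{3}.} I treat it the same way, with $P_v=\perm/(|k|^2-\omega^2\perm)$, using the Wronskian $\tau_\pm$ and the fundamental systems from ii). Condition i) guarantees that $|k|^2-\omega^2\perm$ is bounded away from $0$ uniformly in $x$ and $k$, so $P_v$ and the right-hand side coefficients are controlled. There are two jump conditions:
\begin{itemize}
\item $\llbracket v\rrbracket=0$;
\item $\llbracket v'-\ri|k|u_1\rrbracket=0$. Using \eqref{5}, this becomes $\llbracket \frac{\omega^2\perm}{|k|^2-\omega^2\perm}v'\rrbracket=\llbracket\frac{\ri |k|}{|k|^2-\omega^2\perm}r_1\rrbracket$-type data.
\end{itemize}
The determinant of the resulting $2\times 2$ system is exactly $d(k)$, so the coefficients are uniquely solvable. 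The remaining condition $\llbracket\perm u_1\rrbracket=0$ and the divergence relation \eqref{E:divergence-uvw} should follow from \eqref{3}, \eqref{5} and $r\in\widehat{\cR}$. I will check this by integrating the divergence relation, using the fact that $r_1$ is continuous across the interface because $r_1'=-\ri k\cdot r_\parallel\in L^2$.

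The $r_1'$ term was already eliminated in \eqref{3}, but the source $\frac{\ri|k|\perm'}{(|k|^2-\omega^2\perm)^2}r_1$ carries $\perm'$, which is bounded by \eqref{E:ass-perm}. The estimates then give $\|(1+|k|^2)v\|$ and $\|(1+|k|^3)v'/|k|\|$-type bounds in terms of $\alpha_1,\alpha_2,\alpha_3$. From these, \eqref{5} yields bounds on $u_1$, $v'-\ri|k|u_1$ and $\ri|k|v'+|k|^2u_1$, and the equation yields the second-order quantities.

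\textbf{Main obstacle.} I expect the hard part to be the bookkeeping that turns each piece of the Green's-function representation into one of the $\alpha_j^\pm$ with the correct powers of $(1+|k|)$. In particular, the domain norm of $\widetilde{\cD}_\omega$ requires $|k|^2u_1$ and $|k|w$ with growth in $k$. My plan is to estimate the three contributions separately and match each to the corresponding term in iv): the interior convolution term, the boundary-correction term carrying $1/|d|$ or $1/|\widetilde d|$, and the derivative terms (primed constants). The uniform boundedness of the $\alpha$'s then gives a $k$-independent constant, and Plancherel finishes the proof.
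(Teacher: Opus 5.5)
Your architecture matches the paper's proof: variation of constants with the fundamental systems from ii) and iii), addition of the $L^2$ homogeneous solutions on each side, a $2\times2$ interface system with determinant $d$ resp.\ $\widetilde d$, and Young/Cauchy--Schwarz estimates matched to the $\alpha_j^\pm$. Step 1 is fine as stated. Imposing $\llbracket v'-\ri|k|u_1\rrbracket=0$ instead of the paper's $\llbracket\perm u_1\rrbracket=0$ is also harmless, and the determinant is still $d$. However, $\llbracket\perm u_1\rrbracket=0$ then follows from the identity $\ri|k|(v'-\ri|k|u_1)=\omega^2\perm u_1+r_1$, obtained from \eqref{5}, together with $\llbracket r_1\rrbracket=0$. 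It does not come from integrating the divergence relation, which only holds on each half-line.

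The genuine problem is the $k$-weight bookkeeping in Step 2; as written it would fail.
\begin{itemize}
\item The factors $1+|k|^2$ in $\alpha_1^\pm,\alpha_2^\pm$ and $1+|k|^3$ in $\alpha_3^\pm$ are not weights you gain on the solution. They absorb the smallness of the source, $\|s(\cdot,k)\|\le c(1+|k|^2)^{-1}\|r(\cdot,k)\|$, plus a weight $(1+|k|)^{-1}$ on $v'$. Boundedness of the $\alpha$'s therefore gives only $\|v\|\le c\|r\|$ and $\|v'/(1+|k|)\|\le c\|r\|$.
\item Bounds on $(1+|k|^2)v$, on $(1+|k|^3)v'/|k|$, on unweighted $v'$ (as in your opening list), or on $|k|^2u_1$ would need $(1+|k|^2)\alpha_j^\pm$ and the like to be bounded. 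Hypothesis iv) does not assert this: for periodic media with $\perm_+(0)+\perm_-(0)=0$ one only has $\alpha_2^+\sim|k|^{-1/2}$.
\item These stronger bounds are also not needed. $\widetilde{\cD}_\omega$ never requires $|k|^2u_1$ or $v'$ separately, only the combinations $v'-\ri|k|u_1=-\frac{\omega^2\perm}{|k|^2-\omega^2\perm}v'-\frac{\ri|k|r_1}{|k|^2-\omega^2\perm}$, $\ri|k|v'+|k|^2u_1=-\frac{\ri|k|\omega^2\perm}{|k|^2-\omega^2\perm}v'+\frac{|k|^2r_1}{|k|^2-\omega^2\perm}$ and $v''-\ri|k|u_1'=-\omega^2\perm v-|k|^{-1}(k_2r_2+k_3r_3)$.
\item All three combinations are controlled by $\|v\|$ and $\|v'/(1+|k|)\|$. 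This uses that i) actually gives $\bigl||k|^2-\omega^2\perm\bigr|\ge c(1+|k|^2)$, which is more than the mere non-vanishing you quote.
\end{itemize}
Finally, bounding the interface coefficients requires the quantitative trace estimate $|r_1(0,k)|\le C_0\sqrt{1+|k|^2}\,\|r(\cdot,k)\|$, not just continuity of $r_1$. This estimate comes from $r_1'=-\ri(k_2r_2+k_3r_3)$ and $H^1(\R)\hookrightarrow C_b(\R)$. Combined with the $O\bigl((1+|k|^3)^{-1}\bigr)$ prefactor of $r_1(0)$ in the jump data, it gives the $r_1(0)$-contribution that $\alpha_2^\pm$ is built to absorb.
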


\bpf
Let $\omega \in \mathcal{S}$, i.e., i), ii), iii), iv) in Definition \ref{D:S} hold true. By Lemmas \ref{L:equiv-op} and \ref{lem:equiv}, we have to prove that the inhomogeneous system \eqref{3}, \eqref{4} has a unique solution $(v,w)$ such that $(u_1, v, w) \in \widetilde{\mathcal{D}}_\omega$ with $u_1$ defined by \eqref{5} and that $\|(u_1,v,w)\|_{L^2}\leq c \|r\|_{L^2}$ with $c$ independent of $r$. The case $k=0$ can be left out because only $L^2$-estimates with respect to $k$ are required and the set $\{(x_1,k) \text{ with } k=0\}$ has measure $0$ in $\R^N$. Hence, we assume $k\neq 0$.
Let $s$ denote the right-hand side of \eqref{3}. With $\{v^{(1)}_+, v^{(2)}_+\}$ and $\{v^{(1)}_-, v^{(2)}_-\}$ denoting fundamental systems for \eqref{3} given by ii), we first recall that the Wronski determinants $\tau_\pm$ defined in \eqref{E:D_Dtil} are constant  in $x$ and depend only on $k$.

By the variation of constants formula we get a particular solution $v^{{\rm part}}$ of \eqref{3} by
\be\label{14}
\begin{array}{l}v^{{\rm part}}_{+} (x) = - \frac{v^{(2)}_+ (x)}{\tau_+} \int\limits_0^x v^{(1)}_+ (t) s (t) dt - \frac{v^{(1)}_+ (x)}{\tau_+} \int\limits_x^{\infty} v^{(2)}_+ (t) s (t) dt \quad (x > 0), \\
	v^{{\rm part}}_{-} (x)=	- \frac{v^{(2)}_- (x)}{\tau_-} \int\limits_{-\infty}^x v^{(1)}_- (t) s (t) dt - \frac{v^{(1)}_- (x)}{\tau_-} \int\limits_x^0 v^{(2)}_- (t) s (t) dt \quad (x < 0). \end{array}  
\ee
By ii)
$$
\begin{array}{lcl}
	|v^{{\rm part}}_{+} (x)| &\le & \frac{C_{v,2,+} C_{v,1,+}}{|\tau_+|}  \left[ e^{-\nu_+ x}  \int\limits_0^x e^{\nu_+ t} |s (t)| dt + e^{\nu_+ x}  \int\limits_x^{\infty}  e^{-\nu_+ t} |s (t) | dt \right] \quad (x > 0), \\
	|v^{{\rm part}}_{-} (x)| &\le &	\frac{C_{v,2,-} C_{v,1,-}}{|\tau_-|}  \left[ e^{-\nu_- x}  \int\limits_{-\infty}^x e^{\nu_- t} |s (t)| dt + e^{\nu_- x}  \int\limits_x^0  e^{-\nu_- t} |s (t) | dt \right] \quad (x < 0),
\end{array}
$$
whence Lemma 4.7 of \cite{BDPW24} (and its natural extension to $\R_-$) gives
\be\label{15}
v^{{\rm part}}_{\pm} (\cdot,k) \in L^2 (\R_{\pm}), ~ \| v^{{\rm part}}_{\pm} (\cdot,k) \|_{L^2 (\R_{\pm})} \le \frac{2 C_{v,2,\pm} C_{v,1,\pm}}{|\tau_\pm| \nu_{\pm}}  \| s (\cdot,k)\|_{L^2 (\R_{\pm})}.
\ee

From now on $c>0$ denotes a generic $k$-independent constant.

By \eqref{3} we have
\be\label{16}
\| s (\cdot,k)\|_{L^2 (\R_{\pm})}  \le \frac{c}{1+|k|^2} \left\|  r (\cdot,k)  \right\|_{L^2 (\R_{\pm})^3} ~{\rm for~a.e.}~ k \in \R^{N-1}.
\ee
Hence, the boundedness of $\alpha_1^\pm$ in assumption iv) gives
\be\label{17}
v^{{\rm part}}_{\pm} \in L^2 (\R_{\pm}^N) , ~ \| v^{{\rm part}}_{\pm}\|_{L^2 (\R_{\pm}^N)} \le c \left\|  r \right\|_{L^2 (\R_{\pm}^N)^3} .
\ee
Since $v^{(1)}_+ \notin L^2 (\R_+), v^{(2)}_- \notin L^2 (\R_-)$ by ii), the general solution in $L^2 (\R_{\pm})$ of equation \eqref{3} is given by
\be\label{18}
\begin{array}{ll}  
	v_{+} (x) = A_+ v^{(2)}_+ (x) + v_+^{{\rm part}} (x) & (x >0),\\
	v_{-} (x)= A_- v^{(1)}_- (x) + v_-^{{\rm part}} (x) & (x <0), \end{array} 
\ee
with ($k$-dependent) constants $A_+, A_- \in \C$. The condition $\llbracket v \rrbracket = 0$ in $\widetilde{\mathcal{D}}_\omega$ requires $v_+ (0) = v_- (0)$, i.e.
\ben\label{19}
A_+ v^{(2)}_+ (0) - A_- v^{(1)}_- (0)& =& v^{{\rm part}}_- (0) - v^{{\rm part}}_+ (0) \nonumber\\
& = &\frac{v^{(1)}_+ (0)}{\tau_+}  \int\limits_0^{\infty} v^{(2)}_+ (t) s (t) dt - \frac{v^{(2)}_- (0)}{\tau_-}  \int\limits_{-\infty}^0 v^{(1)}_- (t) s (t) dt.
\een
Before analyzing this further, we first compute, using \eqref{18} and \eqref{14},
\be\label{20}
\begin{array}{l}  
	v'_{+} (x) = A_+ v^{(2)'}_+ (x) - \frac{v^{(2)'}_+ (x)}{\tau_+}  \int\limits_0^x v^{(1)}_+ (t) s (t) dt - \frac{v^{(1)'}_+ (x)}{\tau_+}  \int\limits_x^{\infty} v^{(2)}_+ (t) s (t) dt ~ (x>0), \\
	v'_{-} (x) = 	A_- v^{(1)'}_- (x) - \frac{v^{(2)'}_- (x)}{\tau_-}  \int\limits_{-\infty}^x v^{(1)}_- (t) s (t) dt - \frac{v^{(1)'}_- (x)}{\tau_-}  \int\limits_x^0 v^{(2)}_- (t) s (t) dt~ (x<0),
\end{array} 
\ee
which, by \eqref{5}, gives
\be\label{21}
\begin{array}{l}  
	u_{1,+} (x) = \frac{1}{|k|^2 - \omega^2\perm _+(x)}  \left\{ r_1 (x) - \ri |k|  \left[  A_+ v^{(2)'}_+ (x) - \frac{v^{(2)'}_+ (x)}{\tau_+}  \int\limits_0^x v^{(1)}_+ (t) s (t) dt
	- \frac{v^{(1)'}_+ (x)}{\tau_+}  \int\limits_x^{\infty} v^{(2)}_+ (t) s (t)  dt  \right] \right\} ~ (x>0),  \\
	u_{1,-} (x) = \frac{1}{|k|^2 - \omega^2\perm _-(x)}  \left\{ r_1 (x) - \ri |k|  \left[  A_- v^{(1)'}_- (x) - \frac{v^{(2)'}_- (x)}{\tau_-}  \int\limits_{-\infty}^x v^{(1)}_- (t) s (t) dt
	- \frac{v^{(1)'}_- (x)}{\tau_-}  \int\limits_x^0 v^{(2)}_- (t) s (t)  dt  \right] \right\} ~ (x<0).   \end{array} 
\ee
The condition $\llbracket\perm  u_1\rrbracket = 0$ in $\widetilde{\mathcal{D}}_\omega$ therefore requires
$$
\begin{aligned}
	&&\frac{\perm _+ (0)}{|k|^2 - \omega^2\perm _+ (0)} \left\{ r_1 (0) - \ri |k| \left[ A_+ v^{(2)'}_+ (0) - \frac{v^{(1)'}_+ (0)}{\tau_+}  \int\limits_0^{\infty} v^{(2)}_+ (t) s (t)  dt  \right] \right\} \\
	&&= \frac{\perm _- (0)}{|k|^2 - \omega^2\perm _- (0)} \left\{ r_1 (0) - \ri |k| \left[ A_- v^{(1)'}_- (0) - \frac{v^{(2)'}_- (0)}{\tau_-}   \int\limits_{-\infty}^0 v^{(1)}_- (t) s (t)  dt  \right] \right\},
\end{aligned}
$$
i.e.,
\beq\label{22}
\begin{aligned}
	&&\frac{\perm _+ (0)}{|k|^2 - \omega^2\perm _+ (0)}  v^{(2)'}_+ (0) A_+ - \frac{\perm _- (0)}{|k|^2 - \omega^2\perm _- (0)}   v^{(1)'}_- (0) A_-\ =\ \left[  \frac{\perm _+ (0)}{|k|^2 - \omega^2\perm _+ (0)} - \frac{\perm _- (0)}{|k|^2 - \omega^2\perm _- (0)} \right] \frac{r_1 (0)}{\ri |k|} \\
	&&+ \frac{\perm _+ (0)}{|k|^2 - \omega^2\perm _+ (0)} \frac{v^{(1)'}_+ (0)}{\tau_+}   \int\limits_0^{\infty}  v^{(2)}_+ (t) s (t)  dt   -  \frac{\perm _- (0)}{|k|^2 - \omega^2\perm _- (0)} \frac{v^{(2)'}_- (0)}{\tau_-}   \int\limits_{-\infty}^0  v^{(1)}_- (t) s (t)  dt  .
\end{aligned}
\eeq
Equations \eqref{19} and \eqref{22} together form a $2\times 2$-system for $(A_+, A_-)^\top$, the determinant of which is
\be\label{23}
- v^{(2)}_+ (0) v^{(1)'}_- (0) \frac{\perm _- (0)}{|k|^2 - \omega^2\perm _- (0)} +  v^{(1)}_- (0) v^{(2)'}_+ (0) \frac{\perm _+ (0)}{|k|^2 - \omega^2\perm _+ (0)} = \frac{d}{\omega^2}
\ee
with $d$ defined in \eqref{E:d}. The determinant is non-zero by assumption ii). Thus, $A_+$ and $A_-$ are uniquely given by
\ben\label{24}
A_+ &=&  - \frac{1}{d} \frac{\omega^2\perm _- (0)}{|k|^2 - \omega^2\perm _- (0)}  v^{(1)'}_- (0)  \left[ \frac{v^{(1)}_+ (0)}{\tau_+} \int\limits_0^{\infty} v^{(2)}_+ (t) s (t)  dt - \frac{v^{(2)}_-(0)}{\tau_-} \int\limits_{-\infty}^0 v^{(1)}_- (t) s (t)  dt \right]  \nonumber\\
&&+ \frac{1}{d}  v^{(1)}_- (0)  \Bigg\{  \left[ \frac{\omega^2\perm _+ (0)}{|k|^2 - \omega^2\perm _+ (0)} -  \frac{\omega^2\perm _- (0)}{|k|^2 - \omega^2\perm _- (0)}   \right] \frac{r_1 (0)}{\ri |k|}   \nonumber\\
&& + \frac{\omega^2\perm _+ (0)}{|k|^2 - \omega^2\perm _+ (0)}  \frac{v^{(1)'}_+ (0)}{\tau_+}  \int\limits_0^{\infty} v^{(2)}_+ (t) s (t)  dt -  \frac{\omega^2\perm _- (0)}{|k|^2 - \omega^2\perm _- (0)}   \frac{v^{(2)'}_- (0)}{\tau_-} \int\limits_{-\infty}^0 v^{(1)}_- (t) s (t)  dt \Bigg\} \nonumber\\
&=& \frac{1}{d}  \frac{|k| (\omega^2\perm _+ (0) - \omega^2\perm _- (0))}{(|k|^2 - \omega^2\perm _+ (0))(|k|^2 - \omega^2\perm _- (0))}  \cdot \frac{1}{\ri} v^{(1)}_- (0) r_1 (0)  \nonumber\\
&&+ \frac{1}{d\tau_+} \left[ \frac{\omega^2\perm _+ (0)}{|k|^2 - \omega^2\perm _+ (0)} v^{(1)}_- (0) v^{(1)'}_+(0) -  \frac{\omega^2\perm _- (0)}{|k|^2 - \omega^2\perm_-(0)} v^{(1)'}_- (0) v^{(1)}_+  (0)  \right] \int\limits_0^{\infty} v^{(2)}_+ (t) s (t)  dt \\
&&+ \frac{1}{d\tau_-} \underbrace{ \frac{\omega^2\perm _- (0)}{|k|^2 - \omega^2\perm _- (0)}  \left[ v^{(1)'}_- (0) v^{(2)}_- (0) - v^{(1)}_- (0) v^{(2)'}_- (0) \right] }_{= - \tau_-}   \int\limits_{-\infty}^0 v^{(1)}_- (t) s (t)  dt ,\nonumber
\een
\ben\label{25}
A_- &=&  - \frac{1}{d} \frac{\omega^2\perm _+ (0)}{|k|^2 - \omega^2\perm _+ (0)}  v^{(2)'}_+ (0)  \left[ \frac{v^{(1)}_+ (0)}{\tau_+} \int\limits_0^{\infty} v^{(2)}_+ (t) s (t)  dt - \frac{v^{(2)}_-(0)}{\tau_-} \int\limits_{-\infty}^0 v^{(1)}_- (t) s (t)  dt \right]  \nonumber\\
&&+ \frac{1}{d}  v^{(2)}_+ (0)  \Bigg\{  \left[ \frac{\omega^2\perm _+ (0)}{|k|^2 - \omega^2\perm _+ (0)} -  \frac{\omega^2\perm _- (0)}{|k|^2 - \omega^2\perm _- (0)}   \right] \frac{r_1 (0)}{\ri |k|}  \nonumber \\
&& + \frac{\omega^2\perm _+ (0)}{|k|^2 - \omega^2\perm _+ (0)}  \frac{v^{(1)'}_+ (0)}{\tau_+}  \int\limits_0^{\infty} v^{(2)}_+ (t) s (t)  dt -  \frac{\omega^2\perm _- (0)}{|k|^2 - \omega^2\perm _- (0)}   \frac{v^{(2)'}_- (0)}{\tau_-} \int\limits_{-\infty}^0 v^{(1)}_- (t) s (t)  dt \Bigg\} \nonumber\\
&=& \frac{1}{d}  \frac{|k| (\omega^2\perm _+ (0) - \omega^2\perm _- (0))}{(|k|^2 - \omega^2\perm _+ (0))(|k|^2 - \omega^2\perm _- (0))}  \cdot \frac{1}{\ri} v^{(2)}_+ (0) r_1 (0)  \nonumber\\
&&+ \frac{1}{d\tau_+} \underbrace{ \frac{\omega^2\perm _+ (0)}{|k|^2 - \omega^2\perm _+ (0)} \left[ - v^{(2)'}_+ (0) v^{(1)}_+(0) + v^{(2)}_+ (0) v^{(1)'}_+ (0) \right]}_{= - \tau_+}   \int\limits_0^{\infty} v^{(2)}_+ (t) s (t)  dt \\
&&+ \frac{1}{d\tau_-}   \left[ \frac{\omega^2\perm _+ (0)}{|k|^2 - \omega^2\perm _+ (0)}  v^{(2)'}_+ (0) v^{(2)}_- (0) - \frac{\omega^2\perm _- (0) }{|k|^2 - \omega^2\perm _- (0)}  v^{(2)}_+ (0) v^{(2)'}_- (0)  \right]   \int\limits_{-\infty}^0 v^{(1)}_- (t) s (t)  dt.\nonumber
\een
Next we bound $A_+, A_-$ with respect to their $k$-dependency. First, using the embedding $H^1 (\R) \hookrightarrow C_b (\R)$ (with embedding constant $C_0$) and that $r_1'=-\ri k_2r_2$ for $N=2$ and $r'_1= - \ri (k_2r_2 + k_3r_3)$ for $N=3$, we find
\ben\label{26}
|r_1 (0,k)| &\le& C_0 \| r_1(\cdot,k) \|_{H^1(\R)}  = C_0 \sqrt{\| r_1(\cdot,k) \|^2_{L^2(\R)} + \| r'_1(\cdot,k) \|^2_{L^2(\R)} }\nonumber\\
&\le& C_0 \sqrt{\| r_1(\cdot,k) \|^2_{L^2(\R)} + |k|^2 ( \| r_2(\cdot,k) \|^2_{L^2(\R)} + \| r_3(\cdot,k) \|^2_{L^2(\R)} ) }  \nonumber\\
&\le& C_0 \sqrt{1+|k|^2} \|r(\cdot,k)\|_{L^2(\R)^3}.
\een

Furthermore, using the assumptions ii), and \eqref{16}, we obtain
\ben\label{27}
\left| \int\limits_0^{\infty}  v^{(2)}_+ (t) s (t) dt \right| &\le& C_{v,2,+} \int\limits_0^{\infty} e^{- \nu_+t} | s (t)| dt \le C_{v,2,+} \frac{1}{\sqrt{2 \nu_+}} \| s \|_{L^2 (\R)}  \nonumber\\
&\le& C_{v,2,+} c  \frac{1}{\sqrt{2 \nu_+}} \frac{1}{1+|k|^2} \|r(\cdot,k)\|_{L^2(\R)^3}   ,
\een
and analogously,
\be\label{28}
\left| \int\limits_{-\infty}^0  v^{(1)}_- (t) s (t) dt \right| \le C_{v,1,-}c \frac{1}{\sqrt{2 \nu_-}}  \frac{1}{1+|k|^2} \|r(\cdot,k)\|_{L^2(\R)^3}  .
\ee
Using assumptions i), ii), and \eqref{24} - \eqref{28}, we find
\ben\label{29}
|A_+ (k) | &\leq & \left\{ \frac{1}{|d|}  \frac{c}{1+|k|^3} C_{v,1,-} C_0 \sqrt{1+|k|^2}  \right. \nonumber\\
&&+ \frac{1}{|d| |\tau_+|}   \left[\frac{c}{1+|k|^2}  \left( C_{v,1,-} C'_{v,1,+}  +  C'_{v,1,-}  C_{v,1,+}  \right) \right] \frac{C_{v,2,+}c}{\sqrt{2 \nu_+ } (1+|k|^2)} \nonumber\\
&&\left. + \frac{1}{d}  \frac{C_{v,1,-} c}{\sqrt{2 \nu_- } (1+|k|^2)} \right\}    \|r\|_{L^2(\R)^3} \nonumber \\
&\le& \frac{c}{|d| (1+|k|^2)}  \left\{ C_{v,1,-} +  \frac{C_{v,1,-} C'_{v,1,+}  +  C'_{v,1,-}  C_{v,1,+}}{|\tau_+| (1+|k|^2)}  \frac{ C_{v,2,+} }{\sqrt{\nu_+} }    +   \frac{C_{v,1,-}}{\sqrt{\nu_- }}   \right\}    \|r(\cdot,k)\|_{L^2(\R)^3}.
\een
Analogously
\ben\label{30}
|A_- (k) | &\leq & \frac{c}{|d| (1+|k |^2)} \left\{ C_{v,2,+} +   \frac{C_{v,2,+}}{\sqrt{\nu_+}} + \frac{C'_{v,2,+}  C_{v,2,-}  +  C_{v,2,+}  C'_{v,2,-}}{|\tau_-| (1+|k|^2)} \frac{C_{v,1,-}}{\sqrt{\nu_-}} \right\} \|r(\cdot,k)\|_{L^2(\R)^3}.
\een
Consequently,
\ben\label{31}
\| A_+(k) v^{(2)}_+\|_{L^2 (\R_+)} &\le&   | A_+(k)|C_{v,2,+}  \| e^{-\nu_+\cdot} \|_{L^2 (\R_+)} = |A_+(k)|    \frac{C_{v,2,+}}{ \sqrt{2\nu_+}} \nonumber\\
&\hspace{-155pt}\le& \hspace{-85pt}\frac{c C_{v,2,+}}{|d| (1+|k |^2) \sqrt{\nu_+}} \left\{ C_{v,1,-} \left(1+\frac{1}{\sqrt{\nu_-}} \right) + \frac{\left( C_{v,1,-}  C'_{v,1,+}  +  C'_{v,1,-}  C_{v,1,+}  \right) C_{v,2,+}}{|\tau_+| (1+|k|^2)  \sqrt{\nu_+}}   \right\}    \|r(\cdot,k)\|_{L^2(\R)^3}.
\een

Since, by assumption iv), the factor $c \alpha_2^+$ in \eqref{31} is a bounded function of $k$, we obtain some global constant $c$ such that
\be\label{32}
A_+ v^{(2)}_+ \in L^2 (\R_+^N), \| A_+ v^{(2)}_+ \|_{L^2 (\R_+^N)} \le  c \|r\|_{L^2(\R^N)^3}.
\ee
Analogously,
\be\label{33}
A_- v^{(1)}_- \in L^2 (\R_-^N), \| A_- v^{(1)}_- \|_{L^2 (\R_-^N)} \le  c \|r\|_{L^2(\R^N)^3}.
\ee
Together with \eqref{17}, \eqref{18}, we find
\be\label{34}
v \in L^2 (\R^N), \| v\|_{L^2 (\R^N)} \le  c \|r\|_{L^2(\R^N)^3}.
\ee

Next we show in an analogous way that also
\be\label{35}
\frac{v'}{1+|k|}  \in  L^2 (\R^N), \left\| \frac{v'}{1+|k|}   \right\|_{L^2 (\R^N)} \le  c \|r\|_{L^2(\R^N)^3}.
\ee
Indeed, using \eqref{20} and assumptions ii) we obtain
\bea
|v_+' (x)| &\le& | A_+ v^{(2)'}_+ (x) |+ \frac{C'_{v,2,+} C_{v,1,+}}{|\tau_+|} e^{-\nu_+ x} \int\limits_0^x   e^{\nu_+ t}  | s (t)| dt +             \frac{C'_{v,1,+} C_{v,2,+}}{|\tau_+|} e^{\nu_+ x} \int\limits_x^{\infty}   e^{-\nu_+ t}  | s (t)| dt \quad (x>0),
\eea
and hence, by Lemma 4.7 in \cite{BDPW24},
\bea
\|v'_+ \|_{L^2 (\R_+)}  \le | A_+ | C'_{v,2,+} \| e^{-\nu_+  \cdot} \|_{L^2 (\R_+)} +  \frac{C'_{v,2,+} C_{v,1,+} + C'_{v,1,+} C_{v,2,+} }{|\tau_+| \nu_+ } \| s \|_{L^2 (\R_+)}.
\eea
Using \eqref{29} and \eqref{16}, we conclude
\bea
\|v'_+ \|_{L^2 (\R_+)}  &\le& \left[ \frac{c~C'_{v,2,+}}{|d| (1+|k|^2) \sqrt{\nu_+ }} \left\{ C_{v,1,-}  \left(1+ \frac{1}{\sqrt{\nu_-}}\right)  +  \frac{( C_{v,1,-} C'_{v,1,+}  + C'_{v,1,-} C_{v,1,+} )    C_{v,2,+}} {|\tau_+| (1+|k|^2) \sqrt{\nu_+ }}\right\} \right.\\
&& \left.  + \frac{C'_{v,2,+} C_{v,1,+}  + C'_{v,1,+} C_{v,2,+}}{|\tau_+| \nu_+ }  \frac{c}{ 1+|k|^2} \right]   \|r(\cdot,k)\|_{L^2(\R_+)^3}  ,
\eea
which gives
\bea
\frac{1}{ 1+|k|}  \|v'_+ \|_{L^2 (\R_+)}  &\le&  \frac{c}{ (1+|k|^3) \sqrt{\nu_+ }} \left[ \frac{C'_{v,2,+}}{|d|}  \bigg\{  C_{v,1,-} \left(1+  \frac{1}{ \sqrt{\nu_-}}\right) +  \right. \ \\
&& \left. + \frac{ (C_{v,1,-} C'_{v,1,+}  + C'_{v,1,-} C_{v,1,+} ) C_{v,2,+}}{|\tau_+| (1+|k|^2) \sqrt{\nu_+ }}  \bigg\}  + \frac{C'_{v,2,+} C_{v,1,+}  + C'_{v,1,+} C_{v,2,+}}{|\tau_+| \sqrt{\nu_+ }} \right]      \|r(\cdot,k)\|_{L^2(\R_+)^3}.
\eea
Since the factor $c \alpha_3^+ $ on the right-hand side is bounded independently of $k$ by assumption iv), we conclude
\be\label{36}
\frac{1}{ 1+|k|}  v'_+  \in L^2 (\R_+^N), \qquad   \left\| \frac{1}{1+|k|}  v'_+  \right\|_{L^2(\R^N_+)}   \le  c   \|r\|_{L^2(\R_+^N)^3} .
\ee
Analogously, \eqref{20}, Lemma 4.7 in \cite{BDPW24}, \eqref{30} and \eqref{16} imply
\bea
|v'_- (x)| &\le& | A_- v^{(1)'}_- (x) |  + \frac{C'_{v,2,-} C_{v,1,-}}{|\tau_-|}   e^{-\nu_- x} \int\limits_{-\infty}^x   e^{\nu_- t}  | s (t)| dt \\ 
&& +   \frac{C'_{v,1,-} C_{v,2,-}}{|\tau_-|} e^{\nu_- x} \int\limits_x^0   e^{-\nu_- t}  | s (t)| dt \quad (x<0), \\
\|v'_- \|_{L^2 (\R_-)} &\le& | A_- | C'_{v,1,-} \| e^{\nu_- \cdot} \|_{L^2 (\R_-)}  + \frac{C'_{v,2,-} C_{v,1,-} + C'_{v,1,-} C_{v,2,-} }{|\tau_-| \nu_-}  \| s \|_{L^2 (\R_-)} \\
&\le&  \left[ \frac{c~C'_{v,1,-}}{|d| (1+|k|^2) \sqrt{ \nu_-}}   \left\{  C_{v,2,+}  \left( 1+\frac{1}{\sqrt{ \nu_+}} \right)  + \frac{( C'_{v,2,+} C_{v,2,-} + C'_{v,2,-} C_{v,2,+}) C_{v,1,-} }{|\tau_-| (1+|k|^2) \sqrt{ \nu_-}} \right\} \right. \\
&&+  \left. \frac{C'_{v,2,-} C_{v,1,-}  + C'_{v,1,-} C_{v,2,-} }{|\tau_-| \nu_-}  \frac{c}{1+|k|^2}  \right]  \|r(\cdot,k)\|_{L^2(\R_-)^3} , \\
\frac{1}{1+|k|}  \|v'_- \|_{L^2 (\R_-)}   &\le& \frac{c}{(1+|k|^3) \sqrt{\nu_-}} \left[  \frac{C'_{v,1,-}}{|d|}  \left\{ C_{v,2,+}  \left( 1 + \frac{1}{\sqrt{\nu_+}} \right) +  \frac{( C'_{v,2,+} C_{v,2,-} + C'_{v,2,-} C_{v,2,+}) C_{v,1,-} }{|\tau_-| (1+|k|^2) \sqrt{ \nu_-}}  \right\}  \right. \\
&&+ \left. \frac{C'_{v,2,-} C_{v,1,-} + C'_{v,1,-} C_{v^{(2)}_- } }{|\tau_-| \sqrt{\nu_-}}  \right]  \|r(\cdot,k)\|_{L^2(\R_-)^3} .
\eea

The factor $c \alpha_3^-$  on the right-hand side is bounded independently of $k$ by assumption iv), and hence
\bea
\frac{1}{ 1+|k|}  v'_-  \in L^2 (\R_-^N),    \left\| \frac{1}{1+|k|}  v'_-  \right\|_{L^2(\R^N_-)}   \le c    \|r\|_{L^2(\R_-^N)^3} ,
\eea
which together with \eqref{36} gives \eqref{35}.\\

Now we show the remaining properties of $u_1$ and $v$ required in the definition of $\mathcal{\widetilde{D}}_\omega$.\\

By \eqref{5} and \eqref{35} (and assumption i)), we obtain
\be\label{37}
u_1  \in L^2 (\R^N), ~ \| u_1\|_{L^2 (\R^N)} \le c  \|r\|_{L^2(\R^N)^3} .
\ee
Rearranging \eqref{5} gives
\be\label{38}
v' - \ri |k| u_1  = - \frac{\omega^2\perm }{|k|^2 - \omega^2\perm } v' - \frac{\ri |k|}{|k|^2 - \omega^2\perm } r_1 ,
\ee
and
\bea
\ri |k| v' + |k|^2 u_1  = - \frac{\ri |k| \omega^2\perm }{|k|^2 - \omega^2\perm } v' + \frac{|k|^2}{|k|^2 - \omega^2\perm } r_1
\eea
which are both in $L^2 (\R^N)$ by \eqref{35}. Furthermore, \eqref{38}, \eqref{3}, and $r'_1 + \ri k_2 r_2 + \ri k_3 r_3 = 0$ imply for $N=3$
\bea
v'' - \ri |k| u'_1 &=& - \left(  \frac{\omega^2\perm }{|k|^2 - \omega^2\perm } v'  \right)' - \frac{\ri |k|}{|k|^2 - \omega^2\perm } r'_1 -  \frac{\ri |k|\omega^2\perm '}{(|k|^2 - \omega^2\perm )^2} r_1 \\
&=& - \omega^2\perm v +   \frac{\omega^2\perm }{|k| (|k|^2 - \omega^2\perm )} (k_2r_2 + k_3 r_3) -  \frac{|k|}{|k|^2 - \omega^2\perm }  (k_2r_2 + k_3 r_3) \\
&=&   - \omega^2\perm v -   \frac{1}{|k|} (k_2r_2 + k_3 r_3)
\eea
which is in $L^2 (\R^3)$ by \eqref{34}. For $N=2$ the same calculation but with $k_3=0$ produces 
\bea
v'' - \ri |k| u'_1 &=& -\omega^2\perm v -   \sign(k_2)r_2  \in  L^2 (\R^2).
\eea
Moreover, by \eqref{5}, \eqref{3} and using $r_1'= - \ri (k_2 r_2 + k_3 r_3)$ for $N=3$,
\ben\label{39}
(\perm  u_1)' + \ri |k| \perm v &=& \left[ \frac{\perm }{|k|^2 - \omega^2\perm } (-\ri  |k| v' + r_1 ) \right]' + \ri |k| \perm v \nonumber\\
&=& \ri |k| \left[ - \left( \frac{\perm }{|k|^2 - \omega^2\perm } v' \right)' + \perm v \right] +  \frac{|k|^2 \perm '}{(|k|^2 - \omega^2\perm )^2} r_1 + \frac{\perm }{|k|^2-\omega^2\perm } r_1'  \nonumber\\
&=& \ri |k|   \left[ \frac{\ri |k| \perm '}{(|k|^2 - \omega^2\perm )^2} r_1 + \frac{\perm }{|k| (|k|^2 - \omega^2\perm )} (k_2 r_2 + k_3 r_3) \right]    \nonumber\\
&& +  \frac{|k|^2 \perm '}{(|k|^2 - \omega^2\perm )^2} r_1 -  \frac{\ri\perm }{|k|^2 - \omega^2\perm } (k_2 r_2 + k_3 r_3) \nonumber\\
&=& 0 \quad\rm{ ~ on  ~} \R_{\pm} .
\een
For $N=2$ the same calculation applies if we set $k_3=0$.

Moreover, $u_1 \in L^2 (\R^N)$ by \eqref{37} and thus $u_1(\cdot,k) \in L^2 (\R_{\pm})$ and $\perm  u_1(\cdot,k) \in L^2 (\R_{\pm})$ for almost all $k$. Therefore, since \eqref{39} gives $(\perm  u_1(\cdot,k))' \in L^2 (\R_{\pm})$, and $\llbracket\perm  u_1 \rrbracket = 0$ by \eqref{22}, we obtain $\perm  u_1(\cdot,k) \in H^1 (\R)$.\\

Furthermore, $(|k|^2 - \omega^2\perm ) u_1 = - \ri |k| v' + r_1$ by \eqref{5}, implying
\bea
v' - \ri  |k| u_1 = \frac{1}{\ri |k|} (\omega^2\perm  u_1 + r_1) \in H^1 (\R),
\eea
and, since $\llbracket\perm  u_1 \rrbracket = \llbracket r_1 \rrbracket = 0$, we have
$
\llbracket v' - \ri  |k| u_1 \rrbracket = 0.
$
Finally, $\llbracket v\rrbracket = 0$ by \eqref{19}, which completes the proof of all properties concerning $u_1$ and $v$ in $\mathcal{\widetilde{D}}_\omega$.\\

To prove also the properties concerning $w$ in $\mathcal{\widetilde{D}}_\omega$, let $\widetilde{s}$ denote the right-hand side of \eqref{4}. With $\{w^{(1)}_+, w^{(2)}_+\}$ and $\{w^{(1)}_-, w^{(2)}_-\}$ denoting the fundamental systems of problem \eqref{4} given by assumption iii), we proceed as before for $v$, and obtain \eqref{14} and \eqref{15}, now with $w_{\pm}^{\rm{(part)}}$ instead of $v_{\pm}^{\rm{(part)}}$ and with all expressions on the right-hand side correspondingly replaced, i.e., $v_\pm^{(1,2)}, s, \tau_\pm$ replaced by $w_\pm^{(1,2)}, \widetilde{s}, \widetilde{\tau}_\pm$ respectively. We also obtain \eqref{16} for $\widetilde{s}$, but without the denominator $1 +|k|^2$.\\

Hence, the boundedness of $\alpha_4^\pm$ in assumption iv) implies \eqref{17} for $(1+|k|) w_{\pm}^{\rm{(part)}}$, and we obtain the general solution in $L^2(\R_{\pm})$ of problem \eqref{4} by
\be\label{40}
\begin{array}{ll} 
	w_{+} (x) = \widetilde{A}_+ w^{(2)}_+ (x) + w_+^{\rm{(part)}} (x) &  (x \in \R_+), \\
	w_{-} (x) = \widetilde{A}_- w^{(1)}_- (x) + w_-^{\rm{(part)}} (x) &  (x \in \R_-), \end{array}
\ee
with ($k$-dependent) constants $\widetilde{A}_+,  \widetilde{A}_- \in \C$. The condition $\llbracket w\rrbracket = 0$ in $\widetilde{\mathcal{D}}_\omega$ gives  equation \eqref{19}, and $w'_{\pm}$ is given by \eqref{20} (both with the corresponding replacements).\\

The condition $\llbracket w'\rrbracket = 0$ in $\widetilde{\mathcal{D}}_\omega$ therefore requires
\bea
\widetilde{A}_+ w^{(2)'}_+ (0) - \widetilde{A}_- w^{(1)'}_- (0)  = \frac{w_+^{(1)'} (0)}{\widetilde{\tau}_+}  \int\limits_0^{\infty}   w^{(2)}_+ (t) \widetilde{s} (t) dt -   \frac{w^{(2)'}_- (0)}{\widetilde{\tau}_-}  \int\limits_{-\infty}^0   w^{(1)}_- (t) \widetilde{s} (t) dt.
\eea
Together with \eqref{19} (with the corresponding replacements), we obtain a $2\times 2$ system for $(\widetilde{A}_+,  \widetilde{A}_-)^\top$ with determinant $\widetilde{d} = - w^{(2)}_+ (0)  w^{(1)'}_- (0) +  w^{(1)}_- (0)  w^{(2)'}_+ (0) $ which is non-zero by assumption iii). Thus, $\widetilde{A}_+$ and $\widetilde{A}_-$ are uniquely calculated as
$$
\begin{aligned}
	\widetilde{A}_+  &= \frac{1}{\widetilde{d}}  \left[ \frac{w^{(1)}_- (0)  w_+^{(1)'} (0)  -  w^{(1)'}_- (0) w^{(1)}_+ (0)}{\widetilde{\tau}_+} \int\limits_0^{\infty}   w^{(2)}_+ (t) \widetilde{s} (t) dt -   \int\limits_{-\infty}^0   w^{(1)}_- (t) \widetilde{s} (t) dt \right] ,\\
	\widetilde{A}_-  &= \frac{1}{\widetilde{d}}  \left[ - \int\limits_0^{\infty}   w^{(2)}_+ (t) \widetilde{s} (t) dt + \frac{w^{(2)}_- (0)  w^{(2)'}_+  (0)  -  w^{(2)'}_- (0) w^{(2)}_+  (0)}{\widetilde{\tau}_-}   \int\limits_{-\infty}^0   w^{(1)}_- (t) \widetilde{s} (t) dt \right] .
\end{aligned}
$$
Using analogues of \eqref{27} and \eqref{28} (again, the denominator $1+|k|^2$ is not present now), we estimate
\be\label{41}
|\widetilde{A}_+| \le \frac{c}{|\widetilde{d}|}   \left[  \frac{C_{w,1,-}  C'_{w,1,+} + C'_{w,1,-}  C_{w,1,+}}{|\widetilde{\tau}_+|}  \cdot \frac{C_{w,2,+}}{\sqrt{\widetilde{ \nu}_+}} +  \frac{C_{w,1,-}}{\sqrt{\widetilde{ \nu}_-}}  \right]  \|r(\cdot,k)\|_{L^2(\R)^3} ,
\ee
\be\label{42}
|\widetilde{A}_-| \le \frac{c}{|\widetilde{d}|}   \left[  \frac{C_{w,2,+}}{\sqrt{\widetilde{ \nu}_+}}  + \frac{  C_{w,2,-} C'_{w,2,+} + C'_{w,2,-}   C_{w,2,+}}{|\widetilde{\tau}_-|}   \frac{C_{w,1,-}}{\sqrt{\widetilde{ \nu}_-}}  \right]  \|r(\cdot,k)\|_{L^2(\R)^3}
\ee
with some $k$-independent constant $c$. Consequently, as $\| w^{(2)}_+ \|_{L^2 (\R_+)} \le \frac{C_{w,2,+}}{\sqrt{2 \widetilde{ \nu}_+}}$ and $\| w^{(1)}_- \|_{L^2 (\R_-)} \le \frac{C_{w,1,-}}{\sqrt{2 \widetilde{ \nu}_-}}$, we get
\bea
(1+|k|) \| \widetilde{A}_+(k) w^{(2)}_+ \|_{L^2 (\R_+)}  \le \frac{c~C_{w,2,+} (1+|k|)}{|\widetilde{d}| \sqrt{\widetilde{ \nu}_+} }   \left[  \frac{C_{w,1,-}  C'_{w,1,+} + C'_{w,1,-}  C_{w,1,+}}{|\widetilde{\tau}_+| \sqrt{\widetilde{ \nu}_+} }   C_{w,2,+} +  \frac{C_{w,1,-}}{\sqrt{\widetilde{ \nu}_-}}  \right]  \|r(\cdot,k)\|_{L^2(\R)^3}
\eea
and
\bea
(1+|k|) \| \widetilde{A}_-(k) w^{(1)}_- \|_{L^2 (\R_-)}  \le \frac{c~C_{w,1,-} (1+|k|)}{|\widetilde{d}| \sqrt{\widetilde{ \nu}_-} }   \left[  \frac{C_{w,2,+}}{\sqrt{\widetilde{ \nu}_+}} +    \frac{C_{w,2,-}   C'_{w,2,+} + C'_{w,2,-}  C_{w,2,+}}{|\widetilde{\tau}_-| \sqrt{\widetilde{ \nu}_-}}  C_{w,1,-}    \right]  \|r(\cdot,k)\|_{L^2(\R)^3} .
\eea
Since, by assumption iv), the two factors $c \alpha_5^\pm$ on the right-hand sides are bounded independently of $k$, and since \eqref{17} holds for $(1+|k|) w_{\pm}^{\rm{(part)}}$, we obtain from \eqref{40} that
\be\label{43}
( 1+|k|) w  \in L^2 (\R^N), \| (1+|k|) w \|_{L^2(\R^N)}    \le  c   \|r\|_{L^2(\R^N)^3} ,
\ee
which shows that both $w$ and $|k| w$ are in $L^2 (\R^N)$. The next task is to show that $w'  \in L^2 (\R^N)$. Indeed, using \eqref{20} (with the corresponding replacements, see above) and assumption ii) we find
\bea
|w'_+ (x)| \le | \widetilde{A}_+ | C'_{w,2,+} e^{-\widetilde{\nu}_+ x}  + \frac{C'_{w,2,+} C_{w,1,+}}{|\widetilde{\tau}_+| }  e^{-\widetilde{\nu}_+ x}  \int\limits_0^x    e^{\widetilde{\nu}_+ t}  | \widetilde{s} (t)| dt  
+    \frac{C'_{w,1,+} C_{w,2,+}}{|\widetilde{\tau}_+| }  e^{\widetilde{\nu}_+ x}  \int\limits_x^{\infty}    e^{-\widetilde{\nu}_+ t}  | \widetilde{s} (t)| dt \ (x>0),
\eea
whence Lemma 4.7 in \cite{BDPW24} and \eqref{41} give
\bea
\|w'_+ \|_{L^2 (\R_+)} &\le& | \widetilde{A}_+ |   \frac{C'_{w,2,+}}{\sqrt{2 \widetilde{\nu}_+}}  + \frac{C'_{w,2,+} C_{w,1,+}  + C'_{w,1,+} C_{w,2,+}}{|\widetilde{\tau}_+| \widetilde{\nu}_+ } \| \widetilde{s} \|_{L^2 (\R_+)}   \\
&\le& \left[ \frac{c~C'_{w,2,+}}{|\widetilde{d}| \sqrt{\widetilde{\nu}_+}}   \left\{ \frac{C_{w,1,-} C'_{w,1,+} +  C'_{w,1,-}  C_{w,1,+}}{|\widetilde{\tau}_+| \sqrt{\widetilde{\nu}_+}} C_{w,2,+} +   \frac{C_{w,1,-}}{\sqrt{\widetilde{\nu}_-}} \right\}    \right.\\
&\quad & \left.+ c \frac{C'_{w,2,+}   C_{w,1,+} + C'_{w,1,+}  C_{w,2,+}}{|\widetilde{\tau}_+| \widetilde{ \nu}_+} \right]  \|r(\cdot,k)\|_{L^2(\R_+)^3} .
\eea
Since the factor $c\alpha_6^+$ on the right-hand side is bounded independently of $k$ by assumption iv), we conclude
\be\label{44}
w'_+  \in L^2 (\R^N_+), ~ \|  w'_+ \|_{L^2(\R_+^N)}    \le  c   \|r\|_{L^2(\R_+^N)^3} .
\ee
Analogously,
\bea
|w'_- (x)| \le | \widetilde{A}_- |C'_{w,1,-} e^{\widetilde{\nu}_- x}  + \frac{C'_{w,2,-} C_{w,1,-}}{|\widetilde{\tau}_-| }  e^{-\widetilde{\nu}_- x}  \int\limits_{-\infty}^x    e^{\widetilde{\nu}_- t}  | \widetilde{s} (t)| dt
+    \frac{C'_{w,1,-} C_{w,2,-}}{|\widetilde{\tau}_-| }  e^{\widetilde{\nu}_- x}  \int\limits_x^0    e^{-\widetilde{\nu}_- t}  | \widetilde{s} (t)| dt  ,
\eea
\bea
\|w'_- \|_{L^2 (\R_-)} &\le& | \widetilde{A}_- |  \frac{C'_{w,1,-}}{\sqrt{2 \widetilde{\nu}_-}}  + \frac{C'_{w,2,-} C_{w,1,-}  + C'_{w,1,-} C_{w,2,-}}{|\widetilde{\tau}_-| \widetilde{\nu}_- } \| \widetilde{s} \|_{L^2 (\R_-)}   \\
&\le& \left[ \frac{c~C'_{w,1,-}}{|\widetilde{d}| \sqrt{\widetilde{\nu}_-}}   \left\{ \frac{C_{w,2,+}}{ \sqrt{\widetilde{\nu}_+}} +  \frac{C_{w,2,-}   C'_{w,2,+} + C'_{w,2,-}  C_{w,2,+}}{|\widetilde{\tau}_-| \sqrt{\widetilde{ \nu}_-}} C_{w,1,-}       \right\}    \right.\\
&\quad & \left.+ c \frac{C'_{w,2,-}   C_{w,1,-} + C'_{w,1,-}  C_{w,2,-}}{|\widetilde{\tau}_-| \widetilde{ \nu}_-} \right]  \|r(\cdot,k)\|_{L^2(\R_-)^3} .
\eea
Again, the factor $c\alpha_6^-$ on the right-hand side is bounded independently of $k$ by iv), and hence
\bea
w'_-  \in L^2 (\R^N_-), ~  \|  w'_- \|_{L^2(\R_-^N)}    \le  c    \|r\|_{L^2(\R_-^N)^3} ,
\eea
which together with \eqref{44} implies
\be\label{45}
w'  \in L^2 (\R^N), ~  \|  w' \|_{L^2(\R^N)}    \le  c  \|r\|_{L^2(\R^N)^3} .
\ee
Finally, by \eqref{4},
\bea
w'' - |k|^2 w = \begin{cases}
- \omega^2\perm w - \sign(k_2)r_2 \in L^2 (\R^2) ~ &{\rm if } ~ N=2,\\
	- \omega^2\perm w - \frac{1}{|k|} (k_2r_2 + k_3 r_3) \in L^2 (\R^3) ~ &{\rm if } ~ N=3,
\end{cases}
\eea
which completes the proof of $(u_1, v, w)  \in \widetilde{\mathcal{D}}_\omega$, and \eqref{34}, \eqref{37}, \eqref{43} imply
\bea
\left\| (u_1,v,w) \right\|_{L^2 (\R^N)^3}   \le  c   \|r\|_{L^2(\R^N)^3} ,
\eea
and hence $\omega \in \rho (\mathcal{L})$.
\epf

\subsection{Media Homogeneous in $\R^N_\pm$}\label{S:resolv-hom}

While in \cite{BDPW24} we considered the one- and two-dimensional cases with homogeneous media on either side of the interface, in this section we analyze the 3D case in the same situation $\perm_\pm(\cdot,\omega)=\perm_\pm(\omega)$ for those $\omega \in \C$ for which
\beq\label{E:omperm-nz}
\omega \notin \Omega_0=\{\omega \in D(\perm): \omega^2\perm_+(\omega)=0 \quad \text{or} \quad \omega^2\perm_-(\omega)= 0\}.
\eeq

Of course, the case of $\perm_\pm$ independent of $x_1$ is a special case of periodic $\perm_\pm$ studied in Sec. \ref{S:resolv-per}. However, as the calculations and results for the homogeneous case are very explicit, we present them here independently.

We recall the definition of the sets
\ben\label{E:Nred}
\cN^\mathrm{red}&=&\left\{\omega\in D(\perm)\setminus \Omega_0: \perm_+(\omega)+\perm_-(\omega)\neq 0, \frac{\omega^2\perm_+^2(\omega)}{\perm_+(\omega)+\perm_-(\omega)}, \frac{\omega^2\perm_-^2(\omega)}{\perm_+(\omega)+\perm_-(\omega)}\notin [0,\infty),\right. \nonumber \\
&&\left. \hspace{80pt}\frac{\omega^2\perm_+(\omega)\perm_-(\omega)}{\perm_+(\omega)+\perm_-(\omega)}\in (0,\infty)\right\}
\een
and
\beq\label{E:Mpm-red}
\cM_\pm^\mathrm{red}= \{\omega \in D(\perm)\setminus \Omega_0: \omega^2\perm_\pm(\omega)\in (0,\infty)\}
\eeq
from Theorem \ref{T:main-homog}. We show that the complement of the union of theses sets and of $\Omega_0$ lies in $\mathcal{S}$ and therefore in the resolvent set.

	\bprop\label{P:resolv-hom}
Assume $\perm_\pm(\cdot,\omega) =\perm_\pm(\omega)$. Then
$$D(\perm)\setminus (\cN^\mathrm{red} \cup \cM_+^\mathrm{red}\cup \cM_-^\mathrm{red}\cup \Omega_0)\subset \mathcal{S}.$$
\eprop

\bpf
We first define for each $k \in \R^{N-1}$ the set
\beq\label{E:Nkred}
\begin{aligned}
N_k^\mathrm{red}:= &\left\{\omega \in D(\perm)\setminus \Omega_0, |k|^2 -\omega^2\perm_+(\omega), \ |k|^2 -\omega^2\perm_-(\omega)\notin (-\infty,0], \right.\\
&\left. \ \text{and} \ \omega^2\perm_+(\omega)\perm_-(\omega)=|k|^2(\perm_+(\omega)+\perm_-(\omega))\right\}
\end{aligned}
\eeq
and claim that
\beq\label{E:Nred-eq}
\bigcup_{k\in \R^{N-1}}N_k^\mathrm{red} = \cN^\mathrm{red}.
\eeq
Note that these sets  will be also used in Section \ref{S:Weyl},  where for $\omega\in N_k^\mathrm{red}$ we will
generate Weyl sequences localized at the interface, while for $\omega\in \cM_\pm^\mathrm{red}$ Weyl sequences with their support moving to infinity in the $x_1$-direction will be found.

To show \eqref{E:Nred-eq}, first note that for any fixed $k \in \R^{N-1}$ and any $\omega \in N_k^\mathrm{red}$ we have $\perm_+(\omega)+\perm_-(\omega)\neq 0$ because otherwise the last equation in \eqref{E:Nkred} implies $\omega^2\perm_+(\omega)\perm_-(\omega)=0$, i.e., $\omega \in \Omega_0$. Solving this equation for $|k|^2$, we get $|k|^2 = \frac{\omega^2\perm_+(\omega)\perm_-(\omega)}{\perm_+(\omega)+\perm_-(\omega)}$. Hence, if $\omega \in N_k^\mathrm{red}$ for some $k \in\R^{N-1}$, then
\beq\label{E:frac-perm}
\frac{\omega^2\perm_+(\omega)\perm_-(\omega)}{\perm_+(\omega)+\perm_-(\omega)} \in \R_+ ,
\eeq
and
$$ \frac{\omega^2\perm_+(\omega)\perm_-(\omega)}{\perm_+(\omega)+\perm_-(\omega)} - \omega^2 \perm_\pm(\omega) \notin (-\infty,0]$$
for both cases $+$ and $-$. The value $0$ is excluded in \eqref{E:frac-perm} due to $\omega \notin \Omega_0$. After simplification, we get the conditions in the definition of $\cN^\mathrm{red}$. This shows $\bigcup_{k\in \R^{N-1}}N_k^\mathrm{red} \subset \cN^\mathrm{red}$.

For the opposite inclusion, we take $\omega \in \cN^\mathrm{red}$ and choose some $k\in \R^{N-1}$ with $|k|^2 = \frac{\omega^2\perm_+(\omega)\perm_-(\omega)}{\perm_+(\omega)+\perm_-(\omega)}$. Then, clearly, $\omega \in N_k^\mathrm{red}$ and \eqref{E:Nred-eq} follows.

Next, we proceed to show the inclusion in the statement of the proposition. We show that i)-iv) in Definition \ref{D:S} are satisfied by $\omega \in D(\perm)\setminus (\cN^\mathrm{red} \cup \cM_+^\mathrm{red}\cup \cM_-^\mathrm{red}\cup \Omega_0)$.

Condition i) is satisfied as $\omega\notin \cM_+^\mathrm{red}\cup \cM_-^\mathrm{red}\cup \Omega_0$ and $\perm_\pm$ is independent of $x$. For ii) and iii) first note that equations \eqref{3} and \eqref{4} are identical if $r=0$ and $\perm_\pm$ are independent of $x$. For a given $k \in \R^{N-1}$ a fundamental system suitable for ii) is
$$v^{(1)}_+(x)=e^{\mu_+x}, v^{(2)}_+(x)=e^{-\mu_+x} \quad \text{and} \quad v^{(1)}_-(x)=e^{\mu_-x}, v^{(2)}_-(x)=e^{-\mu_-x},$$
where
$$\mu_\pm :=\sqrt{|k|^2-\omega^2\perm_\pm(\omega)}.$$
Because $\omega \notin \cM_\pm^\mathrm{red}$, we have $\Real(\mu_\pm)>0$. Recall the definition of the complex square root from Sec. \ref{S:math-setting}. As a result, $v^{(1)}_+\notin L^2(\R_+)$ and $v^{(2)}_-\notin L^2(\R_-)$.

The estimates in ii) hold with
$$\nu_\pm :=\Real(\mu_\pm), \ C_{v,1,\pm}=C_{v,2,\pm}=1 \  \text{and} \  C_{v,1,\pm}'=C_{v,2,\pm}'=|\mu_\pm|.$$
For $d$ we get
$$d=-\omega^2\left(\frac{\perm_-(\omega)}{\mu_-}+\frac{\perm_+(\omega)}{\mu_+}\right)\neq 0$$
because $\omega \notin \Omega_0$ and because the expression in the parentheses vanishes if and only if $\omega^2\perm_+(\omega)\perm_-(\omega)=|k|^2(\perm_+(\omega)+\perm_-(\omega))$, i.e. only if $\omega \in N_k^\mathrm{red}$, see Remark 4.6 in \cite{BDPW24}.

As explained above, for iii) we can choose
$$w^{(1)}_\pm= v^{(1)}_\pm,  \ w^{(2)}_\pm=v^{(2)}_\pm, \ C_{w,1,\pm}=C_{v,1,\pm}, \ C_{w,2,\pm}=C_{v,2,\pm}, \ C'_{w,1,\pm}=C'_{v,1,\pm}, \ C'_{w,2,\pm}=C'_{v,2,\pm}, \text{ and } \widetilde{ \nu}_\pm=\nu_\pm.$$
For $\widetilde{d}$ we obtain
$$\widetilde{d}=-(\mu_++\mu_-)\neq 0$$
because $\Real(\mu_\pm)>0.$

It remains to show that the constants $\alpha_j^\pm, j=1,\dots, 6$, are bounded independently of $k$. First note that
$$\tau_\pm=-2\omega^2\frac{\perm_\pm}{\mu_\pm} \quad \text{and} \quad \widetilde{\tau}_\pm = -2\mu_\pm.$$
The following straightforward estimates are used below
$$
\begin{aligned}
	&\nu_\pm \gtrsim 1+|k|, \ 1+|k|\lesssim |\mu_\pm|\lesssim 1+|k|, \ |\perm_-\mu_++\perm_+\mu_-|\gtrsim  1+|k|, \\
	&|d|, |\tau_\pm|\gtrsim  (1+|k|)^{-1}, \ \text{and} \ |\widetilde{d}|, |\widetilde{\tau}_\pm|\gtrsim  1+|k|,
\end{aligned}
$$
where $\lesssim$ and $\gtrsim$ denote inequalities up to $k$-independent multiplicative constants. Their straightforward application produces
$$
\begin{aligned}
	\alpha_1^\pm &= \frac{1}{2|\omega|^2|\perm_\pm|}\frac{|\mu_\pm|}{\nu_\pm(1+|k|^2)}\lesssim \frac{1}{\nu_\pm (1+|k|)}\lesssim \frac{1}{1+|k|^2},\\
	\alpha_2^+ &=\frac{1}{|d|(1+|k|^2) \sqrt{\nu_+}}
	\left(1+\frac{1}{\sqrt{\nu_-}} + \frac{|\mu_+|+|\mu_-|}{|\tau_+|(1+|k|^2)\sqrt{\nu_+}}\right)\lesssim \frac{1}{1+|k|^{3/2}},\\
	\alpha_3^+ & = \frac{1}{(1+|k|^3)\sqrt{\nu_+}}\left[\frac{|\mu_+|}{|d|} \left(  1+\frac{1}{\sqrt{\nu_-}}  + \frac{|\mu_+|+|\mu_-|}{ |\tau_+| (1+|k|^2)\sqrt{\nu_+}}  \right)
	+  \frac{|\mu_+|+|\mu_-|}{|\tau_+| \sqrt{\nu_+}}
	\right]\lesssim \frac{1}{1+|k|^{3/2}},\\
	\alpha_4^\pm & = \frac{1+|k|}{|\widetilde{\tau}_\pm|\widetilde{ \nu}_\pm} \lesssim \frac{1}{1+|k|},\\
	\alpha_5^+ &=\frac{1+|k|}{|\widetilde{d}|\sqrt{\widetilde{\nu}_+}}\left(\frac{|\mu_+|+|\mu_-|}{|\widetilde{\tau}_+|\sqrt{\widetilde{\nu}_+}} + \frac{1}{\sqrt{\widetilde{\nu}_-}}\right) \lesssim \frac{1}{1+|k|},\\
	\alpha_6^+&=\frac{|\mu_+|}{|\widetilde{d}| \sqrt{\widetilde{\nu}_{+} }}   \left(   \frac{ |\mu_+|+|\mu_-| }{|\widetilde{\tau}_+|\sqrt{\widetilde{\nu}_-}}   +  \frac{1}{\sqrt{\widetilde{\nu}_{-} }}   \right) +  \frac{ |\mu_+|+|\mu_-|}{|\widetilde{\tau}_+| \widetilde{\nu}_+}  \lesssim \frac{1}{1+|k|},
\end{aligned}
$$
and analogously
$$
\alpha_2^- \lesssim  \frac{1}{1+|k|^{3/2}}, \quad \alpha_3^- \lesssim \frac{1}{1+|k|^{3/2}}, \quad \alpha_5^-, \alpha_6^- \lesssim \frac{1}{1+|k|}.
$$
\epf

\subsection{Media Periodic in $\R^N_\pm$}\label{S:resolv-per}

Here we consider the case of $\perm_\pm(\cdot,\omega)$ being $a_\pm$-periodic with $a_\pm>0$. Clearly, equation \eqref{4} for the component $w$ has the form of the Schr\"odinger equation \eqref{E:lV} if we set  $V=\omega^2\perm_\pm$. But also equation \eqref{3} for the component $v$ transforms to \eqref{E:lV} if we set  $V=\omega^2\perm_\pm -\frac{3}{4}\left(\frac{\perm_\pm'}{\perm_\pm}\right)^2 + \frac{1}{2}\frac{\perm_\pm''}{\perm_\pm}$, as we showed in Lemma \ref{lem:v-eqn}.  Therefore, results of Sec. \ref{S:per-estimates} for the periodic Schr\"odinger  equation play a central role here. 

In the case when the discriminant of the differential equation \eqref{3} on $\R_+$ does not lie in $[-2,2]$, the fundamental system $\psi_1, \psi_2$ from \eqref{eq:6} can be chosen as the fundamental system $v^{(1)}_+$ and $v^{(2)}_+$ in Def.~\ref{D:S}. This is because in this case $\Re(\kappa)>0$ in \eqref{eq:6}.

\begin{definition}\label{D:Sp}
	For $\perm_\pm(\cdot,\omega)$ being $a_\pm$-periodic with $a_\pm>0$ we define the set
	$$
	\begin{aligned}
		\mathcal{S}_p:=&\left\{\omega\in D(\perm)\setminus \Omega: D_+^{(v)}(k), D_-^{(v)}(k), D_+^{(w)}(k), D_-^{(w)}(k)\notin [-2,2], \right. \\
		& \left.\quad d(k)\neq0 \quad \hbox{ and } \quad \widetilde{d}(k)\neq 0 \quad \hbox{ for all } k\in \R^{N-1},\right. \\
		& \left.\quad \hbox{ and} \quad \perm_+(0,\omega)+\perm_-(0,\omega)\neq 0 \quad \text{or} \quad \perm_+'(0,\omega)-\perm_-'(0,\omega)\neq 0 \right\},
	\end{aligned}$$
	where $D_\pm^{(v)}(k)$ and $D_\pm^{(w)}(k)$ denote the discriminants of the homogeneous differential equations \eqref{3} and \eqref{4}, respectively, on the intervals $[0,a_+]$ and $[-a_-,0]$ according to the index $\pm$,  respectively.  
	The quantities $d$ and $\widetilde{d}$ were introduced in Definition \ref{D:S}.
\end{definition}

In view of Lemma \ref{lem:v-eqn} one may wonder whether it makes a difference if, in Definition \ref{D:Sp}, we consider the discriminants $D_\pm^{(v)}(k)$  based on the homogeneous Sturm-Liouville equation \eqref{3} (i.e.~\eqref{E:v}) for $v$ or the equivalent homogeneous Schr\"{o}dinger equation  \eqref{E:z} for 
\be\label{E:zvp}
z=\frac{\sqrt{\omega^2\perm}}{|k|^2-\omega^2\perm } v';
\ee
see Lemma \ref{lem:v-eqn}. The next lemma shows that this is not the case.

\begin{lem}
The monodromy matrices $\Phi_\pm(a_\pm)$ and $\widehat{\Phi}_\pm(a_\pm)$  of the differential equations \eqref{E:v} and \eqref{E:z} on $\R_\pm$, respectively, are related via $\widehat{\Phi}_\pm(a_\pm)=T_\pm\Phi_\pm(a_\pm)T_\pm^{-1}$, where 
$$T_\pm:=\left(\begin{array}{cc} 0 & \frac{\omega^2}{\sqrt{\omega^2\perm_\pm(0)}} \\ \sqrt{\omega^2\perm_\pm(0)} & -\frac{\omega^2\perm_\pm'(0)}{2\perm_\pm(0)\sqrt{\omega^2\perm_\pm(0)}} \end{array}  \right).$$
As a consequence, the eigenvalues (and thus the discriminants) of $\Phi_\pm(a_\pm)$ and $\widehat{\Phi}_\pm(a_\pm)$ coincide.
\end{lem}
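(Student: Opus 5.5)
We would prove the lemma by expressing the change of variables from Lemma \ref{lem:v-eqn} as a linear map on first-order state vectors, and showing that this map carries fundamental matrices of \eqref{E:v} to fundamental matrices of \eqref{E:z}. For \eqref{E:v} the state vector is $Y=(v, P_v v')^\top$ with $P_v=\frac{\perm}{|k|^2-\omega^2\perm}$. For \eqref{E:z} it is $Z=(z,z')^\top$. Write $W=\omega^2\perm$. By definition, $z=\frac{\sqrt{W}}{|k|^2-W}v'=\frac{\omega^2}{\sqrt W}P_v v'$, using $W/\sqrt W=\sqrt W$. From \eqref{E:vz}, $z'=\sqrt W\, v-\frac{\perm'}{2\perm}z$, which gives
\begin{equation*}
Z(x)=\begin{pmatrix} 0 & \frac{\omega^2}{\sqrt{W(x)}} \\ \sqrt{W(x)} & -\frac{\omega^2\perm'(x)}{2\perm(x)\sqrt{W(x)}}\end{pmatrix} Y(x)=:T(x)Y(x).
\end{equation*}
This holds for every solution, by the two directions of Lemma \ref{lem:v-eqn}. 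Its determinant is $-\omega^2\neq0$, so $T(x)$ is invertible; note $\omega\neq 0$ since $\omega\notin\Omega$.

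The key steps are as follows. (1) Check that $T(x)$ is $a_\pm$-periodic, with $T(0)=T_\pm$. This is immediate from periodicity of $\perm_\pm$ and its derivative. The square root $\sqrt{W(x)}$ could in principle change branch along $x$. However, $\omega\notin\Omega$ keeps $W(x)$ away from $[0,\infty)$, so $W$ stays away from the branch cut $(-\infty,0]$ of the principal root. Hence $\sqrt{W}$ is continuous along $x$ and equals the composition of the continuous periodic $W$ with a fixed branch, so it is periodic.

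(2) Let $\Phi$ be the canonical fundamental matrix of \eqref{E:v}. Then every column of $T\Phi$ is the state vector of a solution of \eqref{E:z}, so $T(x)\Phi(x)=\widehat\Phi(x)C$ for a constant invertible $C$. Evaluating at $0$ gives $C=T(0)=T_\pm$, hence $\widehat\Phi(x)=T(x)\Phi(x)T_\pm^{-1}$.

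(3) At $x=a_\pm$, using $T(a_\pm)=T(0)=T_\pm$, we obtain $\widehat\Phi(a_\pm)=T_\pm\Phi(a_\pm)T_\pm^{-1}$. Similar matrices have equal eigenvalues and traces, so the discriminants coincide. On $\R_-$ one argues identically on $[-a_-,0]$, with the canonical system normalized at $0$.

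The main point to handle carefully is the regularity needed to make the state-vector identity rigorous. The coefficient $\frac{\perm''}{2\perm}$ in \eqref{E:z} requires $\perm\in W^{2,\infty}$, which holds by \eqref{E:ass-perm}. One also needs $W\neq0$ and $|k|^2-W\neq 0$ uniformly, which is exactly what $\omega\notin\Omega$ provides. We will use Remark \ref{rem:root} for differentiating the roots and for the branch-continuity argument.
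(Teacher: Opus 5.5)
Your route is the paper's route. The paper also writes a fundamental matrix of \eqref{E:z} as $\cT_\pm\Phi_\pm$ with exactly your matrix $\cT_\pm(x)$, normalizes to $\widehat\Phi_\pm=\cT_\pm\Phi_\pm T_\pm^{-1}$, and concludes from $\cT_\pm(a_\pm)=\cT_\pm(0)=T_\pm$. Your algebra in steps (1)--(3) is fine. The branch argument in step (1), however, is wrong as stated.

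\textbf{The branch step.} The condition $\omega\notin\Omega$ keeps $W=\omega^2\perm$ away from $[0,\infty)$. It does not keep $W$ away from the principal cut $(-\infty,0]$; these are opposite rays. For example, $W(x)=-1+\tfrac{\ri}{2}\sin(2\pi x/a)$ satisfies $\dist(W(\R),[0,\infty))\ge 1$, yet it crosses the negative real axis once per half-period. At each crossing the principal $\sqrt{W(x)}$ jumps between values near $\ri$ and $-\ri$.

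Periodicity of the principal $\sqrt{W(x)}$ is actually trivial, since it is a fixed function of a periodic function. What fails is differentiability. At a jump, $z=\frac{\sqrt W}{|k|^2-W}v'$ does not solve \eqref{E:z} across that point, Remark~\ref{rem:root} does not apply, and $\cT\Phi$ is not a fundamental matrix.

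\textbf{The repair.} It uses $\omega\notin\Omega$ in the right way. Since $\overline{W(\R_\pm)}\subset\C\setminus[0,\infty)$, take the square root that is holomorphic on $\C\setminus[0,\infty)$, i.e.\ with argument in $(0,2\pi)$. Then:
\begin{itemize}
\item $x\mapsto\sqrt{W(x)}$ is $W^{3,\infty}$ and $a_\pm$-periodic, so Lemma~\ref{lem:v-eqn} and your steps (2)--(3) go through verbatim.
\item The resulting $\cT(0)$ equals $\pm T_\pm$, where $T_\pm$ uses the principal value. The sign is harmless, because conjugation by $-T_\pm$ equals conjugation by $T_\pm$.
\item $\widehat\Phi_\pm$ does not depend on the branch, since \eqref{E:z} contains only $W,W',W''$.
\end{itemize}

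Periodicity of the chosen continuous branch genuinely matters. A continuous root $s$ with $s(a_\pm)=-s(0)$ would give $\cT(a_\pm)=-T_\pm$ and flip the sign of the discriminant. This cannot happen here because $W$ stays in the simply connected slit plane $\C\setminus[0,\infty)$, which does not contain $0$.

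The paper's own proof passes over the branch question silently. With this fix, your version is complete.
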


\bpf
Using the notation $W_\pm=\omega^2\perm_\pm(\cdot,\omega)$, the fundamental matrix $\Phi_\pm(x)$ of \eqref{E:v} on $\R_\pm$ satisfying $\Phi_\pm(0)=I$ is of the form 
$$\Phi_\pm=\left(\begin{array}{cc} v^{(1)}_\pm & v^{(2)}_\pm  \\ \frac{W_\pm}{\omega^2\left(|k|^2-W_\pm\right)}v^{(1)'}_\pm & \frac{W_\pm}{\omega^2\left(|k|^2-W_\pm\right)}v^{(2)'}_\pm \end{array}  \right)$$
with suitably normalised fundamental solutions $v^{(1)}_\pm$, $v^{(2)}_\pm$. Noting Remark \ref{rem:root}, by \eqref{E:zvp},  a fundamental matrix  of \eqref{E:z} on $\R_\pm$ is given by
\bea
\widetilde{\Phi}_\pm&=& \left(\begin{array}{cc} z^{(1)}_\pm & z^{(2)}_\pm  \\ z^{(1)'}_\pm & z^{(2)'}_\pm  \end{array}  \right)\\
&=&\left(\begin{array}{cc} \frac{\sqrt{W_\pm}}{|k|^2-W_\pm } v^{(1)'}_\pm  & \frac{\sqrt{W_\pm}}{|k|^2-W_\pm } v^{(2)'}_\pm \\
 \left(\frac{\sqrt{W_\pm}}{\perm_\pm} \frac{\perm_\pm}{|k|^2-W_\pm }v^{(1)'}_\pm\right)'  &\left(\frac{\sqrt{W_\pm}}{\perm_\pm} \frac{\perm_\pm}{|k|^2-W_\pm }v^{(2)'}_\pm\right)' \end{array}  \right)\\
&=& \left(\begin{array}{cc} \frac{\sqrt{W_\pm}}{|k|^2-W_\pm } v^{(1)'}_\pm  & \frac{\sqrt{W_\pm}}{|k|^2-W_\pm } v^{(2)'}_\pm  \\
 \frac{\sqrt{W_\pm}}{\perm_\pm} \perm_\pm v^{(1)}_\pm + \left(\frac{\omega^2}{\sqrt{W_\pm}}\right)'  \frac{\perm_\pm}{|k|^2-W_\pm } v^{(1)'}_\pm  & \frac{\sqrt{W_\pm}}{\perm_\pm} \perm_\pm v^{(2)}_\pm + \left(\frac{\omega^2}{\sqrt{W_\pm}}\right)'  \frac{\perm_\pm}{|k|^2-W_\pm } v^{(2)'}_\pm\end{array}  \right)\\
&=&\left(\begin{array}{cc} \frac{\sqrt{W_\pm}}{|k|^2-W_\pm } v^{(1)'}_\pm  & \frac{\sqrt{W_\pm}}{|k|^2-W_\pm } v^{(2)'}_\pm  \\
 \sqrt{W_\pm} v^{(1)}_\pm - \frac{W_\pm'}{2W_\pm\sqrt{W_\pm}}  \frac{W_\pm}{|k|^2-W_\pm } v^{(1)'}_\pm  &  \sqrt{W_\pm} v^{(2)}_\pm - \frac{W_\pm'}{2W_\pm\sqrt{W_\pm}}  \frac{W_\pm}{|k|^2-W_\pm } v^{(2)'}_\pm\end{array}  \right)\\
&=&\left(\begin{array}{cc} 0 & \frac{\omega^2}{\sqrt{W_\pm}} \\ \sqrt{W_\pm} & -\frac{\omega^2 W_\pm'}{2W_\pm\sqrt{W_\pm}} \end{array}  \right) 
\left(\begin{array}{cc} v^{(1)}_\pm & v^{(2)}_\pm  \\ \frac{W_\pm}{\omega^2\left(|k|^2-W_\pm\right)}v^{(1)'}_\pm & \frac{W_\pm}{\omega^2\left(|k|^2-W_\pm\right)}v^{(2)'}_\pm \end{array}  \right) \ =\ \cT_\pm \Phi_\pm,
\eea
where $\cT_\pm(x):=\left(\begin{array}{cc} 0 & \frac{\omega^2}{\sqrt{W_\pm(x)}} \\ \sqrt{W_\pm(x)} & -\frac{\omega^2 W_\pm'(x)}{2W_\pm(x)\sqrt{W_\pm(x)}} \end{array}  \right) $.
Thus the fundamental matrix $\widehat{\Phi}_\pm$ of \eqref{E:z} satisfying $\widehat{\Phi}_\pm(0)=I$ is given by 
$$\widehat{\Phi}_\pm=\widetilde{\Phi}_\pm\widetilde{\Phi}_\pm(0)^{-1}= \cT_\pm \Phi_\pm \Phi_\pm(0)^{-1}   \cT_\pm(0)^{-1} = \cT_\pm \Phi_\pm T_\pm^{-1}.$$
Since $\cT_\pm(a_\pm)=\cT_\pm(0)=T_\pm$ due to the $a_\pm$-periodicity of $\perm_\pm$, we have $\widehat{\Phi}_\pm(a_\pm)=T_\pm\Phi_\pm(a_\pm)T_\pm^{-1}$.
\epf

\begin{remark}\label{rem:Sp}
        The definition of $\Sc_p$ includes the conditions $d(k), \tilde{d}(k)\neq 0 $ for all $k$. Together
		 with the asymptotic results in Lemmas \ref{L:est-v-resolv} and \ref{L:est-w-resolv} for $|k|\to \infty$, these conditions imply that $d$ and $\widetilde{d}$ are bounded away from $0$ on the whole of $\R^{N-1}$.
\end{remark}

Recall that the set $\Sc$ is a subset of the resolvent set. Hence, the next result shows that $\Sc_p$ lies in the resolvent set.
\bprop\label{P:Sp-resolv}
We have $\mathcal{S}_p\subset \mathcal{S}$.
\eprop
\bpf
We need to check that for points $\omega \in \mathcal{S}_p$ there are fundamental systems of \eqref{3} and \eqref{4} which satisfy the properties in Definition \ref{D:S}. Note that \eqref{E:d} and \eqref{E:dtil} are satisfied by Remark \ref{rem:Sp}.
The proof is based on Lemmas \ref{L:est-v-resolv} and \ref{L:est-w-resolv} and subsequent estimates of the constants $\alpha^\pm_j, j=1, \dots, 6$.

\begin{lem}\label{L:est-v-resolv}
	Let $\omega\in D(\perm)\setminus \Omega$ and assume that $\perm_\pm(\cdot,\omega)$ are $a_\pm$-periodic with $a_\pm>0$ and satisfy 
	\beq\label{E:perm-sum}
	\perm_+(0,\omega)+\perm_-(0,\omega)\neq 0 \quad \text{or} \quad \perm_+'(0,\omega)-\perm_-'(0,\omega)\neq 0.
	\eeq
	If $D^{(v)}_+(k), D^{(v)}_-(k) \notin [-2,2]$ for all $ k \in \R^{N-1}$, then there exist
	fundamental systems $\{v^{(1)}_+, v^{(2)}_+\}$ and $\{v^{(1)}_-, v^{(2)}_-\}$ of the homogeneous version of equation
	\eqref{3} on $[0, \infty)$ and on $(-\infty, 0]$, respectively, such
	that, for some ($k$-dependent) constants $\nu_\pm > 0$,
	$C_{v,1,\pm}$, $C'_{v,1,\pm}$, $C_{v,2,\pm}$, $C'_{v,2,\pm} \ge 0$,
	\begin{align*}
		|v^{(1)}_\pm(x)| &\le C_{v,1,\pm}\,e^{\nu_\pm x},
		&|v^{(1)'}_\pm(x)| &\le C'_{v,1,\pm}\,e^{\nu_\pm x},
		\\
		|v^{(2)}_\pm(x)| &\le C_{v,2,\pm}\,e^{-\nu_\pm x},
		&|v^{(2)'}_\pm(x)| &\le C'_{v,2,\pm}\,e^{-\nu_\pm x}
	\end{align*}
	for all $x \in \mathbb R_\pm$
	and
	$v^{(1)}_+ \notin L^2(\R_+)$, $v^{(2)}_- \notin L^2(\R_-)$.
	Consider $d$ and $\tau_\pm$ defined in \eqref{E:d} and \eqref{E:D_Dtil} resp., and define 
\beq\label{E:delta}
\delta:=\dist(\Ran(\omega^2 \perm(\cdot,\omega), \R_+).
\eeq 
(Note that $\delta>0$ since $\omega \notin \Omega$.)

Then, as $|k|\to\infty$,
	\begin{align*}
		C_{v,1,\pm} &= C_{v,2,\pm} = \frac{|k|}{\sqrt{\delta}} + O(1),
		\\
		C'_{v,1,\pm} &= C'_{v,2,\pm} = \frac{|k|^2}{\sqrt{\delta}} + O(|k|),
		\\
		\tau_\pm &= 2|k| + O(1),\\
		\nu_\pm & = |k| + O\left(\frac{1}{|k|}\right),
	\end{align*}
	and
	\begin{align*}
		d = \frac{\perm_-(0) + \perm_+(0)}{\sqrt{\perm_+(0)\,\perm_-(0)}}\,|k| + O(1);
	\end{align*}
	if $\perm_-(0) + \perm_+(0) = 0$, then we have the more precise asymptotics
	\begin{equation*}
		d = \pm \ri\,\frac {\perm_+'(0) - \perm_-'(0)}{2\,\perm_+(0)} + O\left(\frac{1}{|k|}\right).
	\end{equation*}
\end{lem}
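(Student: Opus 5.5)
The plan is to reduce everything to the Schr\"odinger form of Lemma \ref{lem:v-eqn} and then use the Floquet estimates of Lemmas \ref{lem:LemmaA} and \ref{lem:LemmaB}.

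\textbf{Reduction.} Write $W_\pm=\omega^2\perm_\pm$. Since $\omega\notin\Omega$ we have $\dist(W_\pm(x),\R_+)\ge\delta>0$, hence $|W_\pm|\ge\delta$. Because $\perm_\pm\in W^{3,\infty}$, the potential
$$V_\pm:=W_\pm-\tfrac{3W_\pm'^2}{4W_\pm^2}+\tfrac{W_\pm''}{2W_\pm}$$
is bounded and $a_\pm$-periodic. Then \eqref{E:z} is \eqref{E:lV} with $l=|k|$. By the preceding lemma on monodromy matrices, its discriminant equals $D^{(v)}_\pm(k)\notin[-2,2]$. So there are Floquet solutions $z^{(1)}_\pm=e^{\kappa_\pm x}p_{1,\pm}$ and $z^{(2)}_\pm=e^{-\kappa_\pm x}p_{2,\pm}$ with $\Real\kappa_\pm>0$.

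\textbf{Construction of $v^{(j)}_\pm$.} Define $v^{(j)}_\pm$ from $z^{(j)}_\pm$ via \eqref{E:vz}. The computation in the proof of Lemma \ref{lem:v-eqn} gives
$$v^{(j)'}_\pm=\frac{|k|^2-W_\pm}{\sqrt{W_\pm}}\,z^{(j)}_\pm,\qquad v^{(j)}_\pm=\frac{1}{\sqrt{W_\pm}}\Bigl(z^{(j)'}_\pm+\beta_\pm z^{(j)}_\pm\Bigr),\quad \beta_\pm:=\frac{W_\pm'}{2W_\pm}.$$
These two solutions are linearly independent, so they form a fundamental system of the homogeneous equation \eqref{3}, and we set $\nu_\pm:=\Real\kappa_\pm$.

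\textbf{Bounds for bounded $|k|$.} Here the exponential bounds and the non-$L^2$ property hold with some finite $k$-dependent constants. For the latter, $v^{(1)'}_+=e^{\kappa_+x}\times(\text{nonzero periodic})$ cannot be in $L^2$ together with $v^{(1)}_+$; indeed, if $v^{(1)}_+\in L^2$ then \eqref{E:vz} and the ODE force $z^{(1)}_+$ to decay.

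\textbf{Bounds for large $|k|$.} Lemma \ref{lem:LemmaA} gives $\kappa_\pm=|k|+O(1/|k|)$, hence the claimed $\nu_\pm$. I will renormalise $p_{j,\pm}$ so that $p_{j,\pm}(0)=1$ rather than having mean one. This is harmless, since by Lemma \ref{lem:LemmaB} we have $p_{j,\pm}(0)=1+O(1/|k|)$, so the bounds $\|p_{j,\pm}-1\|_\infty,\|p_{j,\pm}'\|_\infty=O(1/|k|)$ persist. Then:
\begin{itemize}
\item $|z^{(j)'}_\pm+\beta_\pm z^{(j)}_\pm|\le(|k|+O(1))e^{\pm\nu_\pm x}$ together with $|W_\pm|^{-1/2}\le\delta^{-1/2}$ gives $C_{v,j,\pm}=|k|/\sqrt\delta+O(1)$.
\item Similarly $C'_{v,j,\pm}=|k|^2/\sqrt\delta+O(|k|)$.
\item The Wronskian simplifies exactly, without any branch issues: $\tau_\pm=z^{(1)'}_\pm z^{(2)}_\pm-z^{(1)}_\pm z^{(2)'}_\pm$. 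Evaluated at $0$ this is $2\kappa_\pm+p_{1,\pm}'(0)-p_{2,\pm}'(0)=2|k|+O(1)$, up to the sign convention fixed by the orientation in \eqref{E:D_Dtil}.
\end{itemize}

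\textbf{The quantity $d$.} With the normalisation $p(0)=1$ we have, using $\frac{W}{|k|^2-W}v'=\sqrt W z$,
$$v^{(2)}_+(0)=\frac{-\kappa_++p_{2,+}'(0)+\beta_+(0)}{\sqrt{W_+(0)}},\qquad \frac{W_-(0)}{|k|^2-W_-(0)}\,v^{(1)'}_-(0)=\sqrt{W_-(0)},$$
and symmetrically for the other product. Substituting into \eqref{E:d} gives
$$d=|k|\Bigl(\tfrac{\sqrt{W_-}}{\sqrt{W_+}}+\tfrac{\sqrt{W_+}}{\sqrt{W_-}}\Bigr)+\Bigl(\beta_-\tfrac{\sqrt{W_+}}{\sqrt{W_-}}-\beta_+\tfrac{\sqrt{W_-}}{\sqrt{W_+}}\Bigr)+O(1/|k|),$$
all evaluated at $0$ and with signs to be checked carefully. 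The $p'$ terms are only $O(1/|k|)$ because they are not multiplied by $|k|$, and $\kappa_\pm-|k|=O(1/|k|)$. The first bracket equals $(W_++W_-)/(\sqrt{W_+}\sqrt{W_-})$, which gives the leading term after cancelling $\omega^2$. When $\perm_++\perm_-=0$ at $0$, the ratio $\sqrt{W_-}/\sqrt{W_+}$ equals $\pm\ri$. The second bracket then reduces to $\pm\ri(\beta_+-\beta_-)$, i.e.\ $\pm\ri\,(\perm_+'(0)-\perm_-'(0))/(2\perm_+(0))$, using $\perm_-(0)=-\perm_+(0)$.

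\textbf{Main obstacle.} I expect the hardest step to be this bookkeeping of $d$ to order $O(1)$. There are two difficulties. First, the normalisation $p(0)=1$ must be used so that the $O(1/|k|)$ errors in $p(0)$ are not amplified by the factor $|k|$. Second, the complex square-root branches must be tracked: one has to verify that $\sqrt{W_+}\sqrt{W_-}$ matches $\omega^2\sqrt{\perm_+\perm_-}$ with the stated branch, and in the degenerate case determine which sign of $\pm\ri$ occurs.
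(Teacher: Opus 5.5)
Your proof follows the paper's: pass to the Schr\"odinger form \eqref{E:z}, take the Floquet solutions controlled by Lemmas \ref{lem:LemmaA} and \ref{lem:LemmaB}, recover $v^{(j)}_\pm$ via \eqref{E:vz}, and read off $C_{v,j,\pm}$, $C'_{v,j,\pm}$, $\nu_\pm$, $\tau_\pm=z^{(1)'}z^{(2)}-z^{(1)}z^{(2)'}$ and $d$ at $x=0$. The only deviation is that you normalise $p_j(0)=1$ instead of to mean one. That is harmless, but it is not needed. In the paper's computation both $\kappa$-terms of $d$ carry the common factor $\widehat p_1(0)p_2(0)$. They therefore still cancel exactly when $\perm_+(0)+\perm_-(0)=0$, and otherwise cost only $O(1)$.

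There are two corrections in your treatment of $d$.

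\emph{Sign slip in the degenerate case.} Put $s:=\sqrt{W_-(0)}/\sqrt{W_+(0)}=\pm\ri$, so $1/s=-s$. Your (correct) second bracket then equals $-s\,(\beta_+(0)+\beta_-(0))$, not $\pm\ri(\beta_+-\beta_-)$. It is the sum $\beta_++\beta_-$ that equals $(\perm_+'(0)-\perm_-'(0))/(2\perm_+(0))$ when $\perm_-(0)=-\perm_+(0)$. The difference $\beta_+-\beta_-$ would give $\perm_+'+\perm_-'$ instead. Your final formula is right, but the step labelled ``i.e.'' is not.

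\emph{Branch sign.} The identity $\sqrt{W_+}\sqrt{W_-}=\omega^2\sqrt{\perm_+\perm_-}$ cannot be verified in general; it holds only up to sign. For example, with $\omega=1$ and $\perm_\pm(0)=-1$, the left side is $\ri\cdot\ri=-1$ and the right side is $1$. In that case one finds $d=2\kappa\approx+2|k|$, whereas the stated leading coefficient gives $-2|k|$. So the leading coefficient of $d$ is determined only up to sign; the paper's $\sqrt{W_+(0)W_-(0)}$ glosses over the same point. This does not matter for the application, because only $|d|$ enters $\alpha_2^\pm$ and $\alpha_3^\pm$.
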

\begin{proof}
	To shorten the notation we define again $$W_\pm:= \omega^2\perm_\pm.$$
	We first focus on the interval $[0, \infty)$. Recall from Lemma \ref{lem:v-eqn} that equation (\ref{3}) with $r=0$ can be rewritten as \eqref{E:z}, i.e.,
	equation (\ref{E:lV}) for $z$ with potential $V=W_\pm -\frac{3}{4}\left(\frac{W_\pm'}{W_\pm}\right)^2 + \frac{1}{2}\frac{W_\pm''}{W_\pm}$ and $l=|k|$.
	We have Floquet solutions of
	equation (\ref{E:lV}) on $\mathbb R_+$  as in Lemma \ref{lem:LemmaA} and Lemma \ref{lem:LemmaB}. Due to the condition on the discriminant $D^{(v)}_+$, these solutions have the required exponential
	behaviour with $\nu_+ = \Real \kappa(|k|) > 0$, see equation \eqref{eq:6}.
	
	Now consider $|k| \ge k_0$, where $k_0$ is the larger of the numbers $l_0$ of
	Lemma \ref{lem:LemmaA} and Lemma \ref{lem:LemmaB} applied to equation (\ref{E:lV}) on
	$\mathbb R_+$. Lemma \ref{lem:LemmaA} immediately gives the asymptotics $\nu_+ = |k| + O\left(\frac{1}{|k|}\right).$

	The solution $z(x) = e^{\kappa(|k|)\,x}\,p_1(x)$ $(x \ge 0)$ gives the solution $v^{(1)}_+$ of equation (\ref{3}):
	\begin{equation*}
		v^{(1)}_+(x) = e^{\kappa(|k|)\,x}\,\frac 1 {\sqrt{W_+}}\,\left(\kappa(|k|)\,p_1(x) + p_1'(x) + \frac{W_+'}{2 W_+}\,p_1(x)\right)
	\end{equation*}
	with derivative
	\begin{equation*}
		v^{(1)'}_+(x) = \frac{|k|^2 - W_+}{\sqrt{W_+}}\,e^{\kappa(|k|)\,x}\,p_1(x).
	\end{equation*}
	Consequently we have by Lemma \ref{lem:LemmaA} and Lemma \ref{lem:LemmaB} for sufficiently large $|k|$, with $\nu_+ = \Real\kappa(|k|)$
	\begin{align*}
		|v^{(1)}_+(x)| &\le e^{\nu_+ x}\,\frac 1{|\sqrt{W_+}|}\,\left(\left(|\kappa(|k|)| + \frac 1 2\,\left|\frac{W_+'}{W_+}\right|\right)|p_1(x)| + |p_1'(x)|\right)
		\\
		&\le e^{\nu_+ x}\,\frac 1{\sqrt{\delta}}\left(\left(|k| + \frac{\c}{|k|} + \frac 1 2 \left\|\frac{W_+'}{W_+}\right\|_\infty\right) \left(1 + \frac{\c}{|k|}\right) + \frac{\c}{|k|}\right)
	\end{align*}
	which gives the asserted asymptotics of $C_{v,1,+}$. Similarly,
	\begin{align*}
		|v^{(1)'}_+(x)| &\le \frac{||k|^2 - W_+|}{|\sqrt{W_+}|}\,e^{\nu_+ x}\,|p_1(x)|
		\le e^{\nu_+ x}\,\frac{|k|^2 + \|W_+\|_\infty}{\sqrt{\delta}}\,\left(1 + \frac{\c}{|k|}\right)
	\end{align*}
	for all $x \ge 0$, proving the asymptotics of $C'_{v,1,+}$.
	
	Next, the solution
	$z(x) = e^{-\kappa(|k|)\,x}\,p_2(x)$
	gives another solution $v^{(2)}_+$ of equation \eqref{3} for $x\geq 0$:
	\begin{align*}
		v^{(2)}_+(x) &= e^{-\kappa(|k|)\,x}\,\frac 1 {\sqrt{W_+}}\left(-\kappa(|k|)\,p_2(x) + p_2'(x) + \frac{W_+'}{2W_+}\,p_2(x)\right),
		\\
		v^{(2)'}_+(x) &= \frac{|k|^2 - W_+}{\sqrt{W_+}}\,e^{-\kappa(|k|)\,x}\,p_2(x).
	\end{align*}
	Again we find for sufficiently large $|k|$ and $x\geq 0$ that
	\begin{align*}
		|v^{(2)}_+(x)| &\le e^{-\nu_+ x}\,\frac 1{|\sqrt{W_+}|}\left(\left(|\kappa(|k|)| + \frac 1 2 \left|\frac{W_+'}{W_+}\right|\right) |p_2(x)| + |p_2'(x)|\right)
		\\
		&\le e^{-\nu_+ x}\,\frac 1{\sqrt \delta}\,\left(\left(|k| + \frac{\c}{|k|} + \frac 1 2\,
		\left\|\frac{W_+'}{W_+}\right\|_\infty\right) \left(1 + \frac{\c}{|k|}\right) + \frac{\c}{|k|} \right)
	\end{align*}
	and
	\begin{equation*}
		|v^{(2)'}_+(x)| \le e^{-\nu_+ x}\,\frac{|k|^2 + \|W_+\|_\infty}{\sqrt{\delta}}\,\left(1 + \frac{\c}{|k|}\right).
	\end{equation*}
	These last two inequalities prove the asserted asymptotics of $C_{v,2,+}$ and $C'_{v,2,+}$.
	The Wronski determinant is
	\begin{align*}
		\tau_+ &= (v^{(1)}_+\,v^{(2)'}_+ - v_+^{(1)'}\,v^{(2)}_+)(0)\,\frac{W_+(0)}{|k|^2 - W_+(0)}
		\\
		&= \frac 1{\sqrt{W_+(0)}} \left(\kappa(|k|)\,p_1(0) + p_1'(0) + \frac{W_+'(0)}{2 W_+(0)}\,p_1(0)\right)\frac{|k|^2 - W_+(0)}{\sqrt{W_+(0)}}\,p_2(0)\,\frac{W_+(0)}{|k|^2 - W_+(0)}
		\\&\qquad
		- \frac{|k|^2 - W_+(0)}{\sqrt{W_+(0)}}\,p_1(0)\,\frac 1{\sqrt{W_+(0)}}\left(-\kappa(|k|)\,p_2(0) + p_2'(0) + \frac{W'_+(0)}{2 W_+(0)}\,p_2(0)\right)\frac{W_+(0)}{|k|^2 - W_+(0)}
		\\
		&= 2\kappa(|k|)\,p_1(0)\,p_2(0) + p_1'(0)\,p_2(0) - p_1(0)\,p_2'(0)
		\\
		&= 2|k|\,\left(1 + O\left(\frac 1{|k|^2}\right)\right)\,\left(1 + O\left(\frac{1}{|k|}\right)\right)^2 + 2\,\left(1 + O\left(\frac{1}{|k|}\right)\right)\,O\left(\frac{1}{|k|}\right)
		\\
		&= 2|k| + O(1).
	\end{align*}
	Analogous estimates apply to $v^{(1)}_-$, $v^{(2)}_-$, $v^{(1)'}_-$, $v^{(2)'}_-$, and $\tau_-$ but
	with different $\kappa(|k|)$ (which has real part $\nu_-$) and $W_-$ instead of
	$W_+$, thus proving the asserted asymptotics for $C_{v,1,-}$, $C'_{v,1,-}$, $C_{v,2,-}$ and $C'_{v,2,-}$.
	Finally, denoting by $\widehat p_1$, $\widehat p_2$ the periodic functions which arise
	on $(-\infty, 0]$ as counterparts of $p_1, p_2$ on $[0, \infty)$, we find
	\begin{align*}
		d &= -v^{(2)}_+(0)\,v^{(1)'}_-(0)\,\frac{W_-(0)}{|k|^2 - W_-(0)}
		+ v^{(1)}_-(0)\,v^{(2)'}_+(0)\,\frac{W_+(0)}{|k|^2 - W_+(0)}
		\\
		&= -\frac 1 {\sqrt{W_+(0)}}\left(-\kappa_+(|k|)\,p_2(0) + p_2'(0) + \frac 1 2\,
		\frac{W'_+(0)}{W_+(0)}\,p_2(0)\right)\frac{|k|^2 - W_-(0)}{\sqrt{W_-(0)}}\,\widehat p_1(0)\,\frac{W_-(0)}{|k|^2 - W_-(0)}
		\\&\qquad
		+ \frac 1 {\sqrt{W_-(0)}}\left(\kappa_-(|k|)\,\widehat p_1(0) + \widehat p_1'(0) + \frac 1 2\,\frac{W'_-(0)}{W_-(0)}\,\widehat p_1(0)\right) \frac{|k|^2 - W_+(0)}{\sqrt{W_+(0)}}\,p_2(0)\,\frac{W_+(0)}{|k|^2 - W_+(0)}
		\\
		&= \left(\frac{\sqrt{W_-(0)}}{\sqrt{W_+(0)}}\,\kappa_+(|k|) + \frac{\sqrt{W_+(0)}}{\sqrt{W_-(0)}}\,\kappa_-(|k|)\right) \widehat p_1(0)\,p_2(0)
		- \frac{\sqrt{W_-(0)}}{\sqrt{W_+(0)}}\,p_2'(0)\,\widehat p_1(0)
		+ \frac{\sqrt{W_+(0)}}{\sqrt{W_-(0)}}\,\widehat p_1'(0)\,p_2(0)
		\\&\qquad
		-\frac{\sqrt{W_-(0)}}{\sqrt{W_+(0)}}\,\frac 1 2\,\frac{W'_+(0)}{W_+(0)}\,p_2(0)\,\widehat p_1(0)
		+ \frac{\sqrt{W_+(0)}}{\sqrt{W_-(0)}}\,\frac 1 2\,\frac{W'_-(0)}{W_-(0)}\,\widehat p_1(0)\,p_2(0).
	\end{align*}
	If $W_+(0) + W_-(0) \neq 0$, we hence obtain
	\begin{align*}
		d&=\left(\frac{\sqrt{W_-(0)}}{\sqrt{W_+(0)}}\,|k|\,\left(1 + O\left(\frac 1{|k|^2}\right)\right)
		+ \frac{\sqrt{W_+(0)}}{\sqrt{W_-(0)}}\,|k|\,\left(1 + O\left(\frac 1{|k|^2}\right)\right) \right) \left(1 + O\left(\frac{1}{|k|}\right)\right)^2
		\\&\qquad
		- \frac{\sqrt{W_-(0)}}{\sqrt{W_+(0)}}\,\left(1 + O\left(\frac{1}{|k|}\right)\right)\,O\left(\frac{1}{|k|}\right)
		+ \frac{\sqrt{W_+(0)}}{\sqrt{W_-(0)}}\,\left(1 + O\left(\frac{1}{|k|}\right)\right)\,O\left(\frac{1}{|k|}\right)
		\\&\qquad
		- \frac{\sqrt{W_-(0)}}{\sqrt{W_+(0)}}\,\frac 1 2\,\frac{W'_+(0)}{W_+(0)}\,\left(1 + O\left(\frac{1}{|k|}\right)\right)^2
		+ \frac{\sqrt{W_+(0)}}{\sqrt{W_-(0)}}\,\frac 1 2\,\frac{W'_-(0)}{W_-(0)}\,\left(1 + O\left(\frac{1}{|k|}\right)\right)^2
		\\
		&= \frac{W_-(0) + W_+(0)}{\sqrt{W_+(0)\,W_-(0)}}\,|k| + O(1).
	\end{align*}
	In the case $W_+(0) + W_-(0) = 0$, we have
	\begin{equation*}
		\frac{\sqrt{W_-(0)}}{\sqrt{W_+(0)}} = -
		\frac{\sqrt{W_+(0)}}{\sqrt{W_-(0)}} \in \{\ri,-\ri\},
	\end{equation*}
	so
	\begin{align*}
		d&= \pm \ri \Bigg( (\kappa_+(|k|) - \kappa_-(|k|)) \,\widehat p_1(0)\,p_2(0)
		- p_2'(0)\,\widehat p_1(0) - \widehat p_1'(0)\,p_2(0)
		\\
		&\qquad\qquad\qquad
		- \frac 1 2 \left(\frac{W_+'(0)}{W_+(0)} + \frac{W_-'(0)}{W_-(0)}\right) p_2(0) \widehat p_1(0) \Bigg)
		\\
		&= \mp \ri\,\frac{W_+'(0) - W_-'(0)}{2\,W_+(0)}\,\left(1 + O\left(\frac{1}{|k|}\right)\right) + O\left(\frac{1}{|k|}\right).
	\end{align*}
\end{proof}

Lemma \ref{L:est-v-resolv} shows that $\omega \in \Sc_p$ satisfies (i) and the estimates of the fundamental system in (ii) of Definition \ref{D:S}. Next, we prove that also for equation \eqref{4} there is a fundamental system satisfying the corresponding properties of Definition \ref{D:S}. The proof is less technical than for the $v$-component treated by Lemma \ref{L:est-v-resolv}.
\begin{lem}\label{L:est-w-resolv}
	Let $\omega\in D(\perm)\setminus \Omega$ and assume \eqref{E:ass-perm}.
	If the discriminants $D^{(w)}_+(k), D^{(w)}_-(k) \notin [-2, 2]$ for all $k \in \R^{N-1}$, then
	there exist fundamental systems $\{w^{(1)}_+, w^{(2)}_+\}$ and
	$\{w^{(1)}_-, w^{(2)}_-\}$ of equation \eqref{4} on
	$[0, \infty)$ and on $(-\infty, 0]$, respectively such that, for some
	($k$-dependent) constants
	$\widetilde\nu_\pm > 0$, $C_{w,1,\pm}, C'_{w,1,\pm}, C_{w,2,\pm}, C'_{w,2,\pm} \ge 0$,
	\begin{align*}
		|w^{(1)}_\pm(x)| &\le C_{w,1,\pm}\,e^{\widetilde\nu_\pm x},
		\qquad
		|w^{(1)'}_\pm(x)| \le C'_{w,1,\pm}\,e^{\widetilde\nu_\pm x},
		\\
		|w^{(2)}_\pm(x)| &\le C_{w,2,\pm}\,e^{-\widetilde\nu_\pm x},
		\qquad
		|w^{(2)'}_\pm(x)| \le C'_{w,2,\pm}\,e^{-\widetilde\nu_\pm x}
	\end{align*}
	for all $x \in \mathbb R_\pm$
	and
	$w^{(1)}_+ \notin L^2(\R_+)$, $w^{(2)}_- \notin L^2(\R_-)$.
	Consider $\widetilde{d}$ and $\widetilde{\tau}_\pm$ defined in \eqref{E:dtil} and \eqref{E:D_Dtil} resp. Then, as $k \to\infty$,
	\begin{align*}
		C_{w,1,\pm} &= C_{w,2,\pm} = 1 + O\left(\frac{1}{|k|}\right),
		\\
		C'_{w,1,\pm} &= C'_{w,2,\pm} = |k| + O(1),
		\\
		\widetilde \tau_\pm &= -2|k| + O(1),\\
		\widetilde{\nu}_\pm &= |k| + O\left(\frac{1}{|k|}\right),
	\end{align*}
	and
	\begin{equation*}
		\widetilde d = -2|k| + O(1).
	\end{equation*}
\end{lem}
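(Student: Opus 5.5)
Since \eqref{4} with $r=0$ is exactly the Schr\"odinger-type equation \eqref{E:lV} with $l=|k|$ and $V=\omega^2\perm_\pm(\cdot,\omega)$, the plan is to mirror the proof of Lemma \ref{L:est-v-resolv} without the transformation of Lemma \ref{lem:v-eqn}. Here $V$ is bounded and $a_\pm$-periodic by \eqref{E:ass-perm} and the periodicity standing assumption of this subsection. Because $D^{(w)}_\pm(k)\notin[-2,2]$, the Floquet exponent $\kappa_\pm(|k|)$ has positive real part. We take on $[0,\infty)$
$$w^{(1)}_+(x)=e^{\kappa_+ x}p_1(x),\qquad w^{(2)}_+(x)=e^{-\kappa_+ x}p_2(x),$$
and analogously on $(-\infty,0]$ with $\kappa_-$, $\widehat p_1$, $\widehat p_2$. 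We set $\widetilde\nu_\pm=\Real\kappa_\pm>0$. The estimates then hold with $C_{w,j,\pm}=\|p_j\|_\infty$ and $C'_{w,j,\pm}=|\kappa_\pm|\,\|p_j\|_\infty+\|p_j'\|_\infty$. The non-integrability $w^{(1)}_+\notin L^2(\R_+)$ and $w^{(2)}_-\notin L^2(\R_-)$ follows from $|e^{\kappa_+ x}|=e^{\widetilde\nu_+x}$ and the fact that a nontrivial periodic continuous $p_j$ has $|p_j|\ge c>0$ on a set of positive measure in each period. For bounded $k$ no asymptotics are needed, so this already settles existence.

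For $|k|\ge k_0:=\max$ of the $l_0$ from Lemmas \ref{lem:LemmaA} and \ref{lem:LemmaB}, we normalize $p_1,p_2$ as in \eqref{eq:26}. Lemma \ref{lem:LemmaA} gives $\kappa_\pm=|k|+O(1/|k|)$, hence $\widetilde\nu_\pm=|k|+O(1/|k|)$. Lemma \ref{lem:LemmaB} gives $\|p_j-1\|_\infty,\|p_j'\|_\infty=O(1/|k|)$. These yield $C_{w,j,\pm}=1+O(1/|k|)$ and $C'_{w,j,\pm}=(|k|+O(1/|k|))(1+O(1/|k|))+O(1/|k|)=|k|+O(1)$.

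For the Wronskian we compute at $x=0$:
$$\widetilde\tau_+=p_1(0)\bigl(-\kappa_+p_2(0)+p_2'(0)\bigr)-\bigl(\kappa_+p_1(0)+p_1'(0)\bigr)p_2(0)=-2\kappa_+p_1(0)p_2(0)+O(1/|k|),$$
which equals $-2|k|+O(1)$. The same computation on $\R_-$ gives $\widetilde\tau_-$. For $\widetilde d$, insert $w^{(2)}_+(0)=p_2(0)$, $w^{(1)}_-(0)=\widehat p_1(0)$, $w^{(1)'}_-(0)=\kappa_-\widehat p_1(0)+\widehat p_1'(0)$ and $w^{(2)'}_+(0)=-\kappa_+p_2(0)+p_2'(0)$ into \eqref{E:dtil}. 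This gives
$$\widetilde d=-(\kappa_++\kappa_-)\,p_2(0)\widehat p_1(0)+O(1/|k|)=-2|k|+O(1).$$

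The only point requiring care is the $(-\infty,0]$ side, where Lemmas \ref{lem:LemmaA}--\ref{lem:LemmaB} are applied to the reflected equation. Under the reflection $x\mapsto -x$ the roles of the growing and decaying Floquet solutions swap. One must check that $w^{(1)}_-=e^{\kappa_- x}\widehat p_1$ decays as $x\to-\infty$ and is bounded by $C e^{\widetilde\nu_- x}$ on $\R_-$, and that the normalization and bounds of Lemma \ref{lem:LemmaB} transfer unchanged, since the sup norms of periodic functions are invariant under reflection. With that checked, the remaining steps are routine bookkeeping of $O(\cdot)$ terms.
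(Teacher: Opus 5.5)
Your proposal is correct and follows the paper's proof essentially step for step. It uses the Floquet solutions $e^{\pm\kappa_\pm x}p_j$ with $\widetilde\nu_\pm=\Real\kappa_\pm$, takes the bounds on $\kappa_\pm$, $p_j$, $p_j'$ from Lemmas \ref{lem:LemmaA} and \ref{lem:LemmaB} with $l=|k|$, and evaluates $\widetilde\tau_\pm$ and $\widetilde d$ directly at $x=0$; the reflection you flag on $(-\infty,0]$ is harmless but unnecessary, because both lemmas apply directly to the periodic extension of $\omega^2\perm_-$ to $\R$, whose Floquet solution $e^{\kappa_-x}\widehat p_1$ already decays at $-\infty$, and this is how the paper treats that side.
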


\begin{proof}
	Just like in Lemma \ref{L:est-v-resolv}  we set $\widetilde\nu_\pm = \Real \kappa_\pm(|k|) > 0$, see equation \eqref{eq:6}. Note that equation \eqref{4}  has the form of (\ref{E:lV}) with $V = \omega^2\perm_\pm(\omega)$ and $l = |k|^2$. The asymptotics of $\widetilde{\nu}_\pm$ follow immediately from Lemma \ref{lem:LemmaA}.
	
	Next, we focus on the interval $\mathbb R_+$. We have, with periodic
	functions $p_1, p_2$ as in Lemma \ref{lem:LemmaB},
	\begin{equation*}
		w^{(1)}_+(x) = e^{\kappa_+(|k|) x}\,p_1(x),
		\quad
		w^{(1)'}_+(x) = (\kappa_+(|k|)\,p_1(x) + p_1'(x))\,e^{\kappa_+(|k|)\,x}
		\quad
		(x \ge 0),
	\end{equation*}
	and hence
	\begin{align*}
		|w^{(1)}_+(x)| &= e^{\widetilde\nu_+ x}\,|p_1(x)| \le e^{\widetilde\nu_+ x}\,(1 + |p_1(x) - 1|)
		\\
		&\le e^{\widetilde\nu_+ x}\,\left(1 + \frac{\c}{|k|}\right),
		\\
		|w^{(1)'}_+(x)| &\le e^{\widetilde\nu_+ x}\,(|\kappa_+(|k|)|\,|p_1(x)| + |p_1'(x)|)
		\\
		&\le e^{\widetilde\nu_+ x}\,\left(\left(|k| + \frac{\c}{|k|}\right)\,\left(1 + \frac{\c}{|k|}\right) + \frac{\c}{|k|}\right)
		\le e^{\widetilde\nu_+ x}\,(|k| + \c)
	\end{align*}
	for all $x \le 0$ by Lemma \ref {lem:LemmaA} and \ref{lem:LemmaB}.
	Similarly,
	\begin{equation*}
		w^{(2)}_+(x) = e^{-\kappa_+(|k|)\,x}\,p_2(x), \quad
		w^{(2)'}_+(x) = (-\kappa_+(|k|)\,p_2(x) + p_2'(x))\,e^{-\kappa_+(|k|)\,x}
		\quad (x \ge 0),
	\end{equation*}
	so
	\begin{align*}
		|w^{(2)}_+(x)| &= e^{-\widetilde\nu_+ x}\,|p_2(x)| \le e^{-\widetilde\nu_+ x}\,\left(1 + \frac{\c}{|k|}\right),
		\\
		|w^{(2)'}_+(x)| &\le e^{-\widetilde\nu_+ x}\,(|\kappa_+(|k|)|\,|p_2(x)| + |p_2'(x)|)
		\\
		&\le e^{-\widetilde\nu_+ x}\,\left(\left(|k| + \frac{\c}{|k|}\right)\,\left(1 + \frac{\c}{|k|}\right) + \frac{\c}{|k|}\right)
		\le e^{-\widetilde\nu_+ x}\,(|k| + \c)
	\end{align*}
	for all $x \ge 0$. These estimates prove the asymptotics of $C_{w,1,+}$, $C'_{w,1,+}$, $C_{w,2,+}$ and $C'_{w,2,+}$.
	For the Wronski determinant we obtain
	\begin{align*}
		\widetilde \tau_+ &= w^{(1)}_+(0) w^{(2)'}_+(0) - w^{(1)'}_+(0)  w^{(2)}_+(0)
		\\
		&= -2\kappa_+(|k|)\,p_1(0)\,p_2(0) + p_1(0)\,p_2'(0) - p_1'(0)\,p_2(0)
		\\&= -2|k|\,\left(1 + O\left(\frac{1}{|k|^2}\right)\right)\,\left(1 + O\left(\frac{1}{|k|}\right)\right)^2
		+ 2\,\left(1 + O\left(\frac{1}{|k|}\right)\right)\,O\left(\frac{1}{|k|}\right)
		\\
		&= -2|k| + O(1).
	\end{align*}
	Analogous estimates apply to $w^{(1)}_-$, $w^{(2)}_-$, $w^{(1)'}_-$, $w^{(2)'}_-$, and $\widetilde{\tau}_-$ but with $\widetilde\nu_- = \Real \kappa_-(|k|)$.
	Finally,
	\begin{align*}
		\widetilde d &= - w^{(2)}_+(0)\,w^{(1)'}_-(0) + w^{(1)}_-(0)\,w^{(2)'}_+(0)
		\\
		&= -\left(1 + O\left(\frac{1}{|k|}\right)\right)\,\left(|k|\,\left(1 + O\left(\frac 1{|k|^2}\right)\right)\,\left(1 + O\left(\frac{1}{|k|}\right)\right) + O\left(\frac{1}{|k|}\right)\right)
		\\
		&\qquad
		+ \left(-|k|\,\left(1 + O\left(\frac 1{|k|^2}\right)\right)\,\left(1 + O\left(\frac{1}{|k|}\right)\right) + O\left(\frac{1}{|k|}\right)\right)\,\left(1 + O\left(\frac{1}{|k|}\right)\right)
		\\
		&= -2|k| + O(1).
	\end{align*}
\end{proof}

To finish the proof of Proposition \ref{P:Sp-resolv}, it remains to provide (in 1-6 below) $k$-independent bounds of $\alpha^\pm_j, j=1,\dots,6$  from Definition \ref{D:S}.   All the asymptotic expansions below are for $|k|\to\infty$. We use $\delta$ as defined in \eqref{E:delta}.

\noindent 1)
\begin{equation*}
	\alpha_1^\pm = \frac{(\frac{|k|}{\sqrt \delta} + O(1))^2}{(2|k| + O(1))\,(1 + |k|^2)\,(|k| + O\left(\frac{1}{|k|}\right))}
	= \frac 1{2 \delta\,|k|^2}\,(1 + o(1)).
\end{equation*}

\medskip\noindent
2)
If $\perm_+(0) + \perm_-(0) \neq 0$, then
\begin{align*}
	\alpha_2^\pm&= \frac{\frac{|k|}{\sqrt{\delta}} + O(1)}{\left(\left|\frac{\perm_+(0) + \perm_-(0)}{\sqrt{\perm_+(0) \perm_-(0)}}\right| \,|k| + O(1)\right)\,(1 + |k|^2)\,\sqrt{|k|}\,\left(1 + O\left(\frac{1}{|k|}\right)\right)}
	\times
	\\
	&\quad\times
	\left( \left(\frac{|k|}{\sqrt{\delta}} + O(1)\right)\,\left(1 + \frac 1{\sqrt{|k|}\,\left(1 + O\left(\frac{1}{|k|}\right)\right)}\right)\right.
	+ \left. \frac{2 (\frac{|k|}{\sqrt{\delta}} + O(1))^2\,(\frac{|k|^2}{\sqrt{\delta}} + O (|k|))}{(2 |k| + O(1))\,(1 + |k|^2)\,\sqrt{|k|}\,\left(1 + O\left(\frac{1}{|k|}\right)\right)} \right)
	\\
	&= \frac{|\sqrt{\perm_+(0) \perm_-(0)}|}{\sqrt{\delta}\,|\perm_+(0) + \perm_-(0)|}\,|k|^{-5/2}\,(1 + o(1))
	\left(\frac{|k|}{\sqrt{\delta}} + o(|k|) + \frac{\sqrt{|k|}}{\sqrt{\delta}^3}\,(1 + o(1))\right)
	\\
	&= \frac{|\sqrt{\perm_+(0) \perm_-(0)}|}{\delta\,|\perm_+(0) + \perm_-(0)|}\,|k|^{-3/2}\,(1 + o(1)).
\end{align*}
If $\perm_+(0) + \perm_-(0) = 0$ and $\perm_+'(0) - \perm_-'(0) \neq 0$, then
\begin{align*}
	&\alpha_2^+= \frac{2\,|\perm_+(0)|}{\delta\,|\perm_+'(0) - \perm_-'(0)|}\,|k|^{-1/2}\,(1 + o(1)).
\end{align*}


\medskip\noindent
3)
If $\perm_+(0) + \perm_-(0) \neq 0$, then
\begin{align*}
	\alpha_3^\pm&= \frac 1{(1+|k|^3)\,\sqrt{|k|\,(1 + O(\frac 1 {|k|^2}))}}
	\left(\frac{\frac{|k|^2}{\sqrt{\delta}} + O(|k|)}{\left|\frac{\perm_-(0)+\perm_+(0)}{\sqrt{\perm_-(0)\perm_+(0)}}\right|\,|k| + O(1)}
	\left(\left(\frac{|k|}{\sqrt{\delta}} + O(1)\right)\,\left(1 + \frac 1{\sqrt{|k|\,(1 + O(\frac 1{|k|^2}))}}\right) \right. \right.
	\\&\qquad
	+ \left.  \frac{2\,(\frac{|k|}{\sqrt{\delta}} + O(1))^2\,(\frac{|k|^2}{\sqrt{\delta}} + O(|k|))}
	{(2|k| + O(1))\,(1 + |k|^2)\,\sqrt{|k|\,(1 + O(\frac 1{|k|^2}))}} \right)
	+ \left. \frac{2 (\frac{|k|}{\sqrt{\delta}} + O(1))\,(\frac {|k|^2}{\sqrt{\delta}} + O(|k|))}
	{(2 |k| + O(1))\,\sqrt{|k|\,(1 + O(\frac 1{|k|^2}))}} \right)
	\\
	&= \frac{1 + o(1)}{|k|^{7/2}}\left(\frac{|k| +O(1)}{\sqrt{\delta}\left|\frac{\perm_+(0)+\perm_-(0)}{\sqrt{\perm_+(0)\perm_-(0)}}\right|}
	\left(\frac{|k|}{\sqrt{\delta}}\,(1 + o(1)) +\frac{\sqrt{|k|}\,(1 + o(1))}{\sqrt{\delta}^3} \right)
	+ \frac{|k|^{3/2}}{\delta}\,(1 + o(1)) \right)
	\\
	&= \frac{|\sqrt{\perm_+(0) \perm_-(0)}|}{\delta\,|\perm_+(0) + \perm_-(0)|}\,|k|^{-3/2}\,(1 + o(1)).
\end{align*}
If $\perm_+(0) + \perm_-(0) = 0$ and $\perm_+'(0) - \perm_-'(0) \neq 0$, then
\begin{align*}
	\alpha_3^+&= \frac{2\,|\perm_+(0)|}{\delta\,|\perm_+'(0) - \perm_-'(0)|}\,|k|^{-1/2}\,(1 + o(1)).
\end{align*}


\medskip\noindent
4)
\begin{equation*}
	\alpha_4^\pm=
	\frac{\left(1 + O\left(\frac{1}{|k|}\right)\right)^2}{(2|k| + O(1))\,\left(|k| + O\left(\frac{1}{|k|}\right)\right)}\,(1 + |k|)
	= \frac 1{2|k|}\,(1 + o(1)).
\end{equation*}

\medskip\noindent
5)
\begin{align*}
	\alpha_5^\pm&= \frac{\left(1 + O\left(\frac{1}{|k|}\right)\right)\,(1+|k|)}{(2|k| + O(1)) \sqrt{|k| + O\left(\frac{1}{|k|}\right)}}
	\left(2\,\frac{\left(1 + O\left(\frac{1}{|k|}\right)\right)\,(|k| + O(1))}{(2|k| + O(1)) \sqrt{|k| + O\left(\frac{1}{|k|}\right)}}\,\left(1 + O\left(\frac{1}{|k|}\right)\right) + \frac{1 + O\left(\frac{1}{|k|}\right)}{\sqrt{|k| + O\left(\frac{1}{|k|}\right)}} \right)
	\\
	&= \frac{1 + o(1)}{2\sqrt{|k|}} \left(\frac{1 +o(1)}{\sqrt{|k|}} + \frac{1 + o(1)}{\sqrt{|k|}}\right)
	= \frac{1}{|k|}\,(1 + o(1)).
\end{align*}


\medskip\noindent
6)
\begin{align*}
	\alpha_6^\pm
	&= \frac{|k| + O(1)}{(2|k| + O(1)) \sqrt{|k| + O (\frac{1}{|k|})}}
	\left(\frac{2\,\left(1 + O\left(\frac{1}{|k|}\right)\right)\,(|k| + O(1))}{(2|k| + O(1))\,\sqrt{|k| + O\left(\frac{1}{|k|}\right)}}\,\left(1 + O\left(\frac{1}{|k|}\right)\right) + \frac{1 + O\left(\frac{1}{|k|}\right)}{\sqrt{|k| + O\left(\frac{1}{|k|}\right)}} \right)
	\\&\qquad
	+ \frac{2\,\left(1 + O\left(\frac{1}{|k|}\right)\right)\,(|k| + O(1))}{(2|k| + O(1))\,\left(|k| + O\left(\frac{1}{|k|}\right)\right)}
	\\
	&= \frac{1 + o(1)}{2\sqrt{|k|}}\left(\frac{1 + o(1)}{\sqrt{|k|}} + \frac{1 + o(1)}{\sqrt{|k|}}\right) + \frac{1 + o(1)}{|k|}
	= \frac{2}{|k|}\,(1 + o(1)).
\end{align*}



This concludes the proof of Proposition \ref{P:Sp-resolv}.
\epf


\section{Weyl Spectrum} \label{S:Weyl}

We study two sources of the Weyl spectrum, namely radiation orthogonal to and along the interface.
In Sec.~\ref{S:weyl-x} we construct  Weyl sequences with support moving to infinity in the $x_1$-direction. These correspond physically to radiation in Maxwell's equations in the $x_1$-direction, i.e., orthogonal to the interface. In Sec.~\ref{S:Weyl-interf} we construct  Weyl sequences the support of which is localized near the interface $x_1=0$ and moves to infinity in the $x_\parallel$-variables. Recall that $x_\parallel = x_2$ if $N=2$ and $x_\parallel=(x_2,x_3)^\top$ if $N=3$. Physically, these sequences describe radiation along the interface (guided modes).

\subsection{Radiation orthogonal to the interface}\label{S:weyl-x}

We first consider the case of homogeneous media in $\R^N_\pm$ to compare the results with those in \cite{BDPW24} and because the analysis is considerably simpler than in the case of a general $x_1$ dependence of $\perm_\pm$. Later, in Sec. \ref{S:weyl-x1-gen}, we study the case of general $\perm_\pm(\cdot,\omega)$. Finally, in Sec. \ref{S:Wey-x1-per} we apply the general results to the case when $\perm_\pm(\cdot,\omega)$ are periodic or asymptotically periodic.

\subsubsection{Media Homogeneous in $\R^N_\pm$}

Let us consider the case $\perm_\pm(\cdot,\omega) =\perm_\pm(\omega)$ separately. Just as in \cite{BDPW24} for  the one and two dimensional cases, we show that for any $\omega\in \cM_\pm^\mathrm{red}$, defined in \eqref{E:Mpm-red}, a Weyl sequence traveling to $x_1\to\pm\infty$ can be constructed.
\blem\label{L:Weyl-hom-x1}
Assume $\perm_\pm(\cdot,\omega) =\perm_\pm(\omega)$. Then
$\cM_+^\mathrm{red}\cup \cM_-^\mathrm{red}\subset \sigma_{\mathrm{Weyl}}(\cL)$.
\elem
\bpf We study $\omega\in \cM_+^\mathrm{red}$. The case $\omega \in \cM_-^\mathrm{red}$ can be treated in a completely analogous manner.

Recall that a Weyl sequence at $\omega \in D(\perm)$ is a sequence $(E^{(n)})_n\subset \cD_\omega$ such that $\|E^{(n)}\|_{L^2(\R^N)^3}=1 \ \forall n$, $E^{(n)} \rightharpoonup 0$ in $L^2(\R^N)$ and  $\|L(\omega)E^{(n)}\|_{L^2(\R^N)^3}\to 0$ as $n\to\infty$. It is easy to see that one can equivalently check these properties in the Fourier variables. Let $u^{(n)}:=\widehat{E}^{(n)}$, where $~\widehat{\ }~$ is the Fourier transform with respect to the variables $x_\parallel$. Due to the Plancherel identity one has $\|u^{(n)}\|_{L^2(\R^N)^3}=\|E^{(n)}\|_{L^2(\R^N)^3}$ and $\|L(\omega)E^{(n)}\|_{L^2(\R^N)^3}=\|\widehat{L}(\omega)u^{(n)}\|_{L^2(\R^N)^3}$. Moreover, using the Parseval identity, we have $(E^{(n)},\psi)_{L^2(\R^N)^3}=(u^{(n)},\widehat{\psi})_{L^2(\R^N)^3}$ for all $\psi \in L^2(\R^N)^3$. Because $\ \widehat{\ }:L^2(\R^N)\to L^2(\R^N)$ is an isometric isomorphism, we conclude that $E^{(n)} \rightharpoonup 0$ if and only if $u^{(n)} \rightharpoonup 0$.

With the help of Lemma \ref{lem:equiv} we can also work in the transformed variables $\xi:=(u_1,v,w)^\top$ instead of $u$ and with the transformed operator  $\widetilde{L}$,  defined in \eqref{E:Lktil}, instead of $\widehat{L}$.
Applying the above argument again, we can also study the inverse Fourier transform of $\xi$, denoted by $\widecheck{\xi}$. Hence, below we find a sequence $(\xi^{(n)})\subset \widetilde{\cD}_\omega$ with $\xi^{(n)} =(u_1^{(n)},  v^{(n)}, w^{(n)})^\top $ such that $\|\xi^{(n)}\|_{L^2(\R^N)^3}=1$, $\widecheck{\xi}^{(n)} \rightharpoonup 0$ in $L^2(\R^N)^3$ and  $\|(\widetilde{L}(\omega)\xi^{(n)})\widecheck{\phantom{a}}\|_{L^2(\R^N)^3}\to 0$ as $n\to\infty$.

Again, we use the notation $k=k_2$ if $N=2$ and $k=(k_2,k_3)^\top$ if $N=3$. Let $k_0=k_{0,2}$ if $N=2$ and $k_0=(k_{0,2}, k_{0,3})^\top$ if $N=3$ be such that $|k_0|^2<\omega^2\perm_+(\omega)$. Recall that $\omega^2\perm_+(\omega)>0$ due to $\omega \in \cM_+^\mathrm{red}$.	We set $\xi^{(n)} =(u_1^{(n)},  v^{(n)}, w^{(n)})^\top $ with
\beq\label{E:vnwn}
w^{(n)}(x_1,k) := n^{\frac{N}{2}-1} \widehat{\varphi}\left(\frac{x_1-n^2}{n},n(k-k_{0})\right)e^{\ri \mu x_1}\quad\hbox{and}\quad \ u_1^{(n)}=v^{(n)}=0,
\eeq
where
$$\mu =\sqrt{\omega^2\perm_+(\omega)-|k_0|^2} \in \R_+$$
and $\widehat{\varphi}$ is the Fourier transform (with respect to $x_\parallel$) of a scalar-valued function $\varphi\in C_c^\infty(\R^N)$ with $\|\varphi\|_{L^2(\R^N)}=1$.
Note that $e^{\ri \mu x_1}$ solves the homogeneous $w-$equation \eqref{4} (with $k=k_0$) on $\R_+$ and that $w^{(n)}$ is the Fourier-transform in the $x_\parallel$-variables of the truncated plane wave
\beq\label{E:cvncwn}
\widecheck{w}^{(n)}(x) = n^{-\frac N2} e^{\ri\left(k_0 \cdot x_\parallel +\mu x_1\right)} \varphi\left(\frac{x_1-n^2}{n},\frac{x_\parallel}{n}\right).
\eeq

Since $\varphi$ is  compactly supported, for sufficiently large $n$ the function $\xi^{(n)}$ is supported away from the interface $x_1=0$. Hence, it trivially satisfies the interface conditions in $\widetilde{\cD}_\omega$. Due to the normalization of $\varphi$ we have $\|\xi^{(n)}\|_{L^2(\R^N)^3}=1$.
The divergence condition $(u_1^{(n)})'=-\ri|k_0|v^{(n)}$ holds also trivially as $u_1^{(n)}=v^{(n)}=0$.
Finally, the $L^2$-conditions in $\widetilde{\cD}_\omega$ are satisfied due to $\varphi \in C^\infty_c(\R^N)$. We conclude that $\xi^{(n)}\in \widetilde{\cD}_\omega$ for all $n$.

It remains to check that $(\widetilde{L}(\omega)\xi^{(n)})\widecheck{\phantom{a}}\to 0$ and that $\widecheck{\xi}^{(n)}\rightharpoonup 0$. As
$$
\begin{aligned}
	(\widetilde{L}(\omega)\xi^{(n)})\widecheck{\phantom{a}}  =& ~(-\Delta \widecheck{w}^{(n)} -\omega^2\perm_+(\omega)\widecheck{w}^{(n)})\bspm 0\\0\\1 \espm \\
	=&  ~-e^{\ri\left(k_0 \cdot x_\parallel +\mu x_1\right)}\left[n^{-\frac N2-2}\Delta \varphi\left(\frac{x_1-n^2}{n},\frac{x_\parallel}{n}\right)+2\ri n^{-\frac N2-1}\bspm \mu \\k_0\espm \cdot \nabla\varphi\left(\frac{x_1-n^2}{n},\frac{x_\parallel}{n}\right) \right] \bspm 0\\0\\1 \espm,
\end{aligned}
$$
we have
$$\|\widetilde{L}(\omega)\xi^{(n)}\|_{L^2(\R^N)^3}\leq c\left(n^{-2}\|\Delta \varphi\|_{L^2(\R^N)} + n^{-1}\|\nabla \varphi\|_{L^2(\R^N)}\right)\to 0 \ (n\to\infty).$$

Finally, to show $\widecheck{\xi}^{(n)}\rightharpoonup 0$, it suffices to check that
$\int_{\R^N} \eta(x)\widecheck{w}^{(n)}(x)\dd x \to 0 \ \forall \eta \in L^2(\R^N,\R)$. For $n$ large enough we have
$$
\begin{aligned}
	\left|\int_{\R^N} \eta(x)\widecheck{w}^{(n)}(x)\dd x\right| &\leq n^{-\frac N2}\int_{\R^N}\left| \varphi\left(\frac{x_1-n^2}{n},\frac{x_\parallel}{n}\right)\eta(x_1,x_\parallel)\right|\dd x \\
	& =n^{-\frac N2}\int_{[n,\infty)\times \R^{N-1}}\left| \varphi\left(\frac{x_1-n^2}{n},\frac{x_\parallel}{n}\right)\eta(x_1,x_\parallel)\right|\dd x \\
	& \leq  \|\varphi\|_{L^2(\R^N)}\|\eta\|_{L^2((n,\infty)\times \R^{N-1})}\to 0.
\end{aligned}
$$
\epf
\begin{remark}
	In the proof of Lemma \ref{L:Weyl-hom-x1} we construct Weyl sequences only of the simple form $(0,0,w_n)^\top$. 
	Also Weyl sequences of the form $(u_{1,n},v_n,0)^\top$ are conceivable but in view of the fact that Lemma \ref{L:Weyl-hom-x1}, Lemma \ref{L:Weyl-interf-hom} and Lemma \ref{L:pt-spec-hom}  describe the whole spectrum, as explained below Theorem \ref{T:main-homog}, such sequences will generate no additional spectrum outside $\Omega_0$.
\end{remark}

\subsubsection{General $\perm$}\label{S:weyl-x1-gen}

With $\Omega^\pm_a$ for $a>0$ introduced in \eqref{Omegaa}, we define
$$
\begin{aligned}
	\Sigma^\pm_{e,v}&:=\left\{ \omega\in D(\perm): \text{ there is } \ k_0\in \R^{N-1} \ \text{such that}\ \omega \notin \Omega^\pm_{|k_0|^2} \text{ and there exists}\right.\\
	& \qquad \left. \text{a solution} \ v\in C^1_b(\R_\pm) \ \text{of} \ \eqref{3} \ \text{on} \ \R_\pm \ \text{with} \ k=k_0 \ \text{and} \ r=0 \ \text{such that} \ \eqref{E:seq-supp} \ \text{holds}\right\}
\end{aligned}
$$
and $$
\begin{aligned}
	\Sigma^\pm_{e,w}&:=\left\{ \omega\in D(\perm): \text{ there is } \ k_0\in \R^{N-1} \ \text{such that} \   \text{there exists}\ \text{a solution} \ w\in C^1_b(\R_\pm) \ \text{of} \ \eqref{4} \ \text{on} \ \R_\pm\right.\\
	& \qquad \left.  \ \text{with} \ k=k_0 \ \text{and} \ r=0 \ \text{such that} \ \eqref{E:seq-supp} \ \text{holds with} \ w \ \text{instead of} \ v\right\},
\end{aligned}
$$
where condition \eqref{E:seq-supp} reads
\beq\label{E:seq-supp}
\begin{aligned}
	&\exists~\eps,c>0, (R_n)_{n\in \N}\subset \R_\pm, (l_n)_{n\in \N}\subset \R_+: |R_n|, l_n \to\infty, \ l_n\leq \frac{|R_n|}{2}  \   \text{and} \\ &\frac{1}{2l_n}\text{meas}\{x\in [R_n-l_n,R_n+l_n]: |v(x)|\geq \eps\}\geq c \ \forall n \in \N.
\end{aligned}
\eeq
\brem
 The condition $\omega \notin \Omega^\pm_{|k_0|^2}$, i.e., $\dist(\omega^2 \perm_\pm(\R_\pm,\omega), {|k_0|^2})>0$, is not needed in $\Sigma_{e,w}^\pm$ because equation \eqref{4} does not contain $|k_0|^2-\omega^2\perm (x_1,\omega)$ as a denominator.
\erem
\begin{remark}
	Condition \eqref{E:seq-supp} means that one can find a sequence of intervals moving to infinity with their lengths also diverging to infinity and such that the solution $v$ is bounded away from zero on a substantial part of these intervals.
	
	Note that  \eqref{E:seq-supp} (in the ``$+$ case'') is equivalent to the existence of $\eps>0$ such that
	$$\limsup_{\stackrel{R,l\to\infty}{l\leq R/2}}\frac{1}{2l}{\rm meas}\{x\in [R-l,R+l]:|v(x)|\geq \eps\}>0.$$
\end{remark}
\brem\label{rem:seq-supp}
Condition \eqref{E:seq-supp} is generally difficult to check. However, as we show in Section \ref{S:Wey-x1-per}, see Lemma \ref{lem:Weyl-x1-per}, it can be easily satisfied in the case of periodic media.
\erem

\bthm\label{T:Weyl-sp-x1}
$$\Sigma_{e,v}^+\cup \Sigma_{e,v}^-\cup \Sigma_{e,w}^+\cup \Sigma_{e,w}^- \subset \sigma_{\mathrm{Weyl}}(\cL)$$
\ethm
\bpf
We prove $\Sigma_{e,v}^+ \subset \sigma_{\mathrm{Weyl}}(\cL)$ in detail and comment on the simpler case $\Sigma_{e,w}^+ \subset \sigma_{\mathrm{Weyl}}(\cL)$ at the end. The remaining two statements are proved analogously.

Let $\omega \in \Sigma_{e,v}^+$ and $k_0$ and $v$ be like in the definition of $\Sigma_{e,v}^+$. To construct a Weyl sequence, we first define
\beq\label{E:vn}
v_n(x_1,k_2,k_3):=\frac{n^{\frac{N-1}{2}}}{\sqrt{l_n}}\zeta(n(k-k_{0}))\varphi\left(\frac{x_1-R_n}{l_n}\right)v(x_1),
\eeq
where $\zeta \in C^\infty_c(\R^{N-1},\R)$ is arbitrary and $\varphi \in C^\infty_c(\R,\R)$ satisfies
\beq\label{E:supp-phi}
\supp~\varphi \subset [-1,\infty),  \quad \varphi >0 \quad \text{on}\ (-1,1].
\eeq
Next, we calculate
\beq\label{E:vn-norm}
\begin{aligned}
	\|v_n\|_{L^2(\R^N)}^2 &= \frac{n^{N-1}}{l_n}\int_{\R^N} \zeta(n(k-k_{0}))^2 \varphi\left(\frac{x_1-R_n}{l_n}\right)^2|v(x_1)|^2\dd (x_1,k) \\
	&= \|\zeta\|_{L^2(\R^{N-1})}^2 \int_{\R} \varphi(y_1)^2|v(R_n+l_ny_1)|^2 \dd y_1.
\end{aligned}
\eeq
Since $v$ is bounded, \eqref{E:vn-norm} shows that $\|v_n\|_{L^2(\R^N)}$ is bounded from above independently of $n$. Furthermore, denoting
$$M_n:=\{y_1\in [-1,1]: |v(R_n+l_ny_1)|\geq \eps\}$$
and using \eqref{E:seq-supp}, we get
$$\text{meas}~M_n = \frac{1}{l_n}\text{meas}~\{x_1\in [R_n-l_n,R_n+l_n]: |v(x_1)|\geq \eps\} \geq \widetilde{c},$$
where $\widetilde{c}:=\min\{2c,3\}$ is introduced for a technical reason explained next. Because $\text{meas}~(M_n\cap [-1,-1+\tfrac{\widetilde{c}}{2}])\leq \tfrac{\widetilde{c}}{2}$, we therefore get
$$
\text{meas}~\left(M_n\cap \left[-1+\frac{\widetilde{c}}{2},1\right]\right) \geq \frac{\widetilde{c}}{2}.
$$
Note that the interval $\left[-1+\frac{\widetilde{c}}{2},1\right]$ is non-empty because $\widetilde{c}\leq 3.$
Using this estimate, we obtain
$$\int_\R\varphi(y_1)^2|v(R_n+l_ny_1)|^2 \dd y_1 \geq \eps^2 \int_{M_n} \varphi(y_1)^2\dd y_1 \geq \eps^2 \int_{M_n\cap [-1+\frac{\widetilde{c}}{2},1]} \varphi(y_1)^2\dd y_1 \geq \eps^2\frac{\widetilde{c}}{2} \min_{[-1+\frac{\widetilde{c}}{2},1]}\varphi^2 >0.$$
Hence, \eqref{E:vn-norm} implies that $\|v_n\|_{L^2(\R^N)}$ is bounded also from below independently of $n$.

For the rest of the proof, once again, we use the notation
$$W:=W(x_1):=\omega^2 \perm(x_1,\omega).$$
Motivated by \eqref{5} (with $r=0$), we define the corresponding component $u_1$ by
$$u_{1,n}(x_1,k):=\frac{-\ri |k_0|v_n'(x_1,k)}{|k_0|^2-W(x_1)}.$$
Since the resulting vectors $(u_{1,n},v_n,0)$ in general do not satisfy the divergence condition, we modify the second component to obtain our proposed Weyl sequence
\beq\label{E:Un-rn}
U_n:=\begin{pmatrix}
	u_{1,n}\\ v_n+r_n\\ 0
\end{pmatrix}, \quad \text{where} \quad  r_n(x_1,k):=\frac{1}{W(x_1)}\frac{|k_0|}{|k|} \left(\frac{W(x_1) v_n'(x_1,k)}{|k_0|^2-W(x_1)}\right)'-v_n(x_1,k).
\eeq

To show that $U_n$ is a Weyl sequence, we start by proving $U_n \in \widetilde{\cD}_\omega$ for all $n\in \N$ and $\widetilde{L}U_n\to 0$ in $L^2(\R^N)^3$, where $\widetilde{L}$ was defined in \eqref{E:Lktil}. First note that besides $v_n\in L^2(\R^N)$ (shown above), one also has $v_n'\in L^2(\R^N)$ and by \eqref{3} also $v_n''\in L^2(\R^N)$. It follows that $u_{1,n}\in L^2(\R^N)$. Next, we show that $r_n, r_n',r_n''\to 0$ in $L^2$. Abbreviating
$$\rho_{n,k}:=\frac{n^\frac{N-1}{2}}{\sqrt{l_n}}\zeta(n(k-k_{0})),$$
we obtain
$$
\begin{aligned}
	r_n&=\frac{\rho_{n,k}}{W} \frac{|k_0|}{|k|}\left[\frac{W}{|k_0|^2-W}\left(\varphi\left(\frac{\cdot-R_n}{l_n}\right)v'+\frac{1}{l_n}\varphi'\left(\frac{\cdot-R_n}{l_n}\right)v\right)\right]'-v_n\\
	&= \left(\frac{|k_0|}{|k|}-1\right)v_n +\frac{\rho_{n,k}}{W}\frac{|k_0|}{|k|}\left\{\frac{2}{l_n}\varphi'\left(\frac{\cdot-R_n}{l_n}\right)\frac{W}{|k_0|^2-W}v'+\right.\\
	&\qquad \left.\frac{1}{l_n}\left[\left(\frac{W}{|k_0|^2-W}\right)'\varphi'\left(\frac{\cdot-R_n}{l_n}\right)+ \frac{1}{l_n}\frac{W}{|k_0|^2-W}\varphi''\left(\frac{\cdot-R_n}{l_n}\right)\right]v \right\}\\
	&=:\left(\frac{|k_0|}{|k|}-1\right)v_n +\frac{\rho_{n,k}}{W}\frac{|k_0|}{|k|} A_n(v,\varphi),
\end{aligned}
$$
where we have used the differential equation for $v$, i.e., $\left(\frac{W}{|k_0|^2-W}v'\right)'=W v$. Next, differentiating $r_n$ yields
\beq\label{E:rn-prime}
\begin{aligned}
	r_n'&=\left(\frac{|k_0|}{|k|}-1\right)v_n' - \frac{\rho_{n,k}W'}{W^2}\frac{|k_0|}{|k|}A_n(v,\varphi) \\
	&\quad + \frac{\rho_{n,k}}{W}\frac{|k_0|}{|k|l_n}\left\{2 \varphi'\left(\frac{\cdot-R_n}{l_n}\right)W v +\left[ \left(\frac{W}{|k_0|^2-W}\right)'\varphi'\left(\frac{\cdot-R_n}{l_n}\right)+\frac{3}{l_n}\varphi''\left(\frac{\cdot-R_n}{l_n}\right)\frac{W}{|k_0|^2-W}\right]v'\right.\\
	&\quad \left.  +\left[ \left(\frac{W}{|k_0|^2-W}\right)''\varphi'\left(\frac{\cdot-R_n}{l_n}\right)+ \frac{2}{l_n}\left(\frac{W}{|k_0|^2-W}\right)'\varphi''\left(\frac{\cdot-R_n}{l_n}\right)+\frac{1}{l_n^2}
	\frac{W}{|k_0|^2-W}\varphi'''\left(\frac{\cdot-R_n}{l_n}\right)\right]v\right\}.
\end{aligned}
\eeq
Differentiating once more, we get $r_n''$ in terms of $v_n'', v, v',$ and $v''$. The formula for $r_n''$ includes $W'''$. This is where the assumption $\perm_\pm\in W^{3,\infty}(\R_\pm)$  from \eqref{E:ass-perm} is fully used. Because $v\in C^1_b$ and with the use of the differential equation \eqref{3}, we get that $v,v',v''$ are bounded on $[0,\infty)$. Moreover,
\beq\label{E:rho-est1}
\begin{aligned}
	\int_{\R^{N-1}} \rho_{n,k}^2\left(\frac{|k_0|}{|k|}-1\right)^2\dd k =  \frac{1}{l_n}\int_{\R^{N-1}} \zeta(\kappa)^2\left(\frac{|k_0|}{|k_0+\tfrac{1}{n}\kappa|}-1\right)^2\dd \kappa \to 0 \ (n\to \infty)
\end{aligned}
\eeq
due to the compact support of $\zeta$. Similarly, one shows that
\beq\label{E:rho-est2}
\int_{\R^{N-1}} \rho_{n,k}^2 \frac{|k_0|^2}{|k|^2} \dd k\leq \frac{c}{l_n}
\eeq
for all $n$ large enough with $c$ independent of $n$. Thanks to \eqref{E:rho-est1} and \eqref{E:rho-est2} and using the formulas for $r_n, r_n'$ and $r_n''$, we have $r_n,r_n',r_n''\in L^2(\R^N)$ and
\beq\label{E:rn-conv}
r_n,r_n',r_n'' \to 0 \quad \text{in} \ L^2(\R^N).
\eeq
In particular, we have now shown that $U_n \in L^2(\R^N)^3$.
From \eqref{E:rn-prime} we can derive in a similar way that $|k|r_n'\to 0$ in $L^2$, and therefore
\beq\label{eq:Lrn}
\widetilde{L}\bspm
0\\r_n\\0
\espm\to 0 \quad \text{in} \ L^2(\R^N).
\eeq
Straightforward estimates using the compact support of $\varphi$ and $\zeta$ and the boundednness of $v$, $v'$, and $v''$ produce the remaining
$L^2$-properties in $\widetilde{\mathcal{D}}$, i.e.,
$$(v_n+r_n)'-\ri |k|u_{1,n}, \ (v_n+r_n)''-\ri |k|u_{1,n}', \ \ri |k| (v_n+r_n)'+|k|^2u_{1,n} \in L^2(\R^N).$$
The interface conditions in $\widetilde{\mathcal{D}}$, i.e.,
$$\llbracket \perm u_{1,n}\rrbracket = \llbracket v_n+r_n\rrbracket=\llbracket \ri |k|u_{1,n}-(v_n+r_n)'\rrbracket=0$$
are trivially satisfied because due to \eqref{E:supp-phi} we have $v_n(x_1)=0$ for $x_1\leq R_n/2$, and all terms in $u_{1,n}$ and $r_n$ are proportional either to $v_n$ or $v_n'$.

The last property needed to conclude $U_n\in \widetilde{\cD}_\omega$ is the divergence condition. This holds automatically since
$$(Wu_{1,n})'+\ri |k| W(v_n+r_n)=  (Wu_{1,n})' + \ri|k_0| \left(\frac{Wv_n'}{|k_0|^2-W}\right)'=0$$
using the definition of $u_{1,n}$ and $r_n$.

Next, we need to show that $\widetilde{L}\bspm u_{1,n}\\ v_n\\ 0\espm \to 0$ in $L^2$. We calculate
\begin{align}
	\widetilde{L} \bspm u_{1,n}\\ v_n\\ 0\espm & = \begin{pmatrix}
		(|k|^2-W)u_{1,n} +\ri |k| v_n'\\ \ri |k| u_{1,n}' - v_n'' - Wv_n \\0
	\end{pmatrix} = \begin{pmatrix}
		(|k_0|^2-W)u_{1,n} +\ri |k_0| v_n'\\ \ri |k_0| u_{1,n}' - v_n'' - Wv_n \\0
	\end{pmatrix}+\begin{pmatrix}
		(|k|^2-|k_0|^2)u_{1,n} + \ri (|k|-|k_0|)v_n'\\ \ri (|k|-|k_0|)u_{1,n}'\\0
	\end{pmatrix} \notag\\
	&=\begin{pmatrix}
		0\\ \left(\frac{W}{|k_0|^2-W}v_n'\right)'-Wv_n\\0
	\end{pmatrix}+\begin{pmatrix}
		(|k|^2-|k_0|^2)u_{1,n} + \ri (|k|-|k_0|)v_n'\\ \ri (|k|-|k_0|)u_{1,n}'\\0
	\end{pmatrix},\label{E:LUn-part}
\end{align}
where we have used the definition of $u_{1,n}$ and the identity $\left(\frac{|k_0|^2 v_n'}{|k_0|^2-W}\right)'-v_n''= \left(\frac{W}{|k_0|^2-W}v_n'\right)'$.

We denote the second component of the first term in \eqref{E:LUn-part} by $B_n$, i.e., $B_n:=\left(\frac{W}{|k_0|^2-W}v_n'\right)'-Wv_n$ and obtain
$$
\begin{aligned}
	B_n(x_1,k) &= \frac{n^\frac{N-1}{2}}{\sqrt{l_n}}\zeta(n(k-k_{0}))\left[\frac{W}{|k_0|^2-W}\left(\varphi\left(\frac{x_1-R_n}{l_n}\right)v\right)'' \right.\\
	&\left. \qquad +\left(\frac{W}{|k_0|^2-W}\right)'\left(\varphi\left(\frac{x_1-R_n}{l_n}\right)v\right)' - W \varphi\left(\frac{x_1-R_n}{l_n}\right) v\right]\\
	&= \frac{n^\frac{N-1}{2}}{\sqrt{l_n}}\zeta(n(k-k_{0}))\left\{\frac{1}{l_n^2}\frac{W}{|k_0|^2-W}\varphi''\left(\frac{x_1-R_n}{l_n}\right)v\right.\\
	&\left. \qquad +\frac{1}{l_n}\left[2\frac{W}{|k_0|^2-W} \varphi'\left(\frac{x_1-R_n}{l_n}\right)v'+\frac{|k_0|^2W'}{(|k_0|^2-W)^2}\varphi'\left(\frac{x_1-R_n}{l_n}\right)v\right] \right\}.
\end{aligned}
$$
As $v\in C^1_b([0,\infty))$ and $\dist(W(\R),\{|k_0|^2\})>0$, we get
$$|B_n(x_1,k)|\leq C\frac{n^\frac{N-1}{2}}{\sqrt{l_n}}|\zeta(n(k-k_{0}))|\left(\frac{1}{l_n^2}\left|\varphi''\left(\frac{x_1-R_n}{l_n}\right)\right|+\frac{1}{l_n}\left|\varphi'\left(\frac{x_1-R_n}{l_n}\right)\right|\right)$$
leading to
$$
\|B_n\|_{L^2(\R^N)}^2\leq 2C^2\|\zeta\|_{L^2(\R^{N-1})}^2 \left(\frac{1}{l_n^4}\|\varphi''\|_{L^2(\R)}^2+\frac{1}{l_n^2}\|\varphi'\|_{L^2(\R)}^2\right)
\leq \frac{c}{l_n^2}\to 0 \quad (n\to \infty).$$
In order to estimate the second vector in \eqref{E:LUn-part}, i.e.,
$$C_n(x_1,k):=\bspm
(|k|^2-|k_0|^2)u_{1,n} + \ri (|k|-|k_0|)v_n'\\ \ri (|k|-|k_0|)u_{1,n}'\\0
\espm, $$
first note that $||k|-|k_0||\leq |k-k_0|$ and $||k|^2-|k_0|^2|\leq |k-k_0|(|k|+|k_0|)$.  Hence,
$$
\begin{aligned}
	|C_n(x_1,k)|^2&=\left|\begin{pmatrix}-\ri (|k|^2-|k_0|^2)\frac{|k_0|v_n'}{|k_0|^2-W} + \ri (|k|-|k_0|)v_n'\\  (|k|-|k_0|) \left(\frac{|k_0|v_n'}{|k_0|^2-W}\right)'\\ 0\end{pmatrix}\right|^2\leq C|k-k_0|^2\left[(|k|+|k_0|)^2|v_n'|^2+|v_n'|^2+|v_n''|^2\right]\\
	&\leq C|k-k_0|^2\frac{n^{N-1}}{l_n}\zeta(n(k-k_{0}))^2(|k|^2+1)
	\left[\varphi\left(\frac{x_1-R_n}{l_n}\right)^2 + \varphi'\left(\frac{x_1-R_n}{l_n}\right)^2 + \varphi''\left(\frac{x_1-R_n}{l_n}\right)^2\right],
\end{aligned}
$$
where we have used the boundedness of $v, v',$ and $v''$ on $\R$. The fact that the second derivative $v''$ is bounded follows from the boundedness of $v$ and $v'$ and from the differential equation \eqref{3} (with $r=0$).

For the $L^2-$norm we obtain
$$
\begin{aligned}
	\|C_n\|_{L^2(\R^N)}^2 &\leq C \int_{\R^{N-1}} \zeta(\kappa)^2\frac{|\kappa|^2}{n^2}\left[
	\left|k_{0}+\frac{\kappa}{n}\right|^2+1\right]\dd \kappa\left(\|\varphi\|_{L^2(\R)}^{2} +\|\varphi'\|_{L^2(\R)}^2+\|\varphi''\|_{L^2(\R)}^2\right)\\
	&\leq Cn^{-2} \to 0 \quad (n\to \infty),
\end{aligned}
$$
leading to the conclusion $\widetilde{L} \bspm u_{1,n}\\ v_n\\ 0\espm \to 0$ in $L^2(\R^N)^3$ and together with \eqref{eq:Lrn} we get $\widetilde{L} U_n \to 0$ in $L^2$.

It remains to be proved that $U_n \rightharpoonup 0$ in $L^2(\R^N)^3$. As shown in \eqref{E:vn-norm}, $\|v_n\|_{L^2}$ is bounded above independently of $n$. Differentiating $v_n$ and using that $v'$ is bounded by assumption, we find similarly to \eqref{E:vn-norm} that also $\|v_n'\|_{L^2}$ is bounded above independently of $n$. By the definition of $u_{1,n}$ and the convergence in \eqref{E:rn-conv}, we find that
\beq\label{E:Un-bded}
\|U_n\|_{L^2(\R^N)^3}\leq C \quad \forall n \in \N
\eeq
with $C>0$ independent of $n$. For any $\eta\in L^2(\R^N)^3$ we find
$$
\begin{aligned}
\left|\int_{\R^N}U_n\cdot\eta \dd x\right|&=\left|\int_{R_n-l_n}^\infty \int_{\R^{N-1}} U_n\cdot \eta \dd k \dd x_1\right|\\
&\leq \|U_n\|_{L^2((R_n-l_n,\infty)\times \R^{N-1})^3}\|\eta\|_{L^2((R_n-l_n,\infty)\times \R^{N-1})^3} \to 0  \quad (n\to \infty),
\end{aligned}
$$
where the first equality follows from  $v_n(x_1,k)=0$ for $x_1\leq R_n-l_n$, which is ensured by \eqref{E:supp-phi}. The convergence step is concluded using \eqref{E:Un-bded} and the fact $R_n - l_n \to \infty$.

This finishes the proof of $\Sigma_{e,v}^+ \subset \sigma_{\mathrm{Weyl}}(L)$.

For $\omega\in \Sigma_{e,w}^+$ we choose $k_0$ and $w$ like in the definition of $\Sigma_{e,w}^+$. After defining $w_n$ by \eqref{E:vn} with $v$ replaced by $w$ and the Weyl sequence $U_n:=(0,0,w_n)^\top$, the proof is a simplified version of the one above. Note, in particular, that the divergence condition is trivially satisfied by $U_n$.
\epf

\subsubsection{Periodic and Asymptotically Periodic Media}\label{S:Wey-x1-per}

Let us first consider the case of $\perm_+(\cdot,\omega)$ or $\perm_-(\cdot,\omega)$ being periodic. As we show, a sufficient condition for \eqref{E:seq-supp} with respect to equation \eqref{3} or \eqref{4} is that the corresponding system is in the case of stability or conditional stability. See Sec. \ref{S:per-estimates} for the stability concept.

If, in the case of periodic $\perm_+(\cdot,\omega)$, we have for the corresponding discriminants $D^{(v)}_+\in [-2,2]$ or $D^{(w)}_+\in [-2,2]$, then there are bounded solutions of \eqref{3} or \eqref{4}, resp., on $\R_+$. If, on the other hand $\perm_-(\cdot,\omega)$ is periodic and $D^{(v)}_-\in [-2,2]$ or $D^{(w)}_-\in [-2,2]$, then there are bounded solutions on $\R_-$. 

\blem\label{lem:Weyl-x1-per}
Let $\omega \in D(\perm)$ and let $\perm_+(\cdot,\omega)$ be $a$-periodic (with $a>0$).  Assume that for some $k_0\in \R^{N-1}$ one of the following two options holds.
\begin{itemize}
	\item[a)] $\omega \notin \Omega^+_{|k_0|^2}$ and equation \eqref{3}, with $\perm$ replaced by $\perm_+$ and $k$ by $k_0$, is in the
	case of stability or conditional stability, i.e., $D_+^{(v)}(k_0)\in [-2,2]$, 
	\item[b)]  \eqref{4}, with $\perm$ replaced by $\perm_+$ and $k$ by $k_0$, is in the
	case of stability or conditional stability, i.e., $D_+^{(w)}(k_0)\in [-2,2]$.
\end{itemize}
Then $\omega \in \sweyl(\cL)$.

An analogous statement holds if $\perm_-(\cdot,\omega)$ is periodic using the conditions $D_-^{(v)}(k_0)\in [-2,2]$ and $D_-^{(w)}(k_0)\in [-2,2]$, respectively.
\elem
\brem
Note that in Lemma \ref{lem:Weyl-x1-per} the periodicity of $\perm(\cdot,\omega)$ needs to be satisfied on $\R_+$ or on $\R_-$, not necessarily on both half lines. In fact, periodicity on $(x_0,\infty)$ or on $(-\infty,-x_0)$ with $|x_0|$ large enough is sufficient as one easily sees in the construction of the Weyl sequence.
\erem

\bpf
We assume $a$-periodicity of $\perm_+$ and first show that in the case a) we get $\omega \in \Sigma_{e,v}^+$. The statement then follows by Theorem \ref{T:Weyl-sp-x1}. First note that due to $D_+^{(v)}(k_0)\in [-2,2]$ a solution $v$ of the homogeneous version of  \eqref{3} with the form
$v(x_1)=e^{\ri mx_1}p(x_1)$ exists, where $m\in \R$ and $p$ is $a-$periodic (cf. equation (\ref{eq:6})). The solution $v$ satisfies $v\in C_b^1(\overline{\R_+})$.
Condition  \eqref{E:seq-supp} is satisfied by choosing $R_n:=2na, l_n:=na$ and $\eps:=\tfrac{1}{2}\|p\|_\infty$. Indeed, we obtain
$$
\begin{aligned}
	\frac{1}{2l_n}\text{meas}\{x\in[R_n-l_n,R_n+l_n]:|v(x)|\geq \eps\} &= \frac{1}{2na}\text{meas}\{x\in[0,2na]:|p(x)|\geq \tfrac{1}{2}\|p\|_\infty\} \\
	&= \frac{1}{a}\text{meas}\{x\in[0,a]:|p(x)|\geq \tfrac{1}{2}\|p\|_\infty\},
\end{aligned}
$$
which is positive since $p$ is continuous.

The same argument applies in the case b) and leads to $\omega \in \Sigma_{e,w}^+$. The condition $\omega\notin \Omega_{|k_0|^2}^+$ is not needed here because it does not appear in the definition of $ \Sigma_{e,w}^+$.

Analogous arguments apply when $\perm_-(\cdot,\omega)$ is periodic, leading to $\omega \in \Sigma_{e,v}^-$ or $\omega \in \Sigma_{e,w}^-$, respectively.
\epf

For the rest of Section \ref{S:weyl-x}  we consider  $\perm$  given by an $L^1$-perturbation of a periodic function. This can be understood as the case of an asymptotically periodic permittivity.

\begin{lem}\label{L:Weyl-x1-as-per}
	Let $\omega \in D(\perm)$.  Suppose that $\perm_+(\cdot,\omega)=\perm_{p,+}(\cdot,\omega)+\perm_{as,+}(\cdot,\omega)$, where $\perm_{p,+}(\cdot,\omega)$ is $a$-periodic and $\perm_{as,+}(\cdot,\omega)\in L^1(\R_+)$.
	\begin{enumerate}
		\item[a)] Let $k_0\in\R^{N-1}$ be such that equation \eqref{4}, with $\perm$ replaced by $\perm_{p,+}$ and $k$ by $k_0$, is in the
		case of stability, see Section \ref{S:per-estimates}. Then $\omega\in\sigma_{\mathrm{Weyl}}(\cL)$.
		\item[b)] Let $k_0\in\R^{N-1}$ be such that equation \eqref{3}, with $\perm$ replaced by $\perm_{p,+}$ and $k$ by $k_0$, is in the case of stability. Moreover, suppose that $\left(\perm_{as,+}\right)'\in L^1(\R_+)$ and $\dist(\omega^2 \perm_{p,+}(\R_+,\omega),\{0,|k_0|^2\})>0$.
		Then $\omega\in\sigma_{{\mathrm Weyl}}(\cL)$.
	\end{enumerate}
	Analogous statements hold if the corresponding assumptions are satisfied by $\perm_-$.
\end{lem}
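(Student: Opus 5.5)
The plan is to reduce both parts to Theorem \ref{T:Weyl-sp-x1}, i.e.\ to show $\omega\in\Sigma^+_{e,w}$ in case a) and $\omega\in\Sigma^+_{e,v}$ in case b). For this we must produce a solution of the unperturbed-by-nothing (i.e.\ actual, with $\perm_+$) homogeneous equation on $\R_+$ that lies in $C^1_b(\R_+)$ and satisfies \eqref{E:seq-supp}. The natural tool is Levinson-type asymptotic integration: solutions of the full equation are asymptotic to solutions of the periodic equation, since the difference of coefficients is integrable.

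For a), equation \eqref{4} with $k=k_0$ reads $-w''+(|k_0|^2-\omega^2\perm_{p,+})w=\omega^2\perm_{as,+}w$. Stability gives a fundamental system $\psi_1,\psi_2$ of the periodic equation with $\psi_j,\psi_j'$ bounded on $\R_+$ and constant Wronskian $\tau\ne0$. I will solve the Volterra equation
\[
w(x)=\psi_1(x)-\frac{1}{\tau}\int_x^\infty\bigl(\psi_1(x)\psi_2(t)-\psi_2(x)\psi_1(t)\bigr)\omega^2\perm_{as,+}(t)w(t)\,dt
\]
on $[X,\infty)$ (sign conventions to be fixed) in $C_b$; the kernel is bounded by $c\,|\perm_{as,+}(t)|$, so for $X$ large the operator is a contraction, and the same formula gives $w'$ bounded with $w-\psi_1,\ w'-\psi_1'\to0$ as $x\to\infty$. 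Extend $w$ to all of $\R_+$ by solving the ODE backwards (linear, coefficients bounded), so $w\in C^1_b(\R_+)$. Now $\psi_1=e^{\ri m x}p(x)$ with $p$ periodic, continuous and nonzero, so $|\psi_1|$ is periodic; with $R_n=2na$, $l_n=na$, $\eps=\frac14\|p\|_\infty$, and since $|w-\psi_1|<\frac14\|p\|_\infty$ for large $x$, the set where $|w|\ge\eps$ contains the set where $|p|\ge\frac12\|p\|_\infty$ on $[R_n-l_n,R_n+l_n]$, giving \eqref{E:seq-supp} exactly as in Lemma \ref{lem:Weyl-x1-per}. (The stability, not just conditional stability, is needed so that both $\psi_1,\psi_2$ are bounded, which makes the kernel bounded.)

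For b), I first check $\omega\notin\Omega^+_{|k_0|^2}$: since $\perm_{as,+}$ and $\perm_{as,+}'$ are in $L^1$, $\perm_{as,+}(x)\to0$, so $\dist(\omega^2\perm_+(\R_+),\{0,|k_0|^2\})$ is positive on $[X,\infty)$; on the compact part $[0,X]$ I need to argue separately — the cleanest route is to run the whole construction only on $[X,\infty)$ and, as in the remark after Lemma \ref{lem:Weyl-x1-per}, note that the Weyl sequence of Theorem \ref{T:Weyl-sp-x1} only sees the coefficients far out, so the distance condition is only used for large $x_1$. (If necessary I restate Theorem \ref{T:Weyl-sp-x1} with the distance condition on $[X,\infty)$; its proof goes through verbatim.) Then write \eqref{3} as the first-order system for $Y=(v,Pv')^\top$, $Y'=(A_p+A_{as})Y$ with $A_p=\bigl(\begin{smallmatrix}0&1/P_p\\ Q_p&0\end{smallmatrix}\bigr)$, $P=\frac{\perm_+}{|k_0|^2-\omega^2\perm_+}$, $Q=\perm_+$. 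The perturbation $A_{as}=A-A_p$ has entries $O(|\perm_{as,+}|)$ because the denominators stay away from zero, hence $A_{as}\in L^1$. With $\Phi_p$ the bounded (by stability) fundamental matrix with bounded inverse (determinant $1$), the Volterra equation $Y=\Phi_p e_1-\int_x^\infty\Phi_p(x)\Phi_p(t)^{-1}A_{as}(t)Y(t)\,dt$ again gives a bounded $Y$ with $Y-\Phi_pe_1\to0$. Then $v$ and $v'=Y_2/P$ are bounded, and \eqref{E:seq-supp} follows as in a). Here the role of $\perm_{as,+}'\in L^1$ is to guarantee $\perm_{as,+}\to0$ and hence the uniform distance bounds; the $W^{3,\infty}$ regularity needed in Theorem \ref{T:Weyl-sp-x1} is already assumed in \eqref{E:ass-perm}.

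The main obstacle I expect is the bookkeeping around the exceptional set in b): making sure the hypotheses of Theorem \ref{T:Weyl-sp-x1} (specifically $\omega\notin\Omega^+_{|k_0|^2}$, needed to bound $u_{1,n}$ and $B_n$) hold, or justifying that only the tail matters. The asymptotic integration itself is standard.
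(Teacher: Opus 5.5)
Your proof is correct, and its architecture is the paper's. You produce a solution on the half-line that stays uniformly close to a bounded Floquet solution $e^{\ri mx}p_1(x)$ of the periodic equation, verify \eqref{E:seq-supp} with $R_n=2na$, $l_n=na$, and apply Theorem \ref{T:Weyl-sp-x1}. The technical route differs in two places.

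First, the paper does not solve a Volterra equation from infinity. It takes the solution of the initial value problem matching $\psi_1$ at a large $x_0$, represents $w-\psi_1$ by variation of constants over $[x_0,x]$, and closes with the absorption estimate $\|w\|_{L^\infty(x_0,x)}\le C\|\perm_{as}\|_{L^1(x_0,\infty)}\|w\|_{L^\infty(x_0,x)}+\|p_1\|_\infty$. This needs no fixed-point argument but only gives $|w-\psi_1|\le\frac14\|p_1\|_\infty$ on $[x_0,\infty)$. Your Levinson-type construction gives $w-\psi_1\to0$; either is enough.

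Second, in b) the paper writes \eqref{3} in non-divergence form, $-v''-\frac{|k_0|^2\perm'}{\perm(|k_0|^2-\omega^2\perm)}v'+(|k_0|^2-\omega^2\perm)v=0$. The perturbation then contains $\perm'-\perm_p'$, so $\perm_{as}'\in L^1$ is needed to make it integrable (see \eqref{eq:ap10}). Your first-order system for $(v,Pv')$ has perturbation entries $1/P-1/P_p=-|k_0|^2\perm_{as}/(\perm\perm_p)$ and $Q-Q_p=\perm_{as}$. Hence $\perm_{as}'$ enters your argument only through $\perm_{as}\to0$, and it works under the weaker assumption $\perm_{as}\in L^1$, $\perm_{as}(x)\to0$.

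Two small points. In b), start the Volterra equation from a Floquet solution $\Phi_p\Psi^{(1)}$ rather than from $\Phi_pe_1$. Otherwise $|v_p|$ is not periodic and ``as in a)'' does not apply verbatim.

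Your tail version of Theorem \ref{T:Weyl-sp-x1} also settles a point the paper passes over. In b), the paper's hypotheses only control $\perm_{p,+}$, and it only obtains $v\in C^1_b[x_0,\infty)$. Yet it concludes $\omega\in\Sigma^+_{e,v}$, which formally requires $\omega\notin\Omega^+_{|k_0|^2}$ and $v\in C^1_b(\R_+)$ for $\perm_+$ on all of $\R_+$.
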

\brem
We note here that the medium does not need to be asymptotically periodic on both half lines $\R_\pm$. Lemma \ref{L:Weyl-x1-as-per} assumes this structure on either side of the interface. Moreover, for clarity, we remark that the conditions on on $\perm_{p,+}$ in b) are equivalent to $\omega \notin \Omega^+_{|k_0|^2} \cup \Omega^+_0$ with the $\Omega$-set defined based on the permittivity $\perm_{p,+}$.
\erem
\bpf
We restrict ourselves to the ``+'' case. For ease of notation we drop the subscript + and denote $x_1$ simply by $x$ in this proof.

a) Due to the fact that equation \eqref{4} is in the case of stability, there exists a fundamental system of the form $\psi_1(x)=e^{\ri mx}p_1(x)$, $\psi_2(x)=e^{-\ri mx}p_2(x)$ with $m\in\R$ and periodic $p_1,p_2$ (cf. equation (\ref{eq:6})).

Let $w$ denote the solution of the initial value problem
\beq\label{eq:ap1}
-w''+(|k_0|^2-\omega^2\perm(\cdot,\omega))w = 0, \ w(x_0)=\psi_1(x_0), \ w'(x_0)=\psi'_1(x_0),
\eeq
where $x_0>0$ will be specified later. Then $u=w-\psi_1$ satisfies
\beqq
-u''+(|k_0|^2-\omega^2\perm_p(\cdot,\omega))u = \omega^2 \perm_{as}(\cdot,\omega)w, \ u(x_0)=u'(x_0)=0,
\eeqq
whence the variation of constants formula gives
\beq\label{eq:ap2}
u(x)=\int_{x_0}^x \frac{\psi_1(x)\psi_2(t)-\psi_2(x)\psi_1(t)}{\psi_1(t)\psi'_2(t)-\psi_2(t)\psi'_1(t)}\omega^2\perm_{as}(t,\omega)w(t)\ dt
\eeq
and
\beq\label{eq:ap3}
u'(x)=\int_{x_0}^x \frac{\psi'_1(x)\psi_2(t)-\psi'_2(x)\psi_1(t)}{\psi_1(t)\psi'_2(t)-\psi_2(t)\psi'_1(t)}\omega^2\perm_{as}(t,\omega)w(t)\ dt.
\eeq
Here, the Wronski determinant $\psi_1\psi'_2-\psi_2\psi'_1=p_1p_2'-p_2p_1'-2\ri mp_1p_2$ is periodic and hence, since it is non-zero, bounded away from $0$ on $[0,\infty)$. Furthermore, the numerators in \eqref{eq:ap2} and \eqref{eq:ap3},
\beqq
e^{\ri m(x-t)} p_1(x)p_2(t)- e^{\ri m(t-x)} p_1(t)p_2(x)
\eeqq
and
\beqq
e^{\ri m(x-t)} (p_1'(x)+\ri mp_1(x))p_2(t)- e^{\ri m(t-x)} p_1(t)(p'_2(x)-\ri mp_2(x)),
\eeqq
respectively, are bounded on $[0,\infty)\times[0,\infty)$. Consequently, for some ($\omega$-dependent) $C>0$, by \eqref{eq:ap2} and \eqref{eq:ap3},
\beq\label{eq:ap4}
|u(x)|,|u'(x)|\leq C \int_{x_0}^x |\perm_{as}(t,\omega)|\ dt \ \|w\|_{L^\infty(x_0,x)} \hbox{ for } x\in[x_0,\infty).
\eeq
In particular, since $|w(x)|\leq|u(x)|+|\psi_1(x)|=|u(x)|+|p_1(x)|$, \eqref{eq:ap4} implies
\beqq
\|w\|_{L^\infty(x_0,x)} \leq C\|\perm_{as}(\cdot,\omega)\|_{L^1(x_0,x)}\|w\|_{L^\infty(x_0,x)} +  \|p_1\|_{L^\infty(0,a)}
\eeqq
and hence $w$ is bounded if
\beq\label{eq:ap5}
\|\perm_{as}(\cdot,\omega)\|_{L^1(x_0,\infty)}<\frac{1}{C}.
\eeq
As $\perm_{as}(\cdot,\omega)\in L^1(\R_+)$, \eqref{eq:ap5} will be satisfied if $x_0$ is chosen sufficiently large.
Now, \eqref{eq:ap4} together with $|w'(x)|\leq|u'(x)|+|p'_1(x)+\ri mp_1(x)|$ implies boundedness also of $w'$. Thus, $w$ is a solution of \eqref{4} with $k=k_0$ in $C^1_b(\overline{\R}_+)$.

Moreover, again choosing $x_0$ sufficiently large, we can arrange that
\beq\label{eq:ap6}
\|\perm_{as}(\cdot,\omega)\|_{L^1(x_0,\infty)} \leq \frac{\|p_1\|_{L^\infty(0,a)}}{4C\|w\|_{L^\infty(\R_+)}}.
\eeq
Then \eqref{eq:ap4} together with  $|w(x)|\geq |\psi_1(x)|-|u(x)|= |p_1(x)|-|u(x)|$
gives
\beqq
|w(x)|\geq|p_1(x)|-\frac14 \|p_1\|_{L^\infty(0,a)}, \quad x\in[x_0,\infty).
\eeqq
With $R_n,l_n$ chosen as in the proof of Lemma \ref{lem:Weyl-x1-per}, we have
\beqq
\left\{ x\in[R_n-l_n,R_n+l_n] : |w(x)|\geq \frac14 \|p_1\|_{L^\infty(0,a)} \right\} \supseteq \left\{ x\in[R_n-l_n,R_n+l_n] : |p_1(x)|\geq \frac12 \|p_1\|_{L^\infty(0,a)} \right\}
\eeqq
provided that $R_n-l_n\geq x_0$, i.e., $n\geq\frac{x_0}{a}$. Hence, using the calculation in the proof of Lemma \ref{lem:Weyl-x1-per},
$$
\begin{aligned}
	\frac{1}{2l_n}\mathrm{meas\ }&\left\{ x\in[R_n-l_n,R_n+l_n] : |w(x)|\geq \frac14 \|p_1\|_{\infty} \right\}\\
	&\geq  \frac{1}{2l_n}\mathrm{meas\ }\left\{ x\in[R_n-l_n,R_n+l_n] : |p_1(x)|\geq \frac12 \|p_1\|_{\infty} \right\} \\
	&=\frac{1}{a}\mathrm{meas\ }\left\{ x\in[0,a] : |p_1(x)|\geq \frac12 \|p_1\|_{\infty} \right\} \ >\ 0
\end{aligned}
$$
shows that condition \eqref{E:seq-supp} is satisfied. Theorem \ref{T:Weyl-sp-x1} implies $\omega\in\Sigma^+_{e,w}\subseteq\sigma_{\mathrm{Weyl}}(\cL)$.

b) We use the fundamental system of \eqref{3} with $\perm$ replaced by $\perm_p$ of the form $\psi_1(x)=e^{\ri mx}p_1(x)$, $\psi_2(x)=e^{-\ri mx}p_2(x)$ with $m\in\R$ and periodic $p_1,p_2$. Let $v$ denote the solution of the initial value problem
\beq\label{eq:ap7}
-v''-\frac{|k_0|^2\perm'(\cdot,\omega)}{\perm(\cdot,\omega)\left(|k_0|^2-\omega^2 \perm(\cdot,\omega)\right)}v' +\left(|k_0|^2-\omega^2 \perm(\cdot,\omega)\right) v = 0, \ v(x_0)=\psi_1(x_0), \ v'(x_0)=\psi'_1(x_0),
\eeq
where $x_0\in[0,\infty)$ will again be chosen later. Note that the differential equation in \eqref{eq:ap7} is \eqref{3} with $k=k_0$ and $r\equiv 0$. In what follows, we suppress the $\omega$-dependence of $\perm, \perm_p$ and $\perm_{as}$. Now,  $u=v-\psi_1$ satisfies
\beq\label{eq:ap8}
\begin{aligned}
	-u''-\frac{|k_0|^2\perm'_p}{\perm_p\left(|k_0|^2-\omega^2 \perm_p\right)}u' +\left(|k_0|^2-\omega^2 \perm_p\right) u &= \omega^2\perm_{as}v+ \left[\frac{|k_0|^2\perm'}{\perm\left(|k_0|^2-\omega^2 \perm\right)}-\frac{|k_0|^2\perm_p'}{\perm_p\left(|k_0|^2-\omega^2 \perm_p\right)}\right] v', \\
	u(x_0)=u'(x_0)&=0.
\end{aligned}
\eeq
We set
\beq\label{eq:ap8b}
r:=\omega^2\perm_{as}v+ \left[\frac{|k_0|^2\perm'}{\perm\left(|k_0|^2-\omega^2 \perm\right)}-\frac{|k_0|^2\perm_p'}{\perm_p\left(|k_0|^2-\omega^2 \perm_p\right)}\right] v'.
\eeq
Then, as in a),  the variation of constants formula provides
\beq\label{eq:ap9}
|u(x)|, |u'(x)|\leq C\int_{x_0}^x |r(t)|\ dt \hbox{ for } x\in[x_0,\infty).
\eeq
Since, by assumption, $\perm_{as}\in W^{1,1}(\R_+)$, the embedding $W^{1,1}(\R_+)\hookrightarrow C_b[0,\infty)$ implies boundedness of $\perm_{as}$ and that $\perm_{as}(x)\to 0$ as $x\to\infty$. Hence, our assumptions that $|\perm_p|\geq\delta$ and $ ||k_0|^2-\omega^2 \perm_p|\geq \delta$ with some $\delta>0$ imply the same inequalities on $[x_0,\infty)$ with $\perm_p$ replaced by $\perm$ and $\delta$ by $\delta/2$ when $x_0$ is chosen sufficiently large. Thus, $r$ given in \eqref{eq:ap8b} satisfies
\beq\label{eq:ap10}
|r|\leq C_1(|\perm_{as}|+|\perm'_{as}|)(|v|+|v'|) \hbox{ on } [x_0,\infty),
\eeq
which together with \eqref{eq:ap9} and $u=v-\psi_1$ gives, for $x\in(x_0,\infty)$,
\beqq
\|v\|_{L^\infty(x_0,x)}\leq CC_1 \left(\int_{x_0}^x (|\perm_{as}|+|\perm'_{as}|)\ dt \right) \left( \|v\|_{L^\infty(x_0,x)}+\|v'\|_{L^\infty(x_0,x)}  \right) +\|p_1\|_{L^\infty(0,a)}
\eeqq
and
\beqq
\|v'\|_{L^\infty(x_0,x)}\leq CC_1 \left(\int_{x_0}^x (|\perm_{as}|+|\perm'_{as}|)\ dt \right) \left( \|v\|_{L^\infty(x_0,x)}+\|v'\|_{L^\infty(x_0,x)}  \right) +\|p'_1+\ri mp_1\|_{L^\infty(0,a)}.
\eeqq
Thus, choosing $x_0$ such that
\beqq
CC_1 \left( \|\perm_{as}\|_{L^1(x_0,\infty)}+\|\perm'_{as}\|_{L^1(x_0,\infty)} \right)  \leq \frac13,
\eeqq
we obtain   $v\in C^1_b[x_0,\infty)$. Moreover, \eqref{eq:ap9} and \eqref{eq:ap10} show that
\beqq
\|u\|_{L^\infty(x_0,x)}\leq CC_1\left( \|\perm_{as}\|_{L^1(x_0,\infty)}+\|\perm'_{as}\|_{L^1(x_0,\infty)} \right) \left( \|v\|_{L^\infty(\R_+)}+\|v'\|_{L^\infty(\R_+)}  \right)
\eeqq
and hence for $x_0$ large enough
\beqq
\|u\|_{L^\infty(x_0,x)}\leq \frac14 \|p_1\|_{L^\infty(0,a)}.
\eeqq
Therefore,
\beqq
|v(x)|\geq |p_1(x)|-\frac14 \|p_1\|_{L^\infty(0,a)}, \ x\in[x_0,\infty).
\eeqq
The rest of the proof is as in a) and we conclude $\omega \in \Sigma^+_{e,v} \subset \sigma_{\mathrm{Weyl}}(\cL)$.
\epf


\subsection{Radiation along the Interface}\label{S:Weyl-interf}

The main building block of a Weyl sequence localized at the interface $x_1=0$ is an eigenfunction of the operator 
$$\widetilde{L}_{k_0}(\omega):=
\begin{pmatrix}
	|k_0|^2 -\omega^2\perm(x_1,\omega) & \ri |k_0|\pa_{x_1} & 0\\
	\ri |k_0| \pa_{x_1} & -\pa_{x_1}^2 -\omega^2\perm(x_1,\omega) & 0\\
	0 & 0 & -\pa_{x_1}^2+|k_0|^2-\omega^2\perm(x_1,\omega)
\end{pmatrix}
$$
with a fixed $k_0\in \R^{N-1}$ and the domain
$$
\begin{aligned}
	D(\widetilde{L}_{k_0}(\omega)) =& \left\{ (u_1, v, w) \in L^2 (\R)^3  : v' - \ri |k_0| u_1, v'' - \ri |k_0| u_1',  \ri |k_0| v' + |k_0|^2 u_1, w', |k_0| w, w'' - |k_0|^2 w \in L^2(\R_\pm),\right. \\
	&\left. \text{and} \ \eqref{E:divergence-uvw-1d} \ \text{and}\ \eqref{E:IFC-uvw-1d} \ \text{hold}\right\}.
\end{aligned}
$$
\beq\label{E:divergence-uvw-1d}
(\perm(\cdot,\omega)  u_1)' + \ri |k_0| \perm(\cdot,\omega) v = 0 \quad  \text{on} \quad \R_\pm,
\eeq
\beq\label{E:IFC-uvw-1d}
\begin{aligned}
	\llbracket \perm(\cdot,\omega)  u_1 \rrbracket = \llbracket v \rrbracket = \llbracket v' - \ri |k_0| u_1 \rrbracket = \llbracket w \rrbracket = \llbracket w' \rrbracket = 0.
\end{aligned}
\eeq

Note that, unlike in \eqref{E:IFC-u}, the notation $\llbracket f \rrbracket$ in \eqref{E:IFC-uvw-1d} denotes the difference of the classical limits $f(0+)-f(0-)$, which exist since all the arguments in \eqref{E:IFC-uvw-1d} are in $H^1(\R_\pm)$. Moreover, $\widetilde{L}_{k_0}$ is an operator pencil with its spectrum defined analogously to that of $\mathcal{L}$.

%


Due to the decoupling of the system \eqref{3}, \eqref{4} we find all eigenvalues $\omega$ of $\widetilde{L}_{k_0}$ by studying solutions of $\widetilde{L}_{k_0}(\omega)\psi=0$ of the form $\psi=(u_1,v,0)^\top$ and those of the form $\psi=(0,0,w)^\top$. 

For the former case we assume that there are fundamental systems of \eqref{3} on $\R_+$ denoted by $\{v^{(1)}_+, v^{(2)}_+\}$ and on $\R_-$ by $\{v^{(1)}_-, v^{(2)}_-\}$ with
\beq\label{E:L2-cond-v}
v^{(1)}_-\in L^2(\R_-), \quad v^{(2)}_- \notin L^2(\R_-), \quad v^{(1)}_+ \notin L^2(\R_+), \quad \text{and}  \quad v_+^{(2)} \in L^2(\R_+)
\eeq
and set
$$v:=\begin{cases}
	\alpha v^{(1)}_- & \text{on} \ \R_-,\\
	\beta v^{(2)}_+	 & \text{on} \ \R_+,
\end{cases}
$$
and $u_1:=-\frac{\ri |k_0|}{|k_0|^2-\omega^2\perm}v'$ with $\alpha, \beta \in \C$. We consider $\omega \in \C \setminus (\Omega^+_{|k_0|^2} \cup \Omega^-_{|k_0|^2})$. Then we have $u_1\in H^1(\R_\pm)$ and $(u_1,v)$ satisfies all $L^2$-conditions in $D(\widetilde{L}_{k_0}(\omega))$. For $k_0\neq 0$ the interface conditions $\llbracket v\rrbracket=\llbracket\perm u_1\rrbracket=0$ read
\beq\label{E:syst-alpha-beta-v}
\begin{pmatrix}
	-v^{(1)}_-(0-) & v^{(2)}_+(0+)\\
	-\frac{\perm(0-)}{|k_0|^2-\omega^2\perm(0-)}v^{(1)'}_-(0-) & \frac{\perm(0+)}{|k_0|^2-\omega^2\perm(0+)}v^{(2)'}_+(0+)
\end{pmatrix}\begin{pmatrix}
	\alpha \\ \beta
\end{pmatrix} =0.
\eeq
The last interface condition $\llbracket \ri |k_0| u_1-v'\rrbracket=0$ follows from the second equation in \eqref{E:syst-alpha-beta-v}. For $k_0=0$ the condition $\llbracket\perm u_1\rrbracket=0$ holds trivially and $\llbracket v\rrbracket=\llbracket \ri |k_0| u_1-v'\rrbracket=0$ reduces to
\beq\label{E:syst-alpha-beta-v-k0}
\begin{pmatrix}
	-v^{(1)}_-(0-) & v^{(2)}_+(0+)\\
	-v^{(1)'}_-(0-) & v^{(2)'}_+(0+)
\end{pmatrix}\begin{pmatrix}
	\alpha \\ \beta
\end{pmatrix} =0.
\eeq
In summary, the interface conditions \eqref{E:IFC-uvw} can be satisfied by $v\neq 0$ if and only if
\beq\label{E:det-v-eq}
\det  \begin{pmatrix}
	v^{(1)}_-(0-) & v^{(2)}_+(0+)\\
	\frac{\perm(0-)}{|k_0|^2-\omega^2\perm(0-)}v^{(1)'}_-(0-) & \frac{\perm(0+)}{|k_0|^2-\omega^2\perm(0+)}v^{(2)'}_+(0+)
\end{pmatrix}=0.
\eeq
Note that condition \eqref{E:det-v-eq} is the same as $d(k_0)=0$ for $d$ defined in \eqref{E:d} - however with the fundamental systems satisfying slightly weaker conditions than in Section \ref{S:resol-set}.

Finally, because $(\perm u_1)'+\ri |k_0|\perm v=\ri |k_0|\left(-\left(\frac{\perm}{|k_0|^2-\omega^2\perm}v'\right)'+\perm v \right)$, the divergence condition \eqref{E:divergence-uvw-1d} follows from \eqref{3}.

Similarly, for eigenfunctions of the form  $\psi=(0,0,w)^\top$, we assume that there are fundamental systems of \eqref{4} on $\R_+$ denoted by $\{w^{(1)}_+, w^{(2)}_+\}$ and on $\R_-$ by $\{w^{(1)}_-, w^{(2)}_-\}$ with 
\beq\label{E:L2-cond-w}
w^{(1)}_-\in L^2(\R_-), \ w^{(2)}_-\notin L^2(\R_-), \ \ w^{(1)}_+ \notin L^2(\R_+), \quad \text{and} \quad w^{(2)}_+ \in L^2(\R_+)
\eeq
and set
$$w:=\begin{cases}
	\alpha w^{(1)}_- & \text{on} \ \R_-,\\
	\beta w^{(2)}_+	& \text{on} \ \R_+
\end{cases} $$
with $\alpha, \beta \in \C$. The interface conditions $\llbracket w\rrbracket =\llbracket w'\rrbracket=0$ are
\beq\label{E:syst-alpha-beta-w}
\begin{pmatrix}
	-w^{(1)}_-(0-) & w^{(2)}_+(0+)\\
	-w^{(1)'}_-(0-) & w^{(2)'}_+(0+)
\end{pmatrix}\begin{pmatrix}
	\alpha \\ \beta
\end{pmatrix} =0.
\eeq
They can be satisfied with $w\neq 0$ if and only if
\beq\label{E:det-w-eq}
\det  \begin{pmatrix}
	w^{(1)}_-(0-) & w^{(2)}_+(0+)\\
	w^{(1)'}_-(0-) &w^{(2)'}_+(0+)
\end{pmatrix}=0.
\eeq
The divergence condition \eqref{E:divergence-uvw-1d} is trivially satisfied since $u_1=v=0$.

Note again that condition \eqref{E:det-w-eq} is the same as $\widetilde{d}(k_0)=0$ for $\widetilde{d}$ defined in \eqref{E:dtil} - up to the slightly different conditions on the fundamental system.

In summary, we have the following lemma.
\blem\label{L:evals-Lk}
Assume \eqref{E:ass-perm}, let $k_0\in \R^{N-1}$  and let $\{v^{(1)}_\pm, v^{(2)}_\pm\}$ and $\{w^{(1)}_\pm, w^{(2)}_\pm\}$ denote fundamental systems of \eqref{3} and \eqref{4} on $\R_\pm$ with $k=k_0$, respectively, satisfying the properties \eqref{E:L2-cond-v} and \eqref{E:L2-cond-w}, resp. Define
\beq\label{E:Nk0v}
N^{(v)}_{k_0}:=
\{\omega \in D(\perm)\setminus (\Omega^+_{|k_0|^2} \cup \Omega^-_{|k_0|^2}): \ \eqref{E:L2-cond-v}  \ \text{and} \ \eqref{E:det-v-eq}\ \text{hold}\}
\eeq
and
\beq\label{E:Nk0w}
N^{(w)}_{k_0}:=\{\omega \in D(\perm): \eqref{E:L2-cond-w}   \ \text{and} \ \eqref{E:det-w-eq} \ \text{hold}\}.
\eeq
Then
$$N^{(v)}_{k_0} \cup N^{(w)}_{k_0} \subset \sigma_p(\widetilde{L}_{k_0}).$$

Corresponding to $\omega \in N^{(v)}_{k_0}$  there is a solution
\beq\label{E:psi-v}
(u_1,v,w)^\top=(\tfrac{-\ri |k_0|v'}{|k_0|^2-\omega^2\perm},v,0)^\top=:\psi^{(v)}
\eeq of the homogenous version of \eqref{3}-\eqref{5}, \eqref{E:IFC-uvw}, and \eqref{E:divergence-uvw-1d} with $v$ given by
\beq\label{eq:vsol}
v:=\begin{cases}
	\alpha v^{(1)}_- & \text{on} \ \R_-,\\
	\beta v^{(2)}_+	 & \text{on} \ \R_+,
\end{cases}
\eeq
where $(\alpha, \beta)\in \C^2$ solves \eqref{E:syst-alpha-beta-v}.

Corresponding to $\omega \in N^{(w)}_{k_0}$  there is a solution
\beq\label{E:psi-w}
(u_1,v,w)^\top=(0,0,w)^\top=:\psi^{(w)}
\eeq
of the same system  with $w$ given by
\beq\label{ew:wsol}
  w:=\begin{cases}
	\alpha w^{(1)}_- & \text{on} \ \R_-,\\
	\beta w^{(2)}_+	& \text{on} \ \R_+
\end{cases} 
\eeq
where  $(\alpha, \beta)\in \C^2$ solves \eqref{E:syst-alpha-beta-w}.
\elem

\brem\label{R:k-abs}
Note that because the dependence on $k_0$ in all of   \eqref{3}-\eqref{5}, \eqref{E:IFC-uvw}, and \eqref{E:divergence-uvw-1d}  is only via the absolute value $|k_0|$, we have
$$N_{k_0}^{(v)}=N_{l}^{(v)} \ \text{and} \ N_{k_0}^{(w)}=N_{l}^{(w)} \ \text{for all} \ |k_0|=|l|.$$
\erem

\medskip

As we show next, the solution $(u_1,v,w)$ corresponding to $\omega \in N^{(v)}_{k_0}$ for some $k_0\in \R^{N-1}$ and the solution corresponding to  $\omega \in N^{(w)}_{k_0}$ for some $k_0\in \R^{N-1}$ can be used to generate a Weyl sequence for the operator pencil $\cL$. Since this can be carried out for any $k_0\in \R^{N-1}$, it makes sense to study the unions
\beq\label{E:Nv}
N^{(v)}:=\bigcup_{k_0\in \R^{N-1}}N^{(v)}_{k_0} \ \text{and } \ N^{(w)}:=\bigcup_{k_0\in \R^{N-1}}N^{(w)}_{k_0}.
\eeq

\bthm\label{T:Weyl-sp}
We have
$$N^{(v)} \cup N^{(w)} \subset \sigma_{\mathrm{Weyl}}(\cL).$$
\ethm
\bpf
With the help of Lemmas \ref{L:equiv-op} and \ref{lem:equiv} we work only with the operator $\widetilde{L}(\omega)$. We divide the proof into three parts. In the parts 1) and 2) we study the case $\omega \in N^{(v)}$ and in the last part we study $\omega \in N^{(w)}$.

1) First, we assume $\omega \in  N^{(v)}_{k_0}$ with $k_0\neq 0$. The corresponding eigenfunction of $\widetilde{L}_{k_0}(\omega)$ is denoted by $\psi^{(v)}$, see \eqref{E:psi-v}. Recall that the last component of $\psi^{(v)}$ is zero.
To construct the Weyl sequence $\xi_n=(u_{1,n}, v_n, w_n)^\top$, $n \in\N$, we first choose an arbitrary $\zeta \in \R^{N-1}\setminus\{0\}$ and define $\theta_n=(\theta_{1,n}, \theta_{v,n}, \theta_{w,n})^\top$,
\beq\label{eq:thetan}
\theta_n(x_1,k) := e^{-\ri n^2 (k-k_0)\cdot \zeta}n^\frac{N-1}{2}|k|^2\widehat\varphi(n(k-k_0)) \psi^{(v)}(x_1) + \widehat{r}_n(x_1,k),
\eeq
where $k=k_2$ if $N=2$, $k=(k_2,k_3)$ if $N=3$, $\varphi \in C^\infty_c(\R^{N-1},\R)$, and
$$
\widehat{r}_n(x_1,k):=\left(0, -n^\frac{N-1}{2}|k|(|k|-|k_0|)e^{-\ri n^2 (k-k_0)\cdot \zeta}\widehat\varphi(n(k-k_0)) \psi^{(v)}_2(x_1),0\right)^\top.
$$
Note that $\theta_n$ is the Fourier transform (with respect to $x_\parallel$) of
$$
\begin{aligned}
	\widecheck{\theta}_n(x_1,x_\parallel)=~& n^{-\frac{N-1}{2}}\left(|k_0|^2\varphi\left(\frac{x_\parallel-\zeta n^2}{n}\right) -2\ri n^{-1}k_0\cdot \nabla \varphi\left(\frac{x_\parallel-\zeta n^2}{n}\right) -n^{-2}\Delta \varphi\left(\frac{x_\parallel-\zeta n^2}{n}\right)\right)e^{\ri k_0\cdot x_\parallel}\psi^{(v)}(x_1)\\
	&+r_n(x_1,x_\parallel),
\end{aligned}
$$
where $x_\parallel=x_2$ if $N=2$ and $x_\parallel=(x_2,x_3)$ if $N=3$. The correction $r_n$  ensures that $\theta_n$ satisfies the divergence condition. Indeed, since $(\perm \psi^{(v)}_1)'+\ri |k_0| \perm \psi^{(v)}_2=0$, we get
$$(\perm \theta_{1,n})'+\ri |k|\perm \theta_{v,n}=\perm\left(\ri (|k|-|k_0|)\alpha_n \psi^{(v)}_2 + \ri |k| \widehat r_{n,2}\right)=0,$$
where 
$$\alpha_n(k):=e^{-\ri n^2 (k-k_0)\cdot \zeta}n^\frac{N-1}{2}|k|^2\widehat\varphi(n(k-k_0)).$$

The interface conditions $\llbracket \theta_{w,n}\rrbracket = \llbracket \pa_{x_1}\theta_{w,n}\rrbracket=0$ hold automatically because $\theta_{w,n}=0$. The conditions $\llbracket \theta_{v,n} \rrbracket=\llbracket \perm \theta_{1,n} \rrbracket = 0$ are satisfied too. This is because $\theta_{v,n}=\alpha_n \frac{|k_0|}{|k|}\psi^{(v)}_2$ and $\theta_{1,n} = \alpha_n \psi^{(v)}_1$ and  we have $\llbracket \psi^{(v)}_2 \rrbracket=\llbracket \perm \psi^{(v)}_1 \rrbracket = 0$.

The remaining interface condition in \eqref{E:IFC-uvw-1d} is not satisfied as
$$\llbracket \ri |k| \theta_{1,n} - \pa_{x_1}\theta_{v,n}\rrbracket = \alpha_n \frac{|k|^2-|k_0|^2}{|k||k_0|}\left\llbracket \pa_{x_1} \psi^{(v)}_2\right\rrbracket=:\gamma_n(k),$$
where we have used the fact that $\llbracket \perm \psi_1^{(v)}\rrbracket=0$ and $(|k_0|^2-W) \psi_1{(v)} + \ri|k_0|
\pa_{x_1} \psi_2^{(v)} =0$.
For future reference, we note that
\beq\label{E:gamma_n}
\gamma_n(k) = n^\frac{N-1}{2}\frac{|k|(|k|^2-|k_0|^2)}{|k_0|}\widehat{\varphi}(n(k-k_0))e^{-\ri n^2 (k-k_0)\cdot \zeta}\left\llbracket\pa_{x_1} \psi^{(v)}_2\right\rrbracket.
\eeq
The Weyl sequence $\xi_n$ will be chosen by introducing a correction term in such a way that this interface condition is satisfied (and all other conditions remain to hold). We set
\beq\label{E:ksi-n}
\xi_n=(u_{1,n}, v_n, w_n)^\top := c_n(\theta_n - \gamma_n \widehat{s}), \quad \widehat{s} = (\widehat{s}_1, \widehat{s}_2, 0 )^\top,
\eeq
with $\widehat{s}$ such that
\begin{eqnarray}
	\pa_{x_1}(\widehat{s}_1 \perm) +\ri \perm |k|\widehat{s}_2&=0, \label{E:div-s}\\
	\llbracket \widehat{s}_2 \rrbracket&=0, \label{E:ifc-s2} \\
	\llbracket \ri |k|\widehat{s}_1 -\pa_{x_1} \widehat{s}_2\rrbracket&=1, \label{E:ifc-mix-s}\\
	\llbracket \perm\widehat{s}_1 \rrbracket&=0. \label{E:ifc-ws1}
\end{eqnarray}
The constants $c_n>0$ are selected so that $\|\xi_n\|_{L^2(\R^N)^3}=1$ for all $n$.

For \eqref{E:div-s} we set
\beq\label{E:s2-hat}
\widehat{s}_2:=\frac{\ri}{|k|\perm}\pa_{x_1}(\widehat{s}_1\perm).
\eeq
The following $\widehat{s}_1$ is one possible choice satisfying \eqref{E:ifc-s2}-\eqref{E:ifc-ws1}, as one easily checks.
\beq\label{E:s1-hat}
\widehat{s}_1=\begin{cases}
	\left(\perm_-^{-1}+a(k)x_1^2\right)\eta(x_1), & x_1<0,\\
		\perm_+^{-1}\eta(x_1), & x_1>0,
\end{cases}
\eeq
where $\eta\in C^\infty_c(\R)$, $\eta(0)=1, \eta'(0)=\eta''(0)=0$ and $a(k):=-\frac{|k|}{2}\left(\ri + |k|\llbracket\perm^{-1}\rrbracket\right)$.

As a preliminary step before checking the $L^2$-conditions, we study $\widehat{s}$ and the derivatives $\pa_{x_1}\widehat{s}_1$, $\pa_{x_1}\widehat{s}_2$, and $\pa_{x_1}^2\widehat{s}_2$, which appear later in $\widetilde{L}(\omega)\widehat{s}$. First, we have
$$\|\widehat{s}_1(\cdot,k)\|_{L^2(\R_-)}\leq \|\perm_-^{-1} \eta\|_{L^2(\R_-)} + |a(k)|\|x_1^2\eta\|_{L^2(\R_-)}\leq c(1+|k|^2)$$
and
$$\|\widehat{s}_1(\cdot,k)\|_{L^2(\R_+)}\leq \|\perm_+^{-1} \eta\|_{L^2(\R_+)} \leq c$$
producing
\beq\label{E:s1-L2}
\|\widehat{s}_1(\cdot,k)\|_{L^2(\R)} \leq c(1+|k|^2).
\eeq
For $\widehat{s}_2$ we have
$$\widehat{s}_2(x_1,k)=\begin{cases} \ri|k|^{-1}\perm_-^{-1}(x_1)\left(\eta'(x_1)+a(k)\pa_{x_1}(\perm_-(x_1) x_1^2\eta(x_1))\right), & x_1<0,\\
	\ri|k|^{-1}\perm_+^{-1}(x_1)\eta'(x_1), & x_1>0,
\end{cases}
$$
hence
$$
\|\widehat{s}_2(\cdot,k)\|_{L^2(\R_-)}\leq c|k|^{-1}(1+|k|^2)=c(|k|^{-1}+|k|), \qquad \|\widehat{s}_2(\cdot,k)\|_{L^2(\R_+)}\leq c|k|^{-1},
$$
resulting in 
\beq\label{E:s2-L2}
\|\widehat{s}_2(\cdot,k)\|_{L^2(\R)}\leq c(|k|^{-1}+|k|).
\eeq

For the derivatives, we calculate
$$
\pa_{x_1} \widehat{s}_1(x_1,k) = \begin{cases}
	\left(2a(k) x_1-\frac{\perm_-'(x_1)}{\perm_-^2(x_1)}\right)\eta(x_1) + \left(\perm_-^{-1}+a(k)x_1^2\right)\eta'(x_1), & x_1<0,\\
	-\frac{\perm_+'(x_1)}{\perm_+^2(x_1)}\eta(x_1) + \perm_+^{-1}(x_1)\eta'(x_1), & x_1>0.
\end{cases}
$$
Due to  the definition of $\widehat{s}_2$ and 
$$\pa_{x_1}(\perm\widehat{s}_1)(x_1,k) =-\frac{\ri\perm'(x_1)}{|k|\perm^2(x_1)}\pa_{x_1}(\perm \widehat{s}_1)(x_1,k)+\frac{\ri}{|k|\perm(x_1)}\pa_{x_1}^2(\perm \widehat{s}_1)(x_1,k),
$$
one can easily observe that $\pa_{x_1}^m\widehat{s}_2(x_1,k), m=1,2$, have the form
$$
\pa_{x_1}\widehat{s}_2(x_1,k)=\begin{cases}
	|k|^{-1}(f_1(x_1) + g_1(x_1)a(k)), & x_1<0,\\
	|k|^{-1}h_1(x_1), & x_1>0
	\end{cases}
$$
and
$$
\pa_{x_1}^2\widehat{s}_2(x_1,k)=\begin{cases}
	|k|^{-1}(f_2(x_1) + g_2(x_1)a(k)), & x_1<0,\\
	|k|^{-1}h_2(x_1), & x_1>0
\end{cases}
$$
with some functions $f_1,f_2,g_1,g_2\in L^2(\R_-)$ and $h_1,h_2\in L^2(\R_+)$. The $L^2$-property of these functions follows from $\eta\in C^\infty_c(\R)$ and  $\perm \in W^{3,\infty}(\R_\pm)$. Indeed, derivatives of $\perm$ of order one, two, and three appear in these functions.

As a result we obtain
\beq\label{E:s12-der-L2}
\|\pa_{x_1}\widehat{s}_1(\cdot,k)\|_{L^2(\R)}\leq c(1+|k|^2) \quad \text{and} \quad \|\pa_{x_1}^m\widehat{s}_2(\cdot,k)\|_{L^2(\R)}\leq c(|k|^{-1}+|k|), m=1,2.
\eeq

For the $L^2$-estimates of $\xi_n$ (and $\widetilde{L}(\omega)\xi_n$) it is useful to understand $\||k|^p \gamma_n\|_{L^2(\R^{N-1})}$ for $p\in \Z, p \geq -1$. For such $p$ we have
 \beq\label{E:gam-kp-est}
 \begin{aligned}
 	\||k|^p\gamma_n\|_{L^2(\R^{N-1})}^2 &\leq c \left|\left\llbracket\pa_{x_1}\psi_2^{(v)}\right\rrbracket\right|^2 \frac{n^{N-1}}{|k_0|^2} \int_{\R^{N-1}} |k|^{2+2p}(|k|^2 -|k_0|^2)^2|\widehat{\varphi}(n(k-k_0))|^2  \dd k\\
 	&\leq c \int_{\R^{N-1}}(|k_0|+|\tfrac{\kappa}{n}|)^{2+2p} (2n^{-1}|\kappa||k_0|+n^{-2}|\kappa|^2)^2|\widehat{\varphi}(\kappa)|^2 \dd \kappa\\
 	&\leq cn^{-2}\|\varphi\|^2_{H^{3+p}(\R^{N-1})},
 \end{aligned}
 \eeq
where we have used the fact $\|f\|^2_{H^j(\R^{N-1})}=\int_{\R^{N-1}}(1+|k|^j)^2 |\widehat{f}(k)|^2\dd k$ for $j \in \N$. 

 Let us now check the $L^2$-conditions in $\widetilde{\mathcal{D}}(\omega)$. First, we show that $\xi_n\in L^2(\R^N)$ and that $m\leq c_n\leq M$ for all $n\in \N$ with some $m>0$ and $M>0$ independent of $n$. This will follow from $\psi^{(v)} \in L^2(\R^N)^3$, from $\widehat{r}_n, \gamma_n \widehat{s}\to 0$ in $L^2$ and from upper and lower bounds on $\|\alpha_n\psi^{(v)}\|_{L^2(\R^N)^3}$.

Since $\widehat{r}_{n,2}(x_1,k)=-n^\frac{N-1}{2}|k|(|k|-|k_0|)\widehat{\varphi}(n(k-k_0))\psi_2^{(v)}(x_1)e^{-\ri n^2(k-k_0)\cdot \zeta}$ and $\widehat{r}_{n,1} = \widehat{r}_{n,3} = 0$, we have
\beq\label{E:rn-est}
\begin{aligned}
	\|\widehat{r}_n\|_{L^2(\R^N)^3}^2&=\|\psi_2^{(v)}\|_{L^2(\R)}^2\int_{\R^{N-1}} |k_0+\tfrac{\kappa}{n}|^2(|k_0+\tfrac{\kappa}{n}|-|k_0|)^2|\widehat{\varphi}(\kappa)|^2\dd\kappa\\
	&\leq n^{-2}\|\psi_2^{(v)}\|_{L^2(\R)}^2\int_{\R^{N-1}}|k_0+\tfrac{\kappa}{n}|^2|\kappa|^2|\widehat{\varphi}(\kappa)|^2\dd\kappa\leq cn^{-2}\|\varphi\|_{H^2(\R^{N-1})}^2\to 0 \ (n\to\infty).
\end{aligned}
\eeq
 Note that $\|\varphi\|_{H^2(\R^{N-1})}<\infty$ since $\varphi\in C^\infty_c(\R^{N-1})$.

For $\gamma_n \widehat{s}$ we use \eqref{E:s1-L2}, \eqref{E:s2-L2}, and \eqref{E:gam-kp-est} to conclude
\beq\label{E:gam-s-est}
\|\gamma_n\widehat{s}\|_{L^2(\R^N)^3}  \leq cn^{-1}\|\varphi\|_{H^5(\R^{N-1})} \to 0  \ (n\to\infty).
\eeq

Next, we show the boundedness of $\|\alpha_n\|_{L^2(\R^{N-1})}$ from above and away from zero. Because $\alpha_n(k)=n^\frac{N-1}{2}|k|^2\widehat{\varphi}(n(k-k_0))e^{-\ri n^2(k-k_0)\cdot \zeta} $, using the dominated convergence, we obtain
\beq\label{E:alpha-est}
\frac{1}{2}|k_0|^4\|\widehat{\varphi}\|_{L^2(\R^{N-1})}^2\leq
\|\alpha_n\|_{L^2(\R^{N-1})}^2 = \int_{\R^{N-1}} |k_0+\tfrac{\kappa}{n}|^4|\widehat{\varphi}(\kappa)|^2\dd \kappa\leq 2|k_0|^4 \|\widehat{\varphi}\|^2_{L_2^2(\R^{N-1})}
\eeq
for all $n$ large enough.

Estimates \eqref{E:rn-est}, \eqref{E:gam-s-est}, and \eqref{E:alpha-est} imply $\xi_n\in L^2(\R^N)^3$ and $m\leq c_n\leq M$ for all $n\in \N$ with some $m$ and $M$ independent of $n$.

We continue by checking the remaining $L^2$-conditions on $\xi_n$ in order to belong to $\widetilde{\cD}_\omega$. Because $w_n=0$, it is clearly sufficient to show
$$ |k|^2 u_{1,n}, |k|\pa_{x_1} u_{1,n}, \pa_{x_1} v_{n} \pa_{x_1}^2v_{n}, |k| \pa_{x_1}v_{n} \in L^2(\R^N_\pm).$$
Note that $ |k|^2 u_{1,n}, \pa_{x_1} v_{n} \in L^2(\R^N_\pm)$ implies also $\pa_{x_1} v_{n} - \ri |k| u_{1,n} \in L^2(\R^N_\pm)$.
As $\widehat{\varphi}\in \mathcal{S}(\R^{N-1})$ and because every component in $\xi_n$ includes $\widehat{\varphi}(n(k-k_0))$ as a factor, we automatically get $|k|^2 u_{1,n} \in L^2(\R^N)$. For the same reason, $|k|\pa_{x_1} \xi_{n,j} \in L^2(\R^N_\pm)$ if $\pa_{x_1} \xi_{n,j} \in L^2(\R^N_\pm)$. Hence, we only need to verify that
$$\pa_{x_1} u_{1,n}, \pa_{x_1}^m v_{n}  \in L^2(\R^N_\pm) \quad \text{for} \quad m=1,2.$$
We have
$$\pa_{x_1}u_{1,n}=c_n(\alpha_n\pa_{x_1}\psi_1^{(v)} - \gamma_n \pa_{x_1}\widehat{s}_1),$$
where
$$\pa_{x_1}\psi_1^{(v)} = -\ri |k_0|\left(\frac{\psi^{(v)''}_2}{|k_0|^2-\omega^2\perm}+\frac{\omega^2\perm'}{(|k_0|^2-\omega^2\perm)^2}\psi^{(v)'}_2\right),$$
which is in $L^2(\R)$ due to $\psi_2^{(v)}\in H^2(\R_+)\oplus H^2(\R_-)$ (because $\psi^{(v)}\in D(\widetilde{L}_{k_0}(\omega))$),  $\perm\in W^{1,\infty}(\R_+)\oplus W^{1,\infty}(\R_-)$, and $\omega \notin \Omega$. Together with $\alpha_n \in L^2(\R^{N-1})$ this implies $\alpha_n \pa_{x_1}\psi^{(v)}_1\in L^2(\R^N)$. For the second term, i.e., $\gamma_n \pa_{x_1}\widehat{s}_1$, we use \eqref{E:s12-der-L2} and \eqref{E:gam-kp-est} to obtain $\|\gamma_n \pa_{x_1}\widehat{s}_1\|_{L^2(\R^N)}\leq c n^{-1}\|\varphi\|_{H^{5}(\R^{N-1})}$.

Next,
$$\pa_{x_1}^mv_{n}=c_n(\alpha_n\pa_{x_1}^m\psi_2^{(v)}+\pa_{x_1}^m\widehat{r}_{n,2}-\gamma_n \pa_{x_1}^m\widehat{s}_2), \ m=1,2.$$
The first term is in $L^2(\R^N)$ due to $\psi_2^{(v)}\in H^2(\R_+)\oplus H^2(\R_-)$ and  $\alpha_n \in L^2(\R^{N-1})$. For the second term we have
$$\pa_{x_1}^m \widehat{r}_{n,2}(x_1,k)=-n|k|(|k|-|k_0|)e^{-\ri n^2(k-k_0)\cdot \zeta} \widehat{\varphi}(n(k-k_0))\pa_{x_1}^m\psi^{(v)}_2(x_1),$$
which is again in $L^2(\R^N)$ using, in addition, $\widehat{\varphi}\in \mathcal{S}$. For the third term we use 
\eqref{E:s12-der-L2} and \eqref{E:gam-kp-est} to estimate $$\|\gamma_n \pa_{x_1}^m\widehat{s}_2\|_{L^2(\R^N)}\leq c n^{-1}\|\varphi\|_{H^{4}(\R^{N-1})}, m=1,2.$$
We conclude that $\xi_n\in \widetilde\cD_\omega$ for all $n\in \N$.

Next, we show $\check{\xi}_n \rightharpoonup 0$  in $L^2$. Thanks to \eqref{E:rn-est}, \eqref{E:gam-s-est}, and the boundedness of $(c_n)$ it is sufficient to show
\beq\label{E:weak-conv}
\int_{\R^N} \widecheck{\alpha}_n(x_\parallel)\psi^{(v)}_j(x_1)f(x_1,x_\parallel)\dd (x_1,x_\parallel) \to 0 \ (n\to \infty) \quad \text{for all} \quad f\in L^2(\R^N) \ \text{and} \ j=1,2,
\eeq
where $\widecheck{\alpha}_n$ is the inverse Fourier transformation of $\alpha_n$, i.e. (see the formula for $\widecheck{\theta}_n$)
$$
\begin{aligned}
	\widecheck{\alpha}_n(x_\parallel)& =n^{-\frac{N-1}{2}} e^{\ri k_0\cdot x_\parallel} \left(|k_0|^2 \varphi\left(\frac{x_\parallel-n^2\zeta}{n}\right) - 2\ri n^{-1}k_0\cdot \nabla \varphi\left(\frac{x_\parallel-n^2\zeta}{n}\right) - n^{-2} \Delta \varphi\left(\frac{x_\parallel-n^2\zeta}{n}\right) \right), 
\end{aligned}
$$
where we recall that $x_\parallel\in \R^{N-1}$. Because $\varphi \in C^\infty_c(\R^{N-1})$, we only need to show for $j=1,2$
$$I_{j,n}:=n^{-\frac{N-1}{2}}\int_{\R^N} \mu\left(\frac{x_\parallel-n^2\zeta}{n}\right) \psi^{(v)}_j(x_1)f(x_1,x_\parallel)\dd x \to 0 \ (n\to \infty) \quad \text{for all} \quad f\in L^2(\R^N), \mu \in C^\infty_c(\R^{N-1}).$$
For any $\mu \in C^\infty_c(\R^{N-1})$
\beq\label{E:Lseq-conv-v}
\begin{aligned}
	\left|n^{-\frac{N-1}{2}}\int_{\R^{N-1}} \mu\left(\frac{y-n^2\zeta}{n}\right) f(x_1,y)\dd y \right|&= \left| n^{-\frac{N-1}{2}}\int_{nK+n^2\zeta} \mu \left(\frac{y-n^2\zeta}{n}\right) f(x_1,y)\dd y\right|\\
	&\leq \|\mu\|_{L^2(K)}\|f(x_1,\cdot)\|_{L^2(nK+n^2\zeta)}\\
	& \leq \|\mu\|_{L^2(K)}\|f(x_1,\cdot)\|_{L^2(\R^{N-1}\setminus [-n,n]^{N-1})} \to 0 \ (n\to \infty),
\end{aligned}
\eeq
where $K:=\supp(\mu)$ and the last inequality holds for all $n$ large enough. Hence (with the help of Fubini's theorem)
\beq\label{E:Lseq-conv-v2}
\begin{aligned}
	|I_{j,n}|&\leq  \|\mu\|_{L^2(K)}\int_{\R}|\psi^{(v)}_j(x_1)|\|f(x_1,\cdot)\|_{L^2(\R^{N-1}\setminus [-n,n]^{N-1})}\dd x_1 \\
	&\leq  \|\mu\|_{L^2(K)} \|\psi_j^{(v)}\|_{L^2(\R)}\|f\|_{L^2(\R^N\setminus (\R\times [-n,n]^{N-1}))} \to 0
\end{aligned}
\eeq

It remains to verify $\widetilde{L}(\omega)\xi_n \to 0$ in $L^2$.  As seen in \eqref{E:ksi-n} and \eqref{eq:thetan}, $\xi_n$ consists of three terms. We study these separately.
 From
$$
\widetilde{L}(\omega) \widehat{r}_n =  -n^\frac{N-1}{2}|k|(|k|-|k_0|)e^{-\ri n^2 (k-k_0)\cdot \zeta} \widehat{\varphi}(n(k-k_0))
\begin{pmatrix}
	\ri|k|\pa_{x_1}\psi^{(v)}_2\\
	-\pa_{x_1}^2\psi^{(v)}_2-W\psi^{(v)}_2\\
	0
\end{pmatrix}
$$
one obtains similarly to \eqref{E:rn-est}
$$\|\widetilde{L}(\omega) \widehat{r}_n\|_{L^2(\R^N)^3}^2 \leq cn^{-2}\|\varphi\|^2_{H^3(\R^{N-1})}(\|\psi^{(v)}_2\|^2_{H^2(\R_+)}+\|\psi^{(v)}_2\|^2_{H^2(\R_-)}) \to 0 \quad (n\to \infty).$$
Next, 
$$\widetilde{L}(\omega)(\gamma_n \widehat{s}) = \gamma_n \begin{pmatrix}
	(|k|^2-\omega^2\perm)\widehat{s}_1 +\ri |k| \pa_{x_1}\widehat{s}_2\\
	\ri |k|\pa_{x_1}\widehat{s}_1 - (\pa_{x_1}^2+\omega^2\perm)\widehat{s}_2 \\
	0
\end{pmatrix}
$$
Using again \eqref{E:s1-L2},  \eqref{E:s2-L2}, \eqref{E:s12-der-L2}, and \eqref{E:gam-kp-est}, we get
$$\|\widetilde{L}(\omega)(\gamma_n \widehat{s})\|_{L^2(\R^N)^3}\leq c n^{-1}\|\varphi\|_{H^7(\R)} \to 0 \quad (n\to \infty).$$

The last term in $\widetilde{L}(\omega)\xi_n$ is $\widetilde{L}(\omega)(\alpha_n \psi^{(v)}) = \alpha_n \widetilde{L}(\omega) \psi^{(v)}$. Because $\widetilde{L}_{k_0}(\omega)\psi^{(v)}=0$, we have
$$\widetilde{L}(\omega)(\alpha_n \psi^{(v)})= \alpha_n\begin{pmatrix}
	(|k|^2-|k_0|^2)\psi^{(v)}_1 + \ri(|k|-|k_0|)\pa_{x_1}\psi^{(v)}_2\\
	\ri (|k|-|k_0|)\pa_{x_1}\psi^{(v)}_1\\
	0
\end{pmatrix}.$$
Since
$$\int_{\R^{N-1}}|\alpha_n(k)|^2(|k|-|k_0|)^2\dd k \leq n^{-2}\int_{\R^{N-1}} |k_0+\tfrac{\kappa}{n}|^4|\kappa|^2 |\widehat{\varphi}(\kappa)|^2\dd \kappa \to 0 \quad (n\to \infty),$$
analogously $\int_{\R^{N-1}}|\alpha_n(k)|^2(|k|^2-|k_0|^2)^2\dd k \to 0$, and  $\psi^{(v)}\in H^2(\R_+)\oplus H^2(\R_-)$, we also get that $\widetilde{L}(\omega) (\alpha_n \psi^{(v)}) \to 0$ in $L^2$.

In summary, $\widetilde{L}(\omega)\xi_n \to 0$ in $L^2$.

\medskip
2) Let us now choose $\omega \in N^{(v)}_0$. As Lemma \ref{L:evals-Lk} shows, there is a corresponding eigenfunction which has the form
$$\psi^{(v)}=(0,v_0,0)^\top,$$
where $v_0\in H^2(\R)$ solves $v_0'' + \omega^2 \perm(x,\omega)v_0 =0$. Like in part 1) we choose $\zeta \in \R^{N-1}$ and $\varphi \in C^\infty_c(\R^{N-1})$ arbitrary. The naive choice 
$$e^{-\ri n^2 k\cdot \zeta}n^{\frac{N-1}{2}}\widehat{\varphi}(nk)\psi^{(v)}(x_1)$$ 
does not satisfy the divergence condition \eqref{E:divergence-uvw-1d}. To correct for this, we choose
\beq\label{E:thetan}
\theta_n(x_1,k):=(\theta_{1,n},\theta_{v,n},\theta_{w,n})^\top(x_1,k):=\alpha_n(k)\psi^{(v)}(x_1)+\widehat{r}_n(x_1,k),
\eeq
where 
$$\alpha_n(k):=e^{-\ri n^2 k\cdot \zeta}n^{\frac{N-1}{2}}\widehat{\varphi}(nk) \quad \text{and} \quad \widehat{r}_n(x_1,k):=\left(\ri |k| \perm^{-1}(x_1)\omega^{-2}\alpha_n(k)v_0'(x_1),0,0\right)^\top.$$
Clearly, $\theta_n$ satisfies the divergence condition $(\perm \theta_{n,1})'+\ri |k|\perm \theta_{v,n}=0$
because  $v_0'' + \omega^2 \perm(x,\omega)v_0 =0$. 

The interface conditions $\llbracket \theta_{w,n}\rrbracket = \llbracket \pa_{x_1}\theta_{w,n}\rrbracket=0$ hold trivially. The condition $\llbracket \theta_{v,n} \rrbracket=0$ holds because $\llbracket \psi^{(v)}_2 \rrbracket=\llbracket v_0 \rrbracket=0$. The condition $\llbracket \perm \theta_{1,n} \rrbracket = 0$ is satisfied since $v_0\in H^2(\R)$ and hence $\llbracket v_0'\rrbracket=0$. Like in part 1) the remaining interface condition in \eqref{E:IFC-uvw-1d} is not satisfied. Indeed, using again $\llbracket v_0'\rrbracket =0$, we get
\beq\label{E:gamn}
\llbracket \ri |k| \theta_{1,n} - \pa_{x_1}\theta_{v,n}\rrbracket = \ri|k|\llbracket \theta_{1,n}\rrbracket=-|k|^2 \alpha_n(k)\omega^{-2} \llbracket \perm^{-1}v_0'\rrbracket=:\gamma_n(k)\neq 0.
\eeq
Analogously to part 1) we propose the Weyl sequence
$$\xi_n=(u_{1,n}, v_n, w_n)^\top := c_n(\theta_n - \gamma_n \widehat{s}), \quad \widehat{s} = (\widehat{s}_1, \widehat{s}_2, 0 )^\top,$$
with $\theta_n$ in \eqref{E:thetan}, $\gamma_n$ in \eqref{E:gamn}, $\widehat{s}_1$ in \eqref{E:s1-hat}, $\widehat{s}_2$ in \eqref{E:s2-hat}, and $c_n>0$  such that $\|\xi_n\|_{L^2(\R^N)}=1$ for all $n$.

For $\xi_n\in \widetilde{\cD}_\omega$ it remains to check the $L^2$-conditions in $\widetilde{\mathcal{D}}(\omega)$. We start again by showing that $\xi_n\in L^2(\R^N)^3$ and that $m\leq c_n\leq M$ for all $n\in \N$ with some $m>0$ and $M>0$ independent of $n$. First, one easily obtains
 \beq\label{E:alpha-kp-est}
 \||k|^p\alpha_n\|_{L^2(\R^{N-1})}\leq n^{-p}\|\varphi\|_{H^p(\R^{N-1})} \forall p\in \N_0.
 \eeq
This, together with $v_0'\in L^2(\R)$ and $\omega \notin \Omega_\delta$, implies
\beq\label{E:rn-est2}
\|\widehat{r}_n\|_{L^2(\R^{N})^3} \leq cn^{-1}\|v_0'\|_{L^2(\R)}\|\varphi\|_{H^1(\R^{N-1})} \to 0 \quad (n\to \infty).
\eeq
Analogously to \eqref{E:gam-kp-est}, we have for $\gamma_n$ from \eqref{E:gamn} and $p\in \Z, p\geq -1$
 \beq\label{E:gam-kp-est2}
\begin{aligned}
	\||k|^p\gamma_n\|_{L^2(\R^{N-1})}^2 &\leq c \left|\left\llbracket\perm^{-1}v_0'\right\rrbracket\right|^2 n^{N-1}\int_{\R^{N-1}} |k|^{4+2p}|\widehat{\varphi}(nk)|^2  \dd k\\
	&\leq c n^{-4-2p}\int_{\R^{N-1}}|\kappa|^{4+2p}\widehat{\varphi}(\kappa)|^2 \dd \kappa \leq cn^{-4-2p}\|\varphi\|^2_{H^{2+p}(\R^{N-1})}.
\end{aligned}
\eeq
Hence, using again the estimates \eqref{E:s1-L2} and \eqref{E:s2-L2} of $\|s_{1,2}\|_{L^2(\R)}$, we obtain
\beq\label{E:gam-s-est2}
\|\gamma_n \widehat{s}_1\|_{L^2(\R^N)} \leq cn^{-2}\|\varphi\|_{H^4(\R^{N-1})} \to 0 , \quad 
\|\gamma_n \widehat{s}_2\|_{L^2(\R^N)} \leq cn^{-1}\|\varphi\|_{H^3(\R^{N-1})} \to 0 \quad (n\to \infty).
\eeq
As a result we have 
$$\frac 12\|\varphi\|_{L^2(\R^N)}\leq \|\theta_n-\gamma_n \widehat{s}\|_{L^2(\R^N)}\leq 2 \|\varphi\|_{L^2(\R^N)}$$
for all $n$ large enough and we conclude that $m\leq c_n\leq M$ for all $n\in \N$ with some $m>0$ and $M>0$ independent of $n$. 

Just like in the case $k_0\neq 0$, the remaining $L^2$-conditions to be checked for $\xi_n\in \widetilde{\cD}_\omega$ are
$$ |k|^2 u_{1,n}, |k|\pa_{x_1} u_{1,n}, \pa_{x_1} v_{n}, \pa_{x_1}^2v_{n}, |k| \pa_{x_1}v_{n} \in L^2(\R^N_\pm).$$
These hold analogously to the case $k_0\neq 0$ using the fact $v_0\in H^2(\R)$ and the estimates \eqref{E:alpha-kp-est} as well as \eqref{E:gam-kp-est2} together with \eqref{E:s1-L2}, \eqref{E:s2-L2}, and \eqref{E:s12-der-L2}.

Next, we show $\check{\xi}_n\rightharpoonup 0$ in $L^2$ as $n\to\infty$. Due to \eqref{E:rn-est2} and \eqref{E:gam-s-est2} it remains to check $\alpha_n\psi^{(v)}\rightharpoonup 0$, i.e., to show \eqref{E:weak-conv} with $\alpha_n(k):=e^{-\ri n^2 k\cdot \zeta}n^{\frac{N-1}{2}}\widehat{\varphi}(nk)$ and $\psi^{(v)}=(0,v_0,0)^\top$.
We have 
$$
	\widecheck{\alpha}_n(x_\parallel)=n^{-\frac{N-1}{2}} \varphi\left(\frac{x_\parallel-n^2\zeta}{n}\right) 
$$
and the same argument as in \eqref{E:Lseq-conv-v2} shows $\alpha_n\psi^{(v)}\rightharpoonup 0$.

In the last step, we check that $\widetilde{L}(\omega)\xi_n\to 0$ in $L^2$ as $n\to \infty$. Firstly, we have
$$\widetilde{L}(\omega) \widehat{r}_n=\alpha_n(k) v_0' \perm^{-1}\omega^{-2} \left((|k|^2-\omega^2\perm)\ri |k|, -|k|^2,0 \right)^\top.$$
As $\omega \notin \Omega$, we get
$$\|\widetilde{L}(\omega) \widehat{r}_n\|_{L^2(\R^N)^3}\leq c n^{-1}\|v_0'\|_{L^2(\R)}\|\varphi\|_{H^3(\R^{N-1})} \to 0$$
using \eqref{E:alpha-kp-est}. Secondly,
$$\widetilde{L} (\omega)(\gamma_n\widehat{s}) = \gamma_n \left((|k|^2-\omega^2\perm)\widehat{s}_1+\ri|k|\partial_{x_1}\widehat{s}_2,\ri |k|\partial_{x_1}\widehat{s}_1-(\partial_{x_1}^2+\omega^2\perm)\widehat{s}_2,0 \right)^\top.$$
Combining again \eqref{E:s1-L2}, \eqref{E:s2-L2}, and \eqref{E:s12-der-L2} with \eqref{E:gam-kp-est}, we get
$$\|\widetilde{L}(\omega) (\gamma_n\widehat{s})\|_{L^2(\R^N)^3}\leq c n^{-1}\|\varphi\|_{H^6(\R^{N-1})} \to 0.$$
To complete this part of the proof, we show that $\widetilde{L}(\omega)(\alpha_n \psi^{(v)})\to 0$ in $L^2$. Indeed, as
$$\widetilde{L}(\omega)(\alpha_n \psi^{(v)})=\ri |k|\alpha_n\left(v_0', -v_0''-\omega^2\perm v_0, 0\right)^\top,$$
one gets with \eqref{E:alpha-kp-est}
$$\|\widetilde{L}(\omega)(\alpha_n \psi^{(v)})_1\|_{L^2(\R^N)}\leq cn^{-1}\|v_0'\|_{L^2(\R)}\|\varphi\|_{L^2(\R^{N-1})}\to 0$$
and $\widetilde{L}(\omega)(\alpha_n \psi^{(v)})_2=0$.

\medskip
3) Finally, we consider  $\omega \in  N^{(w)}$. Hence, there exists $k_0\in \R^{N-1}$ such that $\omega \in  N^{(w)}_{k_0}$. A corresponding eigenfunction of $\widetilde{L}_{k_0}(\omega)$ is denoted by $\psi^{(w)}$, see \eqref{E:psi-w}. Recall that the first two components of $\psi^{(w)}$ are zero.

We choose the Weyl sequence $\xi_n=(0,0, w_n)^\top, n\in\N,$ as
$$\xi_n(x_1,k_2,k_3):=c n^{\frac{N-1}{2}}e^{-\ri n^2 (k-k_0)\cdot \zeta}\widehat\varphi(n(k-k_0))\psi^{(w)}(x_1),$$
where $\zeta \in \R^{N-1}\setminus\{0\}$ and $\varphi\in C^\infty_c(\R^{N-1},\R)$ are arbitrary and $c=(\|\varphi\|_{L^2(\R^{N-1})}\|\psi^{(w)}\|_{L^2(\R)})^{-1}$ so that $\|\xi_n\|_{L^2(\R^N)}=1,$ which is checked by an easy calculation. Note that $\xi_n$ is the Fourier transform of
$$\check{\xi}_n(x_1,x_\parallel)=c n^{-\frac{N-1}{2}}e^{\ri k_0\cdot x_\parallel}\varphi\left(\frac{x_\parallel-\zeta n^2}{n}\right)\psi^{(w)}(x_1).$$
The divergence condition and all interface conditions are satisfied by $\xi_n$. In detail, the divergence condition as well as the interface conditions involving $u_1$ and $v$ are automatic because the first two components of $\xi_n$ are zero. And the interface conditions $\llbracket w\rrbracket=\llbracket w'\rrbracket=0$ hold because they are satisfied by the third component of $\psi^{(w)}$.

To conclude that $\xi_n\in \widetilde{\cD}_\omega$, the $L^2$-conditions in $\widetilde{\mathcal{D}}$ must be checked. $\xi_n\in L^2(\R^N)^3$ was discussed above. Since $\psi^{(w)}_{1,2}=0$, the only remaining $L^2$-properties are $\pa_{x_1}w_{n}, |k|w_{n}, \pa_{x_1}^2w_{n} - |k|^2w_{n}\in L^2(\R^N)$. These all follow from $\psi^{(w)}_3\in H^2(\R_+)\oplus H^2(\R_-)$ and from $\widehat{\varphi}\in \mathcal{S}(\R^{N-1})$.

The weak convergence $\xi_n\rightharpoonup 0$ in $L^2$ is shown analogously to  \eqref{E:Lseq-conv-v} and \eqref{E:Lseq-conv-v2} leading to
$$\left|\int_{\R^N}\check{\xi}_{n,3}(x)f(x)\dd x \right| \leq c\|\varphi\|_{L^2(K)}\|\psi^{(w)}_3\|_{L^2(\R)}\|f\|_{L^2(\R^N\setminus (\R\times [-n,n]^{N-1}))}\to 0 \quad (n\to \infty)$$
for every $f\in L^2(\R^N)$, where $K:=\supp(\varphi)$.

Finally, for $\widetilde{L}(\omega)\xi_n\to 0$ we use again that $\widetilde{L}_{k_0}(\omega)\psi^{(w)}=0$ to get
$$\widetilde{L}(\omega)\xi_n  = cn^\frac{N-1}{2}e^{-\ri n^2(k-k_0)\cdot \zeta}\widehat{\varphi}(n(k-k_0)) \begin{pmatrix}
	0\\0\\(|k|^2-|k_0|^2)\psi^{(w)}_3
\end{pmatrix}$$
and, with the substitution $\kappa = n(k-k_0)$,
$$\|\widetilde{L}(\omega)\xi_n \|_{L^2(\R^N)^3}^2 = c^2\|\psi_3^{(w)}\|_{L^2(\R)}^2\int_{\R^{N-1}}|\widehat{\varphi}(\kappa)|^2(2n^{-1}k_0\cdot \kappa + n^{-2}|\kappa|^2)\dd\kappa \to 0 \quad (n\to \infty).$$
\epf

\subsubsection{Media Periodic in $\R^N_\pm$}

Here we consider once more the example of periodic materials. In contrast to the case of radiation in the $x_1$-direction, we assume that $\perm(\cdot,\omega)$ is periodic on $\R_+$ as well as on $\R_-$ for each $\omega \in D(\perm)$.
As in Definition \ref{D:Sp}, the
%
	%
	%
	%
	discriminant corresponding to  \eqref{3} on $\R_\pm$ will be denoted by $D^{(v)}_\pm$ and the one for  \eqref{4} on $\R_\pm$ by $D^{(w)}_\pm$.
	We provide a representation of the sets $N_k^{(v)}$ and $N_k^{(w)}$ defined in \eqref{E:Nk0v} and  \eqref{E:Nk0w}, as well as the determinants $d$ and $\widetilde{d}$ from Definition \ref{D:S} using this Floquet-Bloch theory notation. The basic idea is that  a decaying solution $v$ of the homogeneous equation exists on $\R_\pm$ if $D_\pm^{(v)} \notin [-2,2]$. In this case the fundamental system $\psi_1, \psi_2$ from \eqref{eq:6} can be chosen as the fundamental system $v^{(1)}_\pm$ and $v^{(2)}_\pm$ in Definition \ref{D:S} and in \eqref{E:L2-cond-v}. This is because in this case $\Real(\kappa)>0$ in \eqref{eq:6}. An analogous statement holds  for the $w$-equation.
	
	\blem\label{L:Lk-evals-periodic}
	Assume $\perm(\cdot,\omega)$ is periodic on $\R_+$ and on $\R_-$ for each $\omega \in D(\perm)$ and satisfies \eqref{E:ass-perm}. Let $k\in \R^{N-1}$.
	
	Corresponding to the periodic problem \eqref{3} on $\R_\pm$ denote the discriminant by $D_\pm^{(v)}$ and let $d(k)$ be the quantity defined in Definition \ref{D:S}.

Analogously, for \eqref{4} on $\R_\pm$, we define $D_\pm^{(w)}$ and use $\tilde{d}(k)$ from Definition \ref{D:S}.
	
	Then, with $N^{(v)}_k$ and $N^{(w)}_k$ defined as in \eqref{E:Nk0v} and \eqref{E:Nk0w} (with $k_0$ replaced by $k$), respectively,
	$$N^{(v)}_k=\{\omega \in D(\perm)\setminus \Omega:  D^{(v)}_+, D^{(v)}_- \not\in [-2,2] \ \text{and} \ d(k)=0 \ \text{holds}\},$$
	$$N^{(w)}_k=\{\omega \in D(\perm):  D^{(w)}_+, D^{(w)}_-\not\in [-2,2] \ \text{and} \ \widetilde{d}(k)=0 \ \text{holds}\}.$$
	\elem
	\bpf
	As follows from the Floquet-Bloch theory, we get fundamental systems satisfying \eqref{E:L2-cond-v} if and only if
	\beq\label{E:Dv-eval}
	D^{(v)}_+, D^{(v)}_- \not\in [-2,2]
	\eeq
	or
	\beq\label{E:Dw-eval}
	D^{(w)}_+, D^{(w)}_-\not\in [-2,2].
	\eeq
	Note that \eqref{E:det-v-eq}  and $d(k)=0$ are the same - and similarly for \eqref{E:det-w-eq}  and $\tilde{d}(k)=0$.

	\epf
	
	\subsubsection{Media Homogenous in $\R^N_\pm$}
	
	Here we study once more the special case $\perm_\pm(x,\omega)=\perm_\pm(\omega)$ for $\omega \in D(\perm)\setminus \Omega_0$, where $\Omega_0$ is defined in \eqref{Omega0}.
	Recall the sets $N^{(v)}_k$ and $N^{(w)}_k$ defined in \eqref{E:Nk0v} and \eqref{E:Nk0w} (each with $k_0$ replaced by $k$), respectively, and $N^{(v)}$ given by \eqref{E:Nv}.
	
	\blem\label{L:Weyl-interf-hom}
	We consider the sets 
\beq
\begin{aligned}
\cN^\mathrm{red}&=\left\{\omega\in D(\perm)\setminus \Omega_0: \perm_+(\omega)+\perm_-(\omega)\neq 0, \frac{\omega^2\perm_+^2(\omega)}{\perm_+(\omega)+\perm_-(\omega)}, \frac{\omega^2\perm_-^2(\omega)}{\perm_+(\omega)+\perm_-(\omega)}\notin [0,\infty),\right. \\
&\qquad \left.  \frac{\omega^2\perm_+(\omega)\perm_-(\omega)}{\perm_+(\omega)+\perm_-(\omega)}\in (0,\infty)\right\}
\end{aligned}
\eeq
and
\beq
\begin{aligned}
N_k^\mathrm{red}= &\left\{\omega \in D(\perm)\setminus \Omega_0, |k|^2 -\omega^2\perm_+(\omega), \ |k|^2 -\omega^2\perm_-(\omega)\notin (-\infty,0], \right.\\
&\left. \ \text{and} \ \omega^2\perm_+(\omega)\perm_-(\omega)=|k|^2(\perm_+(\omega)+\perm_-(\omega))\right\}.
\end{aligned}
\eeq
We have
	\begin{align}
		N^{(v)}_k &= N_k^\mathrm{red} \quad \forall k \in \R^{N-1}\setminus \{0\},\label{E:Nvnz}\\
		N^{(v)}_0 &= \emptyset,\label{E:Nv0}\\
		N^{(w)}_k &= \emptyset \quad \forall k \in \R^{N-1}.\label{E:Nw}
	\end{align}
	Moreover, 
	$N^{(v)}=\cN^\mathrm{red} \subset \sigma_{\mathrm{Weyl}}(\cL)\setminus\Omega_0$.
	\elem
	\bpf
	For $\omega \in D(\perm)\setminus \Omega_0$ the homogenous form of \eqref{3} reduces to
	\beq\label{E:v-hommed}
	-v''+(|k|^2-\omega^2\perm)v=0.
	\eeq
	For $|k|^2-\omega^2\perm_+(\omega)\in (-\infty,0]$, equation \eqref{E:v-hommed} on $\R_+$ has only the trivial $L^2(\R_+)$ solution, i.e. \eqref{E:L2-cond-v} fails. The same argument applies for $|k|^2-\omega^2\perm_-(\omega)\in (-\infty,0]$.
	
	Assume $|k|^2-\omega^2\perm_\pm(\omega)\not\in (-\infty,0]$. Then $v^{(1)}_-(x_1)=c_-e^{\mu_-x_1}$ and $v^{(2)}_+(x_1)=c_+e^{-\mu_+ x_1}$, with $\mu_\pm :=\sqrt{|k|^2-\omega^2\perm_\pm(\omega)}$ and $c_\pm \in \C$. Hence, condition \eqref{E:L2-cond-v} is satisfied. Equation \eqref{E:det-v-eq} (for $k\neq 0$) is equivalent to
	$$\perm_+(\omega)\mu_-+\perm_-(\omega)\mu_+=0,$$
	which is equivalent to
	$$\omega^2 \perm_+(\omega)\perm_-(\omega) = |k|^2(\perm_+(\omega)+\perm_-(\omega)),$$
	see Remark 4.6 in \cite{BDPW24}. This shows \eqref{E:Nvnz}.
	
	
	Next, we show $N^{(v)}\subset \sigma_{\mathrm{Weyl}}(\cL)\setminus\Omega_0$. Once again, 
	$$N^{(v)}=\cup_{k\in \R^{N-1}}(N^{(v)}_k\setminus\Omega_0)\subset \sigma_{\mathrm{Weyl}}(\cL)\setminus (\Omega\cup\Omega_0)\subset \sigma_{\mathrm{Weyl}}(\cL)\setminus\Omega_0,$$
	where the first inclusion follows by Theorem \ref{T:Weyl-sp}.

	To prove \eqref{E:Nv0}, note that equation \eqref{E:det-v-eq} (for $k=0$) is $\mu_++\mu_-=0$, which cannot hold since $\Real \mu_\pm >0$ by the definition of $\sqrt{z},$ for  $z \in \C\setminus (-\infty,0].$
	
	The proof of \eqref{E:Nw} is completely analogous to \eqref{E:Nv0} because \eqref{4} has the same form as \eqref{E:v-hommed} with $k=0$  and \eqref{E:det-w-eq} has the same form as \eqref{E:det-v-eq} with $k=0$.

	The equality $N^{(v)}=\cN^\mathrm{red}$ follows from \eqref{E:Nvnz}, \eqref{E:Nred-eq} and the obvious fact that $N_0^\mathrm{red}=\emptyset$.
	The inclusion $N^{(v)}\subset \sigma_{\mathrm{Weyl}}(\cL)$ follows from Theorem \ref{T:Weyl-sp}.
	\epf

	
	\section{Non-Existence of Eigenvalues}
	
	Clearly, all gradient fields are in the kernel of the curl operator. Hence, if $\perm(\cdot,\omega)$ vanishes on an open set $B\subset \R^N$, then gradient fields $\nabla f$ with a smooth $f$ and $\supp(f)\subset B$ satisfy $L(\omega)\nabla f =0$ and $\nabla f  \in \cD_\omega$. This means that such $\omega$ is an eigenvalue of infinite geometric multiplicity. 
	
	However, outside $\Omega$, we expect no eigenvalues due to the shift invariance of the problem along any direction in the $x_\parallel-$variables. In other words, there is no mechanism for a full localization in $\R^N$. Hence, we expect $\sigma_p(\cL)\cap (\C\setminus \Omega)=\emptyset.$ This statement remains a conjecture and will be addressed in a future paper.
	Here we only show the non-existence of eigenvalues with finite geometric multiplicity.
    
	\bthm\label{T:no-evals-fin-multip}
	There are no eigenvalues of $\cL$ of finite geometric multiplicity.
	\ethm
	\bpf
	Let $u\in \cD_\omega$ be a solution of $L(\omega)u=0$ for some $\omega \in D(\perm)$. Then, due to the shift invariance of $\cL$ in $x_\parallel$, also $u_\xi(x):=u(x+(0,\xi)), x \in \R^N,$ satisfies $L(\omega)u_\xi=0$ for any $\xi\in \R^{N-1}$.
	
	The following proof of the fact that $E:=\{u_\xi: \xi\in \R^{N-1}\}\subset \cD_\omega$ is infinite dimensional can be found also in \cite{Eastham1973}, see Theorem 6.10.1. Let us assume that $\dim E<\infty$. For a fixed $\zeta \in \R^{N-1}$ the operator $S:f\mapsto f_\zeta$ is linear and satisfies $S:E\to E$. Hence, $S$ has an eigenvalue $\lambda$ and an eigenfunction $\varphi\in E$, i.e., $\varphi(x+(0,\zeta))=\lambda \varphi(x)$ for all $x\in \R^N$.
	
	As $$\|\varphi(\cdot+(0,\zeta))\|_{L^2(\R^N)^3}=\|\varphi\|_{L^2(\R^N)^3},$$
	and $S$ is symmetric, it follows that $\lambda\in \{-1,1\}$. We obtain $\varphi(x+(0,\zeta))=\varphi(x)$ for all $x\in \R^N$, i.e., $\varphi$ is periodic or anti-periodic. This is a contradiction to $\varphi\in L^2(\R^N)^3$.
	\epf
	
	\subsection{Media Homogeneous in $\R^N_\pm$}
	Let us first recall that
	$$ \Omega_0=\{\omega \in D(\perm): \omega^2\perm_+(\omega)=0 \quad \text{or} \quad \omega^2\perm_-(\omega)= 0\}.$$

	\blem\label{L:pt-spec-hom}
	Assume $\perm_\pm(\cdot,\omega) =\perm_\pm(\omega)$. Then $\sigma_p(\cL)\setminus \Omega_0=\emptyset.$
	\elem
	\bpf
	Let $\omega \in \sigma_p(\cL)\setminus\Omega_0$ and $(u_1,v,w)$ be the Fourier transform (in the variables $x_\parallel$) of a corresponding eigenfunction. Note that $(u_1,v,w)(k)\in L^2(\R)^3$ for almost all $k \in \R^{N-1}$ and ${\rm meas}(\supp(u_1,v,w))\neq 0$. Recall from Lemma \ref{lem:equiv}  that $u_1=-\frac{\ri|k|}{|k|^2-\omega^2\perm}v'$ and for homogeneous $\perm_\pm$ the components  $v(k)$ and $w(k)$ satisfy the same type of equations $-\varphi'' +(|k|^2-\omega^2\perm_\pm)\varphi =0$ on $\R_\pm$. As explained in the proof of Lemma \ref{L:Weyl-interf-hom}, the condition $\llbracket w\rrbracket=\llbracket w'\rrbracket=0$ and condition \eqref{E:det-w-eq} together with $w\in L^2(\R)$, imply $w=0$.
	
	For the $v$-component we saw in the proof of Lemma \ref{L:Weyl-interf-hom} that
	$$v(x_1,k)=\begin{cases}
		c_+(k)e^{-\mu_+(k)x_1}, & x_1>0,\\
		c_-(k)e^{\mu_-(k)x_1}, & x_1<0
	\end{cases}$$
	with $\Real \mu_\pm >0$ and $c_\pm:\R^{N-1}\to \C.$ The interface conditions for $v$ and $u_1$ are equivalent to
	\beq\label{E:DR}
	\omega^2 \perm_+(\omega)\perm_-(\omega) =|k|^2(\perm_+(\omega) +\perm_-(\omega))
	\eeq
	as shown in the introduction to Sec. \ref{S:Weyl-interf} and the proof of Lemma \ref{L:Weyl-interf-hom}. Equation \eqref{E:DR} is satisfied at almost every $k\in \supp (c_+)\cup \supp (c_-)$. However, because for $\omega \notin\Omega_0$ equation \eqref{E:DR}  holds only for two values of $k\in\R$ if $N=2$ and for $k$ on a circle in $\R^2$ if $N=3$, i.e., on a set of measure zero, we get a contradiction with $(u_1,v,w)$ being non-null.
	\epf
	%
%
%
\appendix
\section{Integration by Parts for  $\nabla\times $}

The following two results are possibly well known but for readers' convenience we provide them including the proofs.
\blem\label{L:PI-3D}
Let $N=2,3$. Then
$$\int_{\R^N} (\nabla \times u)\cdot \overline{v} \dd x = \int_{\R^N} u\cdot \overline{(\nabla \times v)} \dd x \quad \text{  for all } \quad u,v\in H(\Curl,\R^N):=\{f\in L^2(\R^N)^3: \curl f \in L^2(\R^N)^3\},$$
where we recall that $\nabla = (\pa_{x_1},\pa_{x_2},\pa_{x_3})^\top$ if $N=3$ and $\nabla = (\pa_{x_1},\pa_{x_2},0)^\top$ if $N=2$.
\elem
\bpf
For any open Lipschitz set $\Omega \subset \R^N$ we define
$$\Curl_0: H_0(\Curl,\Omega)\subset L^2(\Omega)^3\to L^2(\Omega)^3, u\mapsto \nabla \times u,$$
where  $$H_0(\Curl,\Omega):=\overline{C^\infty_c(\Omega)^3}^{H(\Curl,\Omega)},$$
i.e. with the closure with respect to the graph norm of $\Curl$. Then one has $\Curl = \Curl_0^*$ and hence, automatically,
$$\langle \Curl_0 u, v\rangle_{L^2(\Omega)}=\langle u, \Curl v\rangle_{L^2(\Omega)} \quad \forall u \in H_0(\Curl,\Omega), v\in H(\Curl,\Omega).$$
Lemma \ref{H0curl-fullspace} shows that $H_0(\Curl,\R^N)=H(\Curl,\R^N)$ and thus the statement follows.
\epf

\blem\label{H0curl-fullspace}
Let $N=2,3$. Then
$$H_0(\Curl,\R^N)=H(\Curl,\R^N).$$
\elem
\bpf
The proof is analogous to that for showing $H^1_0(\R^N)=H^1(\R^N)$.
Clearly, $H_0(\Curl,\R^N)\subset H(\Curl,\R^N).$ For the other direction let $u\in H(\Curl,\R^N)$ be given. For any $n\in \N$ we define
$$u^{(n)}:=(\eta_{\frac{1}{n}}*u)\theta_n.$$
Here $\eta_{\frac{1}{n}}:= n^N \eta(n\cdot),$ $\eta \in C^\infty_c(B_1(0)), \int_{\R^N}\eta(x)\dd x =1$ generates a Dirac sequence $(\eta_{\frac{1}{n}})_n$ and $\theta_n(x):=\theta(\tfrac{|x|}{n})$ is a cut-off function with $\theta\in C^\infty_c(\R)$, $\theta(s)=1$ for $|s|\leq 1,$ and $\theta(s)=0$  for $|s|\geq 2$. Because $(u^{(n)})_n \subset C^\infty_c(\R^N)$, it remains to prove that $u^{(n)} \to u$ in $H(\Curl,\R^N)$. We have
\beq\label{eq:conv}
\|u^{(n)}-u\|_{L^2}\leq \|(\eta_{\frac{1}{n}}*u-u)\theta_n\|_{L^2}+\|\theta_nu-u\|_{L^2}\leq \|\eta_{\frac{1}{n}}*u-u\|_{L^2}+\|\theta_nu-u\|_{L^2} \to 0 \quad (n\to \infty),
\eeq
where the convergence of the first term is shown, e.g., in Theorem 2.29 of \cite{AF2003} and the convergence of the second term is obvious.
For $N=2$, to show $\|\nabla\times u^{(n)}-\nabla\times u\|_{L^2(\R^2)}\to 0$  it suffices to show that
$\|\pa_2 u_3^{(n)}-\pa_2 u_3\|_{L^2(\R^2)}\to 0$ and similarly for $\pa_1u_3$ and $\pa_2u_1-\pa_1u_2$. For $N=3$ we need to show $\|\pa_2 u_3^{(n)}-\pa_3 u_2^{(n)}-(\pa_2 u_3-\pa_3 u_2)\|_{L^2(\R^3)}\to 0$ and similarly for $\pa_3u_1-\pa_1u_3$ and $\pa_2u_1-\pa_1u_2$. 

Let us show in detail $\|\pa_2 u_3^{(n)}-\pa_3 u_2^{(n)}-(\pa_2 u_3-\pa_3 u_2)\|_{L^2(\R^3)}\to 0$. All other convergences are analogous.
Using $\pa_j u^{(n)} = (\eta_{\frac{1}{n}}*u)\pa_j\theta_n+(\eta_{\frac{1}{n}}*\pa_j u)\theta_n$, this follows from
$$
\begin{aligned}
	\|\pa_2 u_3^{(n)}&-\pa_3 u_2^{(n)}-(\pa_2 u_3-\pa_3 u_2)\|_{L^2} \\
	&\leq \left\|(\eta_{\frac{1}{n}}*u_3)\pa_2\theta_n\right\|_{L^2}+\left\| (\eta_{\frac{1}{n}}*u_2)\pa_3\theta_n\right\|_{L^2}+\left\|\left(\eta_{\frac{1}{n}}*(\pa_2 u_3-\pa_3 u_2)-(\pa_2 u_3-\pa_3 u_2)\right)\theta_n\right\|_{L^2}\\
	&\quad + \left\|(\pa_2 u_3-\pa_3 u_2)\theta_n-(\pa_2 u_3-\pa_3 u_2)\right\|_{L^2}\\
	& \leq \frac{c}{n}\sum_{j=2}^3\|\eta_{\frac{1}{n}}*u_j\|_{L^2}+\left\|\eta_{\frac{1}{n}}*(\pa_2 u_3-\pa_3 u_2)-(\pa_2 u_3-\pa_3 u_2)\right\|_{L^2}+ \left\|(\pa_2 u_3-\pa_3 u_2)\theta_n-(\pa_2 u_3-\pa_3 u_2)\right\|_{L^2}\\
	& \to 0,
\end{aligned}
$$
where the convergence of the last two terms is analogous to \eqref{eq:conv}.
\epf

\section{Trace Operators}\label{S:traces}

The interface conditions \eqref{E:IFC} and \eqref{E:IFC-u} were formulated using trace operators, which we define below. First, for an open set $\Omega \subset \R^N$  we use the standard notation
	$$\Hdiv(\Omega):=\{E \in L^2(\Omega)^3: \nabla \cdot E \in L^2(\Omega)\}, \quad \Hcurl(\Omega):=\{E \in L^2(\Omega)^3: \nabla \times E \in L^2(\Omega)^3\},$$
where we recall that $\nabla=(\pa_{x_1},\pa_{x_2},0)^\top$ if $N=2$ and $\nabla=(\pa_{x_1},\pa_{x_2},\pa_{x_3})^\top$ if $N=3$. We also define $\Gamma := \{x\in \R^N:x_1=0\}$ and $\nu:= \rm{e}_1\in\R^N$, i.e., the unit normal on $\Gamma$ pointing outward from $\R^N_-$. The classical trace operator
$$T_0:H^1(\R^N)^m \to H^{1/2}(\Gamma)^m \quad (\text{such that  } T_0f=f|_\Gamma \text{ for } f\in C^1(\R^N)^m) $$
with $m \in \N$ is used below. It is known that $T_0$ is continuous and surjective, see Theorem 7.39 and Sec.~7.67 in \cite{AF2003}.  

The trace operators $T_\pm^n$ and $T_\pm^t$ in $\R^2_\pm$ used in \eqref{E:IFC} were defined in \cite{BDPW24}. The definition in $\R^3_\pm$, i.e. for $N=3$, is completely analogous. In summary, we have for $N=2,3$
$$
T^n_\pm: \Hdiv(\R^N_\pm) \to H^{-1/2}(\Gamma),  T^n_\pm E[\varphi]:= \int_{\R^N_\pm} E \cdot \nabla \tilde{\varphi} \dd x + \int_{\R^N_\pm} \tilde{\varphi} \nabla \cdot E  \dd x \qquad \forall \varphi \in H^{1/2}(\Gamma),
$$
where $\tilde{\varphi} \in H^1(\R^N)$ is such that $T_0 \tilde{\varphi}= \varphi$. The space $H^{-1/2}(\Gamma)$ is the dual space of $H^{1/2}(\Gamma)$. 

Similarly,
$$
T^t_\pm: \Hcurl(\R^N_\pm) \to (H^{1/2}(\Gamma)^3)',  T^t_\pm E[\Phi]:= \int_{\R^N_\pm} E \cdot \nabla \times \tilde{\Phi} \dd x - \int_{\R^N_\pm} \tilde{\Phi}\cdot \nabla \times E  \dd x \qquad \forall \Phi \in  H^{1/2}(\Gamma)^3,
$$
where $\tilde{\Phi} \in H^1(\R^N)^3$ is such that $T_0 \tilde{\Phi} = \Phi$.

\medskip
\noindent\textbf{Trace Operators and the Fourier Transform}

\noindent The trace operators $\widehat{T_\pm^n}$ in $\R^N_\pm$ used in \eqref{E:IFC-u} are defined via 
$$
\widehat{T^n_\pm}: \cF_t\Hdiv(\R^N_\pm) \to \cF_t H^{-1/2}(\Gamma),  u\mapsto \widehat{T^n_\pm}u:=\cF_t(T^n_\pm E),
$$
where $u=\cF_t E$ with $E\in \Hdiv(\R^N_\pm)$. This definition clearly satisfies
$$
\widehat{T^n_\pm}(\cF_t E) = \cF_t (T_\pm^n E) \qquad \forall E\in \Hdiv(\R^N_\pm).
$$

For $E \in C^1(\overline{\R^N_\pm})\cap L^2(\R^N_\pm)$ we have $\widehat{T^n_\pm} u (k) = \mp \nu\cdot u(0,k)=\mp u_1(0,k)$ for all $k\in \R^{N-1}$, where again $u = \cF_t E$. This follows from the fact that $T^n_\pm E = \mp E_1|_\Gamma$ (shown by the divergence theorem, see \cite{BDPW24}) and from $\cF_t(E_1|_\Gamma)(k)=u_1(0,k)$.

Note that elements of $ \cF_t H^{-1/2}(\Gamma)$ act on test functions $\varphi \in \cF_t^{-1}H^{1/2}(\Gamma)$. For $\varphi =\cF_t^{-1}\psi$ with $\psi \in H^{1/2}(\Gamma)$ we have
$$\widehat{T^n_\pm}u[\varphi] = \cF_t(T^n_\pm E)[\cF_t^{-1}\psi]= T_\pm^nE[\psi].$$

Similarly, $\widehat{T_\pm^t}$ are defined via 
$$
\widehat{T^t_\pm}: \cF_t\Hcurl(\R^N_\pm) \to (\cF_t H^{1/2}(\Gamma)^3)',  u\mapsto \widehat{T^t_\pm}u:=\cF_t(T^t_\pm E),
$$
where $u=\cF_t E$ with $E\in \Hcurl(\R^N_\pm)$. We have
$$
\widehat{T^t_\pm}(\cF_t E) = \cF_t (T_\pm^t E) \qquad \forall E\in \Hcurl(\R^N_\pm)
$$
and for $E \in C^1(\overline{\R^N_\pm})\cap L^2(\R^N_\pm)$ we get $\widehat{T^t_\pm} u (k) = \mp \nu \times u(0,k)= \mp (-u_2(0,k),u_1(0,k),0)^\top$ for all $k\in \R^{N-1}$, where again $u = \cF_t E$.

Elements of $ (\cF_t H^{1/2}(\Gamma)^3)'$ act on test functions $\Phi \in \cF_t^{-1}H^{1/2}(\Gamma)^3$. For $\Phi =\cF_t^{-1}\Psi$ with $\Psi \in H^{1/2}(\Gamma)^3$ we have
$$\widehat{T^t_\pm}u[\Phi] = \cF_t(T^t_\pm E)[\cF_t^{-1}\Psi]= T_\pm^tE[\Psi].$$

\section*{Acknowledgements}
Michael Plum acknowledges funding by the DFG, Project-ID 258734477 - SFB 1173. All authors acknowledge
funding from the EPSRC projects EP/W007037/1 and EP/W006553/1 Spectral properties of interface problems for Maxwell systems. For the purpose of open access, the authors have applied a CC BY public copyright licence (where permitted by UKRI, an Open Government Licence or CC BY ND public copyright licence may be used instead) to any Author Accepted Manuscript version arising.

\bibliographystyle{plain}
\bibliography{bibliography}

\end{document}